\newcolumntype{L}{>{\raggedright\arraybackslash}p{6cm}} 
\newcolumntype{R}{>{\raggedleft\arraybackslash}p{8cm}}  
\newcolumntype{B}{|>{\bfseries\arraybackslash}p{3cm}|}  
\title{Particle method for the numerical simulation of the path-dependent McKean-Vlasov equation}
\newcommand{\footremember}[2]{
   \footnote{#2}
    \newcounter{#1}
    \setcounter{#1}{\value{footnote}}
}
\newcommand{\leqnomode}{\tagsleft@true}
\newcommand{\reqnomode}{\tagsleft@false}
\author{%
Armand Bernou\footremember{a}{\small University Lyon 1, ISFA, LSAF (EA 2429), Lyon, France. E-mail address: armand.bernou@univ-lyon1.fr} 
  \and Yating Liu \footremember{b}{\small CEREMADE, CNRS, UMR 7534, Universit\'e  Paris-Dauphine, PSL University, 75016 Paris, France. E-mail address: liu@ceremade.dauphine.fr}%
}
\numberwithin{equation}{section}
\newtheorem{thm}{Theorem}[section]
\newtheorem{lem}[thm]{Lemma}
\newtheorem{prop}[thm]{Proposition}
\newtheorem{cor}[thm]{Corollary}
\newtheorem{defn}[thm]{Definition}
\theoremstyle{remark}
\newtheorem{rem}[thm]{Remark}
\newcommand{\widesim}[2][1.5]{
  \mathrel{\overset{#2}{\scalebox{#1}[1]{$\sim$}}}
}
\newcommand{\vertiii}[1]{{\left\vert\kern-0.25ex\left\vert\kern-0.25ex\left\vert #1 
    \right\vert\kern-0.25ex\right\vert\kern-0.25ex\right\vert}}
\newcommand{\vertii}[1]{\left\Vert #1\right\Vert}
\newcommand\independent{\protect\mathpalette{\protect\independenT}{\perp}}
\def\independenT#1#2{\mathrel{\rlap{$#1#2$}\mkern2mu{#1#2}}}
\newcommand{\PPC}{\mathcal{P}_{p}\big(\mathcal{C}([0, T], \mathbb{R}^{d})\big)}
\newcommand{\CPP}{\mathcal{C}\big([0, T], \mathcal{P}_{p}(\mathbb{R}^{d})\big)}
\newcommand{\CRD}{\mathcal{C}\big([0, T], \mathbb{R}^{d}\big)}
\newcommand{\PPRD}{\mathcal{P}_{p}(\mathbb{R}^{d})}
\newcommand{\R}{\mathbb{R}}
\newcommand{\PP}{\mathbb{P}}
\newcommand{\RD}{\mathbb{R}^{d}}
\newcommand{\RR}{\mathbb{R}}
\newcommand{\EE}{\mathbb{E}\,}
\newcommand{\calP}{\mathcal{P}}
\renewcommand{\sp}{{\footnotesize \hbox{sup}}}
\newcommand{\cws}{{\cdot \wedge s}}
\renewcommand{\d}{\mathrm{d}}
\newcommand{\dd}{\color{black}}
\newcommand{\llN}{\llbracket N \rrbracket}
\newcommand{\llM}{\llbracket M \rrbracket}
\begin{document}
\maketitle 

\begin{abstract}
We present the particle method for simulating the solution to the path-dependent McKean-Vlasov equation,  in which both the drift and the diffusion coefficients depend on the whole trajectory of the process up to the current time $t$, as well as on the corresponding marginal distributions.  Our paper establishes an explicit convergence rate for this numerical approach. We illustrate our findings with numerical simulations of a modified Ornstein-Uhlenbeck process with memory, and of an extension of the Jansen-Rit mean-field model for neural masses.  
\end{abstract}


\noindent\textbf{Keyword:} path-dependent McKean-Vlasov equation, propagation of chaos, interpolated Euler scheme, particle method, convergence rate of numerical method.


\section{Introduction}

We consider a filtered probability space $(\Omega,\mathcal{F}, (\mathcal{F}_t)_{t \ge 0}, \mathbb{P})$ satisfying the usual condition\footnote{The usual condition means that $\mathcal{F}_0$ contains all $\mathbb{P}$-null sets and the filtration is right-continuous, i.e., $\mathcal{F}_t=\mathcal{F}_{t+}\coloneqq \cap_{s>t}\mathcal{F}_s$.} and  an $(\mathcal{F}_t)$-standard Brownian motion $(B_t)_{t\ge 0}$ valued in $\RR^{q}$, $q \in \mathbb{N}^*$. Let $T > 0$ be the fixed  time horizon and let $\mathbb{M}_{d,q}(\R)$ denote the space of matrices of size $d \times q$, $d \in \mathbb{N}^*$, equipped with the operator norm $\vertiii{\cdot}$ defined by $\vertiii{A} := \sup_{z \in \R^q, |z| \le 1} \big|Az\big|$. We write $\mathcal{C}([0,T],S)$ for the set of continuous maps from $[0,T]$ to some Polish space $S$ endowed with the distance $d_S$, and, for $p \ge 1$, we write $\calP_p(S)$ for the set of probability distributions on $S$ admitting a finite moment of order $p$ equipped with the Wasserstein distance (see \eqref{eq:def_Wasserstein} below). Moreover, 
for $\alpha=(\alpha_t)_{t\in[0,T]}\in\CRD$, $(\nu_t)_{t\in[0,T]}\in\CPP$ and for a fixed $t_0\in[0,T]$, we define $\alpha_{\cdot \wedge t_0}=(\alpha_{t \wedge t_0})_{t\in[0,T]}$  and $\nu_{\cdot \wedge t_0}=(\nu_{t \wedge t_0})_{t\in[0,T]}$  by  
\begin{equation}\label{infprocess}
\alpha_{t \wedge t_0}\: \coloneqq \:\begin{cases}\alpha_t\quad \:\,\text{if}\quad t   \in[0,t_0], \\ \alpha_{t_0}\quad \text{if}\quad  t\in(t_0, T],\end{cases}\quad \text{and}\quad \nu_{t \wedge t_0}\: \coloneqq \:\begin{cases}\nu_t\quad \:\,\text{if}\quad t   \in[0,t_0], \\ \nu_{t_0}\quad \text{if}\quad   t\in(t_0, T].\end{cases}
\end{equation}
It is obvious that $\alpha_{\cdot \wedge t_0}\in\CRD$ and $\nu_{\cdot \wedge t_0}\in\CPP$.

In this paper, we consider the following path-dependent McKean-Vlasov equation 
\begin{equation}\label{eq:path-dependent_McKean}
X_t=X_0+\int_{0}^{t}\,b(s, X_{\cdot\wedge s}, \mu_{\cdot\wedge s}) \, \d s+\int_{0}^{t}\,\sigma(s, X_{\cdot\wedge s}, \mu_{\cdot\wedge s})\, \d B_s, \qquad t \ge 0,
\end{equation}
where $X_0:  (\Omega, \mathcal{F}, \mathbb{P})\rightarrow \big(\RD, \mathcal{B}(\RD)\big)$  is a random variable independent of $(B_t)_{t\in[0,T]}$,  the coefficient functions $b$ and $\sigma$  are measurable functions defined on $[0, T]\times \mathcal{C}\big([0, T], \RD\big)\times \mathcal{C}\big([0, T], \mathcal{P}_p(\RD)\big)$ and respectively valued in $\RD$ and in $\mathbb{M}_{d, q}(\RR)$, and $\mu_{\cdot \wedge t}$ denotes the marginal distributions of the process $X_{\cdot \wedge t}$, that is, for every $s\in[0,T]$, $\mu_{s \wedge t}=\mathbb{P}\circ X_{s \wedge t}^{-1}$ .

In \eqref{eq:path-dependent_McKean}, the arguments $X_{\cdot \wedge t}$ and $\mu_{\cdot \wedge t}$ in the coefficients $b$ and $\sigma$ keep track of the whole trajectory of $X_\cdot$ and its marginal distribution $\mu_\cdot$ between $0$ and $t > 0$, which can be seen as the generalization of the standard McKean-Vlasov equation 
\begin{align}\label{eq:McKean_standard}
X_t = X_0 + \int_0^t b(s, X_s, \mu_s)\d s+ \int_0^t \sigma(s, X_s, \mu_s)\d B_s
\end{align}
first introduced by McKean in \cite{mckean1967propagation} as a stochastic
model naturally associated to a class of non-linear PDEs. See also \cite{Sznitman_1991, Chaintron_2022a, Chaintron_2022b} for a systematic presentation of the  standard McKean-Vlasov equation, including the notion of propagation of chaos. 

This paper aims to study the convergence rate of a numerical method for simulating the solution to \eqref{eq:path-dependent_McKean}. The construction of the numerical scheme comprises two essential components: temporal discretization over the interval $[0, T]$ by using an interpolated Euler scheme, and spatial discretization across $\mathbb{R}^d$ using a discrete particle system. The purpose of these discretizations is to ensure that, at each step, we only need to consider discrete inputs.

\noindent\textit{(a) Temporal discretization by an interpolated Euler scheme.}

In the following definition, $M \in \mathbb{N}^*$ should be thought of as the temporal discretization number, while $h\coloneqq \frac{T}{M}$ is the time step. For every $m = 0,\dots, M$, we set $t_m = m h$.  To simplify the notations, we will write $x_{0:m}\coloneqq(x_0, ..., x_m)$, $\mu_{0:m}\coloneqq(\mu_0, ..., \mu_m)$. Our interpolated Euler scheme uses the following interpolator. 
\begin{defn}[Interpolator]\label{definterpolator}
	\begin{enumerate}[$(a)$]
		\item For every $m=1, \dots, M$, we define a piecewise affine interpolator $i_m$ on $m+1$ points in $\RD$ by 
		\begin{equation}\label{definterp1}
			x_{0:m}\in(\RD)^{m+1}\longmapsto i_m(x_{0:m})=(\bar x_t)_{t\in[0,T]}\in\CRD,
		\end{equation}
		where for every $t\in[0,T]$, $\bar x_t$ is defined by 
		\begin{align}
			&\forall \,k=0, ..., m-1, \quad \forall \,t\in[t_k, t_{k+1}),\quad \bar x_t =\frac{1}{h}(t_{k+1}-t)\,  x_k+\frac{1}{h}(t-t_k) \, x_{k+1},\nonumber\\
			&\forall \,t\in[t_m, T], \quad \bar x_t=x_m.\nonumber
		\end{align}
		By convention, we define, for every $t \in [0,T]$, $i_0(x_0)_t := x_0$. 
	\item Let $p\geq 1$. For every $m=1, ..., M$, we define a piecewise affine interpolator for $m+1$ probability measures in $\PPRD$, still denoted by $i_m$ with a slight abuse of notation, by
\begin{equation}\label{definterp2}
	\mu_{0:m}\in(\PPRD)^{m+1}\longmapsto i_m(\mu_{0:m})=(\bar \mu_t)_{t\in[0,T]}\in\CPP,
\end{equation}
where for every $t\in[0,T]$, $\bar \mu_t$ is defined by 
\begin{align}\label{definterp2bis}
	&\forall \,k=0, ..., m-1, \quad \forall \,t\in[t_k, t_{k+1}),\quad \bar \mu_t =\frac{1}{h}(t_{k+1}-t)\, \mu_k+\frac{1}{h}(t-t_k) \, \mu_{k+1},\nonumber\\
	&\forall \,t\in[t_m, T], \quad \bar \mu_t=\mu_m.
\end{align}
\end{enumerate}
By convention, we define, for every $t \in [0,T]$, $i_0(\mu_0)_t := \mu_0$. 
\end{defn}

\smallskip
The well-posedness of the interpolator $i_m$ is proved in Lemma \ref{combcovprop} below. With this at hand, we define our interpolated Euler scheme in which we use the short-hand notation $Y_{t_0:t_m}$ (\textit{respectively}, $\nu_{t_0:t_m}$) to denote 
$(Y_{t_0}, \dots, Y_{t_m})$ (\textit{resp.} $(\nu_{t_0}, \dots, \nu_{t_{m}})$).

\smallskip
\begin{defn}
	\label{def:discretization_scheme}
	Let $M\in\mathbb{N}^{*}$,  $h=\frac{T}{M}$. For every $m=0,..., M$, we set $t_m=m h$. For the same Brownian motion $(B_t)_{t \in [0,T]}$ and random vector $X_0$ as in \eqref{eq:path-dependent_McKean}, the interpolated scheme $(\widetilde X_{t_m}^h)_{0\leq m\leq M}$ of the path-dependent McKean-Vlasov equation \eqref{eq:path-dependent_McKean} is defined as follows :
\begin{enumerate}
\item[1.] $\widetilde X^h_0 = X_0$; 
\item[2.] for all $m \in \{0, \dots, M-1\}$, 
\begin{equation}\label{eq:discretescheme}
\widetilde X_{t_{m+1}}^h = \widetilde X_{t_m}^h + h \, b_m(t_m, \widetilde X_{t_0:t_m}^h, \widetilde \mu_{t_0:t_m}^h) + \sqrt{h} \, \sigma_m (t_m, \widetilde X_{t_0:t_m}^h, \widetilde \mu_{t_0:t_m}^h) Z_{m+1}, 
\end{equation}
\end{enumerate}
where, for $k \in \{0, \dots, M\}$, $\widetilde \mu^h_{t_k}$ is the probability distribution of $\widetilde X^h_{t_k}$, where, for $m = 0, \dots, M-1$, $Z_{m+1} = \frac{1}{\sqrt{h}} (B_{t_{m + 1}} - B_{t_m})\widesim{\,\mathrm{i.i.d.}\,}\mathcal{N}(0,\mathrm{I}_q)$ , and where 
the applications $b_m, \sigma_m$ are defined on $[0,T]\times (\RD)^{m+1}\times \big(\PPRD\big)^{m+1}$ and respectively valued in $\RD$ and $\mathbb{M}_{d,q}(\RR)$, with
\begin{align}\label{defbmsigmam}
&\forall\, t\in [0, T], \;x_{0:m}\in(\RD)^{m+1},\; \mu_{0:m}\in \big(\PPRD\big)^{m+1}, \nonumber\\
&\: b_m(t, x_{0:m}, \mu_{0:m})\!\coloneqq b\big(t, i_m(x_{0:m}), i_m(\mu_{0:m})\big), \sigma_m(t, x_{0:m}, \mu_{0:m})\!\coloneqq \sigma\big(t, i_m(x_{0:m}), i_m(\mu_{0:m})\big).
\end{align}
Moreover, we also define the continuous extension process $(\widetilde X^{h}_t)_{t \in [0,T]}$ from \eqref{eq:discretescheme} by setting, for all $t \in (t_m, t_{m+1}]$,
\begin{align}\label{eq:def_continuous_2} 
\widetilde X^{h}_t &= \widetilde X^{h}_{t_m} + (t-t_m) \, b_m(t_m, \widetilde X^{h}_{t_0:t_m}, \widetilde \mu^{h}_{t_0:t_m})   +\,  \sigma_m \, (t_m, \widetilde X^{h}_{t_0:t_m}, \widetilde \mu^{h}_{t_0:t_m})(B_t - B_{t_m}). 
\end{align}
\end{defn}

\begin{rem}
The applications $b_m$ and $\sigma_m$ defined in \eqref{defbmsigmam} process discrete inputs, often facilitating computations from a numerical perspective. For instance, if
\begin{equation}
	\label{eq:b_integral}
	b\big(t, (X_{s})_{s\in[0,T]}, (\mu_s)_{s\in[0,T]}\big)\coloneqq\int_{0}^{t} \EE [\phi (X_s)] \d s  = \int_0^t \left(\int_{\RD}\phi(x)\mu_s(\d x)\right)\d s 
\end{equation} 
with a bounded function $\phi$, then,  by definition of $b_m$,   
\begin{equation}
	\label{eq:b_m_sum}
	b_m(t_m, \widetilde{X}^h_{t_{0}:t_{m}}, \widetilde{\mu}^h_{t_{0}:t_{m}})=\frac{h}{2}\Big(\EE \big[\phi (\widetilde{X}^h_{t_{0}}) \big]+\EE \big[\phi (\widetilde{X}^h_{t_{m}}) \big]\Big)+h \sum_{k=1}^{m-1}\EE\big[\phi (\widetilde{X}^h_{t_{k}})\big].
\end{equation}
Clearly the numerical computation of an integral quantity, as in \eqref{eq:b_integral}, is  more demanding than the handling of sums, as in \eqref{eq:b_m_sum}.
\end{rem}
\smallskip
\noindent\textit{(b) Spatial discretization by a particle system.}

The scheme defined in \eqref{eq:discretescheme} is not directly implementable due to the term $\widetilde \mu_{t_0:t_m}^h$ in the coefficient functions. To overcome this limitation, we enhance \eqref{eq:discretescheme} by incorporating a particle system,  in the spirit of \cite{Talay_1996, Bossy_1997, Antonelli_2002, liu2022particle}, thereby transforming it into a numerically implementable scheme.
To simplify the notation, for $N \in\mathbb{N}^{*}$, we use $\llbracket N \rrbracket$ to denote the set $\{0,\dots,N\}$ and $\llbracket N \rrbracket^*$ for the set $\{1,\dots,N\}$. 


\begin{defn}[Particle method]\label{def:particle_method}
Let $N \in \mathbb{N}^*$. Consider $N$ standard independent Brownian motions $(B^1_t, ..., B^N_t)_{t\in[0,T]}$. For every  $n \in \llN^*$  and for every $m \in \llbracket M-1 \rrbracket$, let $Z^n_{m+1}$ be given by $Z_{m+1}^n\coloneqq (B_{t_{m+1}}^n-B_{t_{m}}^n)/\sqrt{h}$. We define a discrete $N$-particle system $(\widetilde{X}_{t_m}^{1, N,h}, ..., \widetilde{X}_{t_m}^{N, N,h})_{\,0\leq m\leq M}$ as follows :  
\begin{align}\label{eq:particlesystem}
&\widetilde{X}_{t_{m+1}}^{n, N, h}  :=\widetilde{X}_{t_{m}}^{n, N,h}\!+h \, b_m\Big(t_m,\, \widetilde{X}_{t_0 : t_m}^{n, N,h}, \,\widetilde{\mu}_{t_0 : t_m}^{N,h}\Big)\!+\!\sqrt{h}\,\sigma_m\Big(t_m, \,\widetilde{X}_{t_0 : t_m}^{n, N,h}, \,\widetilde{\mu}_{t_0 : t_m}^{N,h}\Big)Z_{m+1}^{n}, 
\end{align}
where $\widetilde{X}_{t_0}^{1, N,h}, ..., \widetilde{X}_{t_0}^{N, N,h}\widesim{\mathrm{i.i.d}}X_0$ and 
 for every $m \in \llM$, $\widetilde \mu^{N,h}_{t_m}$ is the associated empirical distribution of the particle system at time $t_m$, i.e. 
 \begin{align}\label{def:empirical_measure}
 \widetilde \mu^{N,h}_{t_m} := \frac1{N} \sum_{n=1}^N \delta_{\widetilde X^{n,N,h}_{t_m}}. \end{align} 
We also define the continuous extension particle system  $(\widetilde X^{1,N,h}_{t}, \dots, \widetilde X^{N,N,h}_{t})_{t \in [0,T]}$ from \eqref{eq:particlesystem} by setting, for all $i \in \llN^*$, $m \in \llbracket M-1 \rrbracket$  and for all $t \in (t_m, t_{m+1}]$, 
\begin{align}\label{eq:contiparticlesystem}
\widetilde X^{i,N,h}_t = \widetilde X^{i,N,h}_{t_m} + (t-t_m) \, b_m\big(t_m, \widetilde X^{i,N,h}_{t_0:t_m}, \widetilde \mu^{N,h}_{t_0:t_m}\big)+ \sigma_m \, \big(t_m, \widetilde X^{i,N,h}_{t_0:t_m}, \widetilde \mu^{N,h}_{t_0:t_m}\big)(B^i_t - B^i_{t_m}). 
\end{align}
\end{defn}


\subsection{Notations, assumptions and main results}

In the whole paper, we use the notation $\mu = \mathbb{P} \circ X^{-1}=\mathcal{L}(X)$ or $X \sim \mu$  to indicate that a random variable $X$ has the distribution $\mu$ and we use $\Vert X\Vert_p$ for the $L^p$-norm of $X$, $p\geq1$.
For a Polish space $(S,d_S)$,  the Wasserstein distance $W_p$ on $\calP_p(S)$ is defined by
\begin{align}
\label{eq:def_Wasserstein}
&W_p(\mu, \nu) := \inf_{\pi \in \Pi(\mu,\nu)} \Big( \int_{S \times S} d_S(x,y)^p \, \pi(\d x, \d y) \Big)^{\tfrac1{p}}  \\
&\quad = \inf \Big\{ \EE \big[d_S(X,Y)^p \big]^{\tfrac1{p}}, \, X,Y: (\Omega, \mathcal{F}, \mathbb{P}) \to (S,\mathcal{S})\: \text{with } \:\mathbb{P} \circ X^{-1} = \mu, \, \mathbb{P} \circ Y^{-1} = \nu \Big\},  \nonumber
\end{align}
where $\Pi(\mu,\nu)$ denotes the set of probability measures on $(S \times S, \mathcal{S}^{\otimes 2})$ with marginals $\mu$ and $\nu$, and $\mathcal{S}$ denotes the Borel $\sigma$-algebra on $S$ generated by the distance $d_S$. We write $\mathcal{W}_p$ for the case $S = \R^d$ and $\mathbb{W}_p$ for the case $S = \mathcal{C}([0,T], \R^d)$ endowed with the supremum norm $\|\alpha\|_{\sp} = \sup_{t \in [0,T]}|\alpha_t|$. We also introduce $\CPP$, the space of probability distributions $(\mu_t)_{t \in [0,T]}$ such that $t\in[0,T] \mapsto \mu_t \in \mathcal{P}_p(\R^d)$ is continuous with respect to the distance $\mathcal{W}_p$.  For $(\mu_t)_{t\in[0,T]}, (\nu_t)_{t\in[0,T]}\in \CPP$, we will repeatedly use $\sup_{t\in[0,T]}\mathcal{W}_p(\mu_t, \nu_t)$ as a distance between these two elements.  
In addition, we define, for $p \ge 1$ and $t \in [0,T]$, the truncated Wasserstein distance $\mathbb{W}_{p,t}$ on $\mathcal{P}_p(\mathcal{C}([0,T], \R^d))$ by 
\begin{align}
\label{eq:def_truncated_Wass}
&\forall\,\mu, \nu\!\in \!\mathcal{P}_p(\mathcal{C}([0,T], \R^d)), \nonumber \\
&\qquad \mathbb{W}_{p,t}(\mu, \nu) \coloneqq \!\!\inf_{\pi \in \Pi(\mu,\nu)} \!\Big[\int_{\mathcal{C}([0,T], \R^d) \times \mathcal{C}([0,T], \R^d)}\sup_{s\in[0,t]}|x_s-y_s|^p \;\pi(\d x, \d y) \Big]^{\tfrac1{p}}.
\end{align}

In this paper, we work with two sets of assumptions, both depending on an index $p \ge 2$. 

\noindent \textbf{Assumption (I).} There exists  $p\geq 2$  such that
\begin{enumerate}
\item $X_0\in L^{p}(\PP)$; 
\item the coefficient functions $b, \sigma$ are continuous in $t$, uniformly Lipschitz continuous in $\alpha$ and in $(\mu_{t})_{t\in[0,T]}$  in the following sense : there exists $L>0$ s.t.  
\begin{align*}
&\forall \, t \in[0,T], \; \forall \,\alpha, \beta\in\CRD \text{ and } \forall \, (\mu_t)_{t\in[0,T]}, (\nu_t)_{t\in[0,T]}\in\CPP,\nonumber\\
& \max \Big( \big|b \big(t,\alpha, (\mu_t)_{t\in[0,T]}\big)-b\big(t,\beta, (\nu_t)_{t\in[0,T]}\big)\big| , 
\vertiii{\sigma\big(t,\alpha, (\mu_t)_{t\in[0,T]}\big)-\sigma\big(t,\beta, (\nu_t)_{t\in[0,T]}\big)}  \Big)\\
&\qquad \leq L \Big[ \;\Vert\alpha-\beta\Vert_{\sup}+\sup_{t\in[0,T]}\mathcal{W}_p(\mu_t,\nu_t) \Big]. 
\end{align*} 
\end{enumerate}

The second, stronger set of assumptions, allows us to deduce our numerical results. 

\medskip 

\noindent \textbf{Assumption (II).} There exists  $p\geq 2$  such that
\begin{enumerate}
    \item Assumption (I) holds with $p+\varepsilon_0$ for some $\varepsilon_0>0$; 
    \item the coefficient functions $b, \sigma$ are $\gamma$-H\"older in $t$ for some $0< \gamma\leq 1$, uniformly in $\alpha$ and in $(\mu_{t})_{t\in[0,T]}$, in the following sense : there exists $L>0$ s.t. 
\begin{align}
	&\forall \, t,s\in[0,T], \; \forall \, \alpha\in\CRD \text{ and }\forall \, (\mu_{t})_{t\in[0,T]}\in\mathcal{C} \big([0,T], \mathcal{P}_{p}(\R^d)\big),\nonumber\\
	& \max \Big( \big|b\big(t,\alpha, (\mu_t)_{t\in[0,T]}\big)-b\big(s,\alpha, (\mu_t)_{t\in[0,T]}\big)\big|,  \vertiii{\sigma\big(t,\alpha, (\mu_t)_{t\in[0,T]}\big)-\sigma\big(s,\alpha, (\mu_t)_{t\in[0,T]}\big)} \Big)\nonumber\\
	&\qquad \qquad \qquad \qquad   \leq L\Big(1+\Vert \alpha\Vert_{\sup}+ \sup_{t\in[0,T]}\mathcal{W}_{p}(\mu_t, \delta_0)\Big)|t-s|^{\gamma},
\end{align}
where $\delta_0$ is the Dirac measure at $0$. 
\end{enumerate}

\smallskip
Assumption (I) is a sufficient condition for the existence and strong uniqueness of the solution $(X_t)_{t\in[0,T]}$ to the path-dependent McKean-Vlasov equation \eqref{eq:path-dependent_McKean}. In fact, the following result can be extracted from \cite[Theorem A.3]{Djete_2022}, see also the earlier version of this work \cite[Theorem 1.1]{Bernou_2023_v1}. 
\begin{thm}
\label{thm:well-posedness}
Assume that Assumption (I) holds with $p \ge 2$. There exists a unique strong solution $(X_t)_{ t \in [0,T]}$ from $(\Omega, \mathcal{F}, \mathbb{P})$ to $(\mathcal{C}([0,T], \mathbb{R}^d), \| \cdot \|_{\sup})$ of the path-dependent McKean-Vlasov equation \eqref{eq:path-dependent_McKean}. Moreover, this unique solution $(X_t)_{t\in[0,T]}$ satisfies that
\begin{equation}\label{eq:thm_well_posed}
\Big\Vert \sup_{t\in[0, T]}|X_t|\;\Big\Vert_{p} \le \Gamma \Big(1 + \|X_0\|_{p}\Big), 
\end{equation}
where $\Gamma > 0$ is a constant depending on $b, \sigma, L, T, d, q, p$.
\end{thm} \dd


Moreover, consider now  the following $N$-particle system $(X^{1,N}_t, \dots, X^{N,N}_t)_{t \in [0,T]}$ defined by
\begin{align}\label{eq:particle_system_intro}
\begin{cases}
\:X^{i,N}_t= X^{i,N}_0 \!+ \int_0^t b\big(s, X^{i,N}_{\cws}, \mu^N_\cws \big)  \d s  + \int_0^t \sigma \big(s, X^{i,N}_\cws, \mu^N_\cws \big) \d B^{i}_s,   \hspace{.2cm} 1 \le i \le N, \, t \in [0,T], \\
\\
 \mu^N_t := \frac{1}{N} \sum_{i=1}^N \delta_{X^{i,N}_t}, \quad t \in [0,T], 
\end{cases}
\end{align}
where $X^{1,N}_0, \dots, X^{N,N}_0$ are i.i.d. random variables  having the same distribution as $X_0$, and $B^i := (B^i_t)_{t \in [0,T]}$, $1 \le i \le N$ are  independent  $\R^q$-valued standard Brownian motions  and independent of $X^{1,N}_0, \dots, X^{N,N}_0$. 
Assumption (I) implies the following propagation of chaos result, Theorem \ref{thm:global_thm_chaos}. We note that several variants of propagation of chaos property for path-dependent McKean-Vlasov equation were recently addressed, see Section \ref{subsec:previous_results} below for comparison with our setting; as we could not find a result readily applicable to our framework, we provide a proof in Appendix \ref{sec:chaos} that adapts the classical argument using synchronous coupling to the path-dependent setting.  

\begin{thm}[Propagation of chaos]\label{thm:global_thm_chaos}
Assume that Assumption (I) holds with $p \ge 2$. Let $X$ be the unique solution to \eqref{eq:path-dependent_McKean}  and write $\mu := \mathbb{P} \circ X^{-1}$ and $(\mu_t)_{t \in [0,T]}$ for its marginal distributions.
	Let $(X^{1,N}_t, \dots, X^{N,N}_t)_{t \in [0,T]}$ be the processes defined by the $N$-particle system \eqref{eq:particle_system_intro} and $(Y^1, \dots, Y^N)$ be $N$ i.i.d. copies of $X$. Then
\begin{enumerate}
\item there holds, for some constant $C_{d,p,L,T} > 0$, for all $N \ge 1$,
\begin{align}\label{eq:thm_chaos_1}
\Big\| \sup_{t \in [0,T]} \mathcal{W}_p\Big( \mu_t, \frac1{N} \sum_{i=1}^N \delta_{X^{i,N}_t} \Big) \Big\|_p \le C_{d,p,L,T} \Big\| \mathbb{W}_p(\mu, \nu^N) \Big\|_p, 
\end{align}
where $\nu^N := \tfrac1{N} \sum_{i=1}^N \delta_{Y^i}$ is the empirical measure of $(Y^1,\dots,Y^N)$. Moreover, the norm $\| \mathbb{W}_p(\mu, \nu^N) \|_p$ converges to $0$ as $N \to \infty$. 
\item For a fixed $k \in \mathbb{N}^*$, we have the weak convergence:
\begin{align}
\label{eq:thm_chaos_2}
\Big( X^{1,N}, \dots, X^{k,N} \Big) \Rightarrow \Big(Y^1, \dots, Y^k \Big) \qquad \mathrm{as } \quad N \to \infty.
\end{align}
\end{enumerate}
\end{thm}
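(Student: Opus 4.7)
The natural strategy is a \emph{synchronous coupling}. I would introduce, on the same probability space, processes $Y^1, \dots, Y^N$ defined as the unique strong solutions (by Theorem \ref{exisuniq}) of the path-dependent McKean-Vlasov equation \eqref{eq:path-dependent_McKean} driven by the Brownian motions $B^i$ and starting from $X^{i,N}_0$. Since $(X^{i,N}_0, B^i)_{1 \le i \le N}$ are i.i.d., the resulting $Y^i$ are i.i.d.\ copies of $X$; using this particular coupling to read \eqref{eq:thm_chaos_1} is legitimate because both sides depend only on the individual distributions of $(X^{i,N})_i$ and $(Y^i)_i$.

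Set $\eta(t) := \EE\big[\sup_{s \in [0,t]} |X^{1,N}_s - Y^1_s|^p\big]$, which is independent of the particle index by exchangeability. Writing the SDE for the difference $X^{i,N} - Y^i$ and combining the Lipschitz hypothesis of Assumption (I) with H\"older's inequality on the drift part and the Burkholder-Davis-Gundy inequality on the martingale part, I expect to obtain
\begin{equation*}
	\eta(t) \le C \int_0^t \Big[\eta(s) + \EE\big[d_p\big((\mu^N_{u \wedge s})_{u \in [0,T]}, (\mu_{u \wedge s})_{u \in [0,T]}\big)^p\big]\Big]\,\d s,
\end{equation*}
for some $C = C(p,L,T) > 0$. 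The crucial step is then to control the measure-term $d_p((\mu^N_{\cdot \wedge s}), (\mu_{\cdot \wedge s})) = \sup_{u \in [0,s]} \mathcal{W}_p(\mu^N_u, \mu_u)$ by introducing the empirical measure $\nu^N_u := \frac1N \sum_{i=1}^N \delta_{Y^i_u}$ of the i.i.d.\ sample: using the synchronous coupling as transport plan on one side and the domination of each time-marginal by the sup-norm path-space Wasserstein on the other, one gets
\begin{equation*}
	\mathcal{W}_p(\mu^N_u, \mu_u) \le \mathcal{W}_p(\mu^N_u, \nu^N_u) + \mathcal{W}_p(\nu^N_u, \mu_u) \le \Big(\frac1N \sum_{j=1}^N |X^{j,N}_u - Y^j_u|^p\Big)^{1/p} + \mathbb{W}_p(\nu^N, \mu).
\end{equation*}
Taking the supremum over $u \in [0,s]$, raising to the $p$-th power, exchanging sup and sum, taking expectation and using exchangeability converts the first right-hand term into $\eta(s)$; hence Gronwall's lemma yields $\eta(T) \le C_{p,L,T}\, \EE[\mathbb{W}_p(\nu^N, \mu)^p]$. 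Re-injecting this estimate in the decomposition above, now with the supremum on $t \in [0,T]$ taken outside, gives \eqref{eq:thm_chaos_1}. The $L^p$-convergence $\|\mathbb{W}_p(\mu, \nu^N)\|_p \to 0$ is then a classical consequence of empirical-measure results on Polish spaces (Varadarajan's theorem combined with uniform integrability, or Fournier-Guillin-type estimates) applied to $\CRD$ under the sup norm, the moment condition $\mu \in \mathcal{P}_p(\CRD)$ being supplied by \eqref{eq:thm_well_posed}.

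The weak convergence \eqref{eq:thm_chaos_2} is a direct corollary of the same synchronous-coupling estimate: $\|\sup_{t \in [0,T]} |X^{i,N}_t - Y^i_t|\|_p \to 0$ for each fixed $i$ implies $L^p$-convergence, and \emph{a fortiori} convergence in distribution, of $(X^{1,N}, \dots, X^{k,N})$ to $(Y^1, \dots, Y^k)$ in $\CRD^k$; since weak convergence is distributional, the limit is independent of the specific coupling.

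\textbf{Main obstacle.} The principal difficulty is the path-dependence of the measure argument: $d_p((\mu^N_{\cdot \wedge s}), (\mu_{\cdot \wedge s}))$ is a supremum over the full history $[0,s]$, which precludes a naive Gronwall applied directly on the measure-induced term. The bridging through the i.i.d.\ empirical measure $\nu^N_u$ and the resulting uniform-in-$u$ bound $\mathcal{W}_p(\nu^N_u, \mu_u) \le \mathbb{W}_p(\nu^N, \mu)$ are what enable the Gronwall closure; carefully tracking the various sup/integral exchanges, the exchangeability step, and the path-space empirical-measure convergence constitutes the bulk of the technical work.
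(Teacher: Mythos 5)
Your proposal follows essentially the same strategy as the paper: synchronous coupling of the particle system with i.i.d. solutions $Y^i$ of \eqref{eq:path-dependent_McKean} driven by the $B^i$, triangle inequality through the i.i.d. empirical measure $\nu^N$, the explicit coupling $\frac1N\sum_i\delta_{(X^{i,N}_u,Y^i_u)}$ to control $\mathcal{W}_p(\mu^N_u,\nu^N_u)$, a Gronwall closure, the projection bound $\mathcal{W}_p(\nu^N_u,\mu_u)\le\mathbb{W}_p(\nu^N,\mu)$ via the $1$-Lipschitz coordinate map, and a standard empirical-measure convergence result in Wasserstein distance on the path space.

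The one point where you diverge, and it is a minor technical variation rather than a different route, is the Gronwall step. You work with the $p$-th moment $\eta(t)=\EE[\sup_{s\le t}|X^{1,N}_s-Y^1_s|^p]$ and apply H\"older to the BDG term so that the closed inequality is of the plain form $\eta(t)\le C\int_0^t[\eta(s)+(\cdots)]\,\d s$, amenable to the classical Gronwall lemma. The paper instead works with the $L^p$ norms $f_i(t)=\|\sup_{s\le t}|X^{i,N}_s-Y^i_s|\|_p$, which produces a mixed inequality containing both $\int_0^t f(s)\,\d s$ and $\bigl(\int_0^t f(s)^2\,\d s\bigr)^{1/2}$, and then closes it with the ``\`a la Gronwall'' Lemma~\ref{Gronwall}. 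Both arguments lead to the same conclusion. Two bookkeeping remarks: (i) the identification $\nu^N_u=\nu^N\circ\pi_u^{-1}$, which underlies the inequality $\mathcal{W}_p(\nu^N_u,\mu_u)\le\mathbb{W}_p(\nu^N,\mu)$, is almost tautological but is explicitly verified in the paper (Lemma~\ref{lem:marginals}); (ii) for the weak convergence in \eqref{eq:thm_chaos_2} you correctly observe that convergence is a property of distributions, so the specific coupling used to prove the $L^p$ estimate does not matter; the paper makes the same point by bounding $\EE[\sup_{1\le i\le k}\sup_t|X^{i,N}_t-Y^i_t|^p]$ by $k\bar f(T)^p$, which you recover through exchangeability.
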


Our main result draws inspiration from the above propagation of chaos property to provide a convergence rate for the numerical scheme \eqref{eq:particlesystem}:

\begin{thm}[Convergence rate of the particle method]\label{thm:particlemethod}
Let $M \ge 2T + 1$ be an integer. For every $m \in \llM$, let $\widetilde{\mu}_{t_m}^{N,h}$ denote the empirical measures defined by \eqref{def:empirical_measure} and let  $\widetilde{\mu}^h_{t_m}$  be the probability distribution of $\widetilde{X}^h_{t_m}$ defined by the interpolated scheme in  \eqref{eq:discretescheme}. If Assumption (II)  holds with some $p\geq2$, we have
\begin{align}\label{mainresult}
&\Big\Vert \max_{0\leq m \leq M} \mathcal{W}_p\big(\widetilde{\mu}_{t_m}^{N,h}, \widetilde{\mu}^h_{t_m}\big)\Big\Vert_p\leq C_{b, \sigma, L, T, d, q, p, \varepsilon_0, \Vert X_0\Vert_{p+\varepsilon_0}} (M+1)\\
&\hspace{2cm}\times\begin{cases}
N^{-1/2p}+N^{-\frac{\varepsilon_0}{p(p+\varepsilon_0)}} & \mathrm{if}\:p>d/2\:\mathrm{and}\:\varepsilon_0\neq p,\\
N^{-1/2p}\big(\log(1+N)\big)^{1/p}+N^{-\frac{\varepsilon_0}{p(p+\varepsilon_0)}} & \mathrm{if}\:p=d/2\:\mathrm{and}\:\varepsilon_0\neq p,\\
N^{-1/d}+N^{-\frac{\varepsilon_0}{p(p+\varepsilon_0)}} & \mathrm{if}\:p\in(0, d/2)\:\mathrm{and}\:\varepsilon_0\neq d/(d-p)-p.
\end{cases}\nonumber
\end{align}
Moreover, let $(\mu_t)_{t\in[0, T]}$ denote the marginal distributions of the unique solution $(X_t)_{t\in[0, T]}$ to \eqref{eq:path-dependent_McKean}, then
\begin{align}\label{mainresult2}
&\Big\Vert \max_{0\leq \ell \leq M} \mathcal{W}_p\big(\widetilde{\mu}_{t_\ell}^{N,h}, \mu_{t_\ell}\big)\Big\Vert_p\leq C_{b, \sigma, L, T, d, q, p, \varepsilon_0, \Vert X_0\Vert_{p+\varepsilon_0}} \\
&\times \begin{cases}
h^{ \gamma}+(h\big|\ln(h)\big|)^{1/2}+(M+1)\big[N^{-1/2p}+N^{-\frac{\varepsilon_0}{p(p+\varepsilon_0)}} \big]& \mathrm{if}\:p>d/2\:\mathrm{and}\:\varepsilon_0\neq p,\\
h^{ \gamma}+(h\big|\ln(h)\big|)^{1/2}+(M+1)\big[N^{-1/2p}\big(\log(1+N)\big)^{1/p}+N^{-\frac{\varepsilon_0}{p(p+\varepsilon_0)}}\big] & \mathrm{if}\:p=d/2\:\mathrm{and}\:\varepsilon_0\neq p, \\
h^{ \gamma}+(h\big|\ln(h)\big|)^{1/2}+(M+1)\big[N^{-1/d}+N^{-\frac{\varepsilon_0}{p(p+\varepsilon_0)}}\big] & \hspace{-1.7cm} \mathrm{if}\:p\in(0, d/2)\: \mathrm{and}\:\varepsilon_0\neq \frac{d}{d-p}-p.
\end{cases}\nonumber
\end{align}
\end{thm}


\begin{cor}\label{cor} Let $\widetilde{X}^{1, N}$ denote the first process of the particle system defined by \eqref{eq:contiparticlesystem} and let $X$ denote the unique solution to \eqref{eq:path-dependent_McKean}, then, under Assumption (II) with $p \ge 2$, for $M \ge 2T + 1$ an integer,
\begin{align}\label{erreurforte}
&\mathbb{W}_p\big(\mathcal{L}(\widetilde{X}^{1, N}), \mathcal{L}(X)\big)\leq C_{b, \sigma, d,q,L,T,p,\varepsilon_0 ,\Vert X_0 \Vert_{p+\varepsilon_0}}\\
&\times \begin{cases}
h^{ \gamma}+(h\big|\ln(h)\big|)^{1/2}+(M+1)\big[N^{-1/2p}+N^{-\frac{\varepsilon_0}{p(p+\varepsilon_0)}} \big]& \mathrm{if}\:p>d/2\:\mathrm{and}\:\varepsilon_0\neq p,\\
h^{ \gamma}+(h\big|\ln(h)\big|)^{1/2}+(M+1)\big[N^{-1/2p}\big(\log(1+N)\big)^{1/p}+N^{-\frac{\varepsilon_0}{p(p+\varepsilon_0)}}\big] & \mathrm{if}\:p=d/2\:\mathrm{and}\:\varepsilon_0\neq p, \\
h^{ \gamma}+(h\big|\ln(h)\big|)^{1/2}+(M+1)\big[N^{-1/d}+N^{-\frac{\varepsilon_0}{p(p+\varepsilon_0)}}\big] & \hspace{-1.7cm} \mathrm{if}\:p\in(0, d/2)\: \mathrm{and}\:\varepsilon_0\neq \frac{d}{d-p}-p.
\end{cases}\nonumber
\end{align}
\end{cor}


We comment on \eqref{mainresult2} in the next three remarks. 

\begin{rem}
\label{rem_1}
    In the case where $T = 1$ and where Assumption (II) holds with $p =2$, $\epsilon_0 \ge 2$ and $\gamma = 1$, one gets, using that $h = T/M$, for the $\mathcal{W}_2$ error in dimension $1$, an estimate of the form 
\[ \Big\Vert \max_{0\leq \ell \leq M} \mathcal{W}_2\big(\widetilde{\mu}_{t_\ell}^{N}, \mu_{t_\ell}\big)\Big\Vert_2 \le C_T \Big(M^{-1} + \Big(\frac{\ln(M)}{M}\Big)^{\tfrac12} + M N^{-\tfrac14} \Big). \]
This provides a bound going to zero for the method in the case $M=N^{\frac14 - \beta}$, $\beta \in (0,\frac14)$. For instance, for $\beta = \frac18$, the bound is of order $O(N^{-\frac1{16}} \ln(N)^\frac12)$. 
\end{rem}

\begin{rem}[A conjecture on the rate of convergence]
\label{rem_2}
    In the setting of Remark \ref{rem_1}, with the choice $M=N^{\beta}$ for $\beta \ge \frac14$, our bound does not provide any insight about the convergence of the method. We believe this extra factor $(M+1)$ on the right-hand-side of \eqref{mainresult2} of the error bound to be an artefact of the proof. More precisely it originates from the current state of the art regarding the convergence of the empirical measure of i.i.d. random processes, see Section \ref{subsec:previous_results} below. Following this intuition, we rather formulate the following conjecture to hold under Assumption (II) with $ p \ge 2$, for $M \ge 2T + 1$ an integer,
\begin{align}
&\Big\Vert \max_{0\leq \ell \leq M} \mathcal{W}_p\big(\widetilde{\mu}_{t_\ell}^{N,h}, \mu_{t_\ell}\big)\Big\Vert_p\leq C_{b, \sigma, L, T, d, q, p, \varepsilon_0, \Vert X_0\Vert_{p+\varepsilon_0}} \nonumber\\
&\times \begin{cases}
h^{ \gamma}+(h\big|\ln(h)\big|)^{1/2}+\big[N^{-1/2p}+N^{-\frac{\varepsilon_0}{p(p+\varepsilon_0)}} \big]& \mathrm{if}\:p>d/2\:\mathrm{and}\:\varepsilon_0\neq p,\\
h^{ \gamma}+(h\big|\ln(h)\big|)^{1/2}+\big[N^{-1/2p}\big(\log(1+N)\big)^{1/p}+N^{-\frac{\varepsilon_0}{p(p+\varepsilon_0)}}\big] & \mathrm{if}\:p=d/2\:\mathrm{and}\:\varepsilon_0\neq p, \\
h^{ \gamma}+(h\big|\ln(h)\big|)^{1/2}+\big[N^{-1/d}+N^{-\frac{\varepsilon_0}{p(p+\varepsilon_0)}}\big] & \hspace{-1.7cm} \mathrm{if}\:p\in(0, d/2)\: \mathrm{and}\:\varepsilon_0\neq \frac{d}{d-p}-p,
\end{cases}\nonumber
\end{align}
    which, in the setting of Remark \ref{rem_1} writes 
    \begin{align}
        \label{eq:conjecture}
	 \Big\Vert \max_{0\leq \ell \leq M} \mathcal{W}_2\big(\widetilde{\mu}_{t_\ell}^{N}, \mu_{t_\ell}\big)\Big\Vert_2 \le C_T \Big(M^{-1} + \Big(\frac{\ln(M)}{M}\Big)^{\tfrac12} + N^{-\tfrac14} \Big). 
    \end{align} In this perspective, we show the non-optimality of the bound in Remark \ref{rem_1} (and thus in \eqref{mainresult2}) for the modified Ornstein-Uhlenbeck process introduced in Section \ref{subsec:OU} below, for which our numerical results clearly exhibit some convergence as $N$ increases when $M = N^{\beta}$ for $\beta = 0.55 > \frac12$, see Figure \ref{fig:OU_N55}, and in our application to a neural mass model, where again for $M > N^{\frac12}$, our results show steady convergence as $N$ increases for all three coordinates, see Figure \ref{fig:Wasserstein_Neural}. In contrast, for such values of $M$, the bound from Remark \ref{rem_1} does not predict any convergence as $N \to \infty$. 
\end{rem}

\begin{rem}[Dependency in time]
    A careful examination of the proofs shows that, in both \eqref{mainresult} and \eqref{mainresult2}, the constant behaves like $e^{e^{2T}}$ as $T$ increases, due to our two applications of Gronwall's lemma. So far, uniform-in-time propagation of chaos results (which would be a first step to obtain a uniform-in-time convergence rate for the particle method) were only obtained in the case of standard McKean-Vlasov equations with additional convexity assumptions on $b$ and $\sigma$, see \cite{BRTV-98, BGG-13, Carrillo_2003, Delarue_Tse_21}. To our knowledge, the derivation of uniform-in-time propagation of chaos results for path-dependent McKean-Vlasov equations, even under stronger assumptions on the coefficients, remains an open problem.
\end{rem}

\subsection{Literature review}\label{subsec:previous_results}


The standard McKean-Vlasov equation \eqref{eq:McKean_standard} has widespread applications in diverse fields such as opinion dynamics \cite{Hegselmann_2002}, finance (for instance through the rank-based model, see \cite{Karatzas_2009} and the references therein), plasma physics \cite[Chapter 1]{Bittencourt_2004} and neurosciences \cite{Caceres_2011, Carrillo_2015, Delarue_2015}. \dd It also plays a key role in the theory of mean-field games \cite{Carmona_2018, Carmona_2018b, Cardaliaguet_2013}, with applications in biological models on animal competition, road traffic engineering and dynamic economic models, see Huang-Malham\'e-Caines \cite{Caines_2006} and the references within. In this context, the study of the convergence rate of particle methods for numerical simulations has been initiated by Talay and Bossy-Talay, \cite{Talay_1996, Bossy_1997} and has been an active area of research in the last decades \cite{Antonelli_2002, liu2022particle, hoffmann2023statistical}.

The generalized McKean-Vlasov equation with path-dependent coefficients is addressed in recent works, see e.g. Cosso et al. \cite{Cosso_2020}, Lacker \cite{Lacker_2018}, Djete et al. \cite{Djete_2022}, and Baldasso et al. \cite{Baldasso_2022}. In these papers, the dependence on the measure argument differs from \eqref{eq:path-dependent_McKean}. Specifically, the dynamics in \cite{Cosso_2020, Djete_2022, Lacker_2018} are expressed as:
\begin{align}
\label{eq:path_dep_2}
\d X_t &= b\left(t, X_{\cdot \wedge t}, \mathcal{L}(X_{\cdot \wedge t})\right) \d t + \sigma\left(t, X_{\cdot \wedge t}, \mathcal{L}(X_{\cdot \wedge t})\right) \d B_t,
\end{align}
where $\mathcal{L}(X_{\cdot \wedge t}) \in \calP_p(\mathcal{C}([0,T], \R^d))$ represents the probability distribution of the entire path $X_{\cdot \wedge t}$. Regarding the propagation of chaos property of \eqref{eq:path_dep_2}, \cite{Lacker_2018} studies the convergence with respect to the total variation distance. His method uses a Girsanov theorem, hence the diffusion coefficient $\sigma$ cannot depend on the measure argument $\mathcal{L}(X_{\cdot \wedge t})$. Regarding the relation between total variation distance and Wasserstein distance, no overarching comparison exists. However, in cases where the value space of the random variable is bounded (which is not assumed in this paper), the Wasserstein distance can be bounded by the total variation distance multiplied by the diameter of the space (see, e.g., \cite{gibbs2002choosing}). It is worth noting that Theorem \ref{thm:global_thm_chaos} studies the propagation of chaos in terms of the Wasserstein distance, bounding the convergence rate by $\mathbb{W}_p(\mu, \nu^N)$, the convergence rate in Wasserstein distance of the empirical measure of i.i.d. random processes. This inequality, as expressed in \eqref{eq:thm_chaos_1}, paves the way for collaborating on future advancements in the study of the convergence rate of empirical measures. While such rates in finite-dimensional cases are well-understood (see, e.g., \cite{fournier2015rate}), recent studies such as \cite{lei2020convergence} have begun to explore convergence rates of random processes valued in separable Hilbert spaces having polynomial and exponential decay. Additionally, \cite{Baldasso_2022} presents a large deviation result for the path-dependent McKean-Vlasov equation subject to random media $\omega$, assuming a bounded drift $b$ and a diffusion coefficient $\sigma$ depending only on $\omega$. The large deviation result presented in \cite{Baldasso_2022} implies the propagation of chaos property (see Corollary 4.4 of \cite{Baldasso_2022}). Large deviations for the standard McKean-Vlasov equation are also discussed in \cite{Budhiraja2012}, with Section 7.2 presenting a generalization to the path-dependent structure.


Certainly, in the path-dependent McKean-Vlasov equation \eqref{eq:path-dependent_McKean}, the measure argument $\mu_{\cdot \wedge t}$ made from the marginal distributions taken in $\mathcal{C}([0,T], \calP_p(\R^d))$ instead of $\calP_p\big( \mathcal{C}([0,T], \R^d) \big)$ can be considered as a special case of the dependency on $\mathcal{L}(X_{\cdot \wedge t})$. Nevertheless, this framework constitutes a trade-off between the theoretical aspects, the numerical perspectives and the applications.  Indeed, our setting can be simulated more easily and with an explicit convergence rate.  The potential adaptation of our strategy to \eqref{eq:path_dep_2} remains unclear, in particular due to our use of the interpolator, and is left as an open problem.  Regarding applications, some path-dependent McKean-Vlasov equations, fitting \eqref{eq:path-dependent_McKean}, can also be found in the recent work on the 2d parabolic-parabolic Keller-Segel equation, see Toma{\v{s}}evi{\'{c}} and Fournier-Toma{\v{s}}evi{\'{c}} \cite[Equation (1.2)]{Tomasevic_2021} \cite{Fournier_2023b}.  The path-dependent framework has also been recently applied to quantitative finance, see  \cite{guyon2023volatility} for a discussion on the volatility modelling.  In Section 2, we present theoretically and through numerical results a path-dependent model for neural masses in the visual cortex: there, the path-dependency allows one to take into account potentiation effects.

\subsection{Plan of the paper}

This article is organized as follows. In Section \ref{sec:Applications} we present our applications and the corresponding numerical results. We focus first in Section \ref{subsec:OU} on a modified Ornstein-Uhlenbeck process with memory effect. This toy model presents the key feature of having an explicit solution, making it an interesting base point to confirm our findings from Theorem \ref{thm:particlemethod} and study numerically the conjecture from Remark \ref{rem_2}. In Section \ref{subsec:neural}, we introduce an enriched Jansen-Rit model, with memory and delay effects, and the associated numerical results obtained through the particle method. Section \ref{sec:numericalcvg} first focuses on preliminary results used to prove Theorem \ref{thm:particlemethod} (Section \ref{subsec:prop_im}), then presents the derivation of a convergence rate for the Euler schemes \eqref{eq:discretescheme} and \eqref{eq:def_continuous_2} in Section \ref{sec:Euler_scheme} and culminates in the proof of Theorem \ref{thm:particlemethod} (Section \ref{subsec:cvgparticlemethod}). Finally, Appendix \ref{sec:chaos} contains our proof of Theorem \ref{thm:global_thm_chaos}, while Appendix \ref{appB} contains some proofs from Section \ref{sec:Applications} and \ref{sec:numericalcvg} which rely on classical arguments, that we include here for completeness.

\section{Applications and numerical simulations} 
\label{sec:Applications}

We investigate our simulation method on two different examples. The simulation code is
available via \href{https://github.com/ArmdBrn/McKean_PathDep/blob/main/Intrinsic_excitability_Final.ipynb}{Github}, see \href{https://github.com/ArmdBrn/McKean_PathDep/blob/main/Intrinsic_excitability_Final.ipynb}{https://bit.ly/45r7na6}.

\subsection{A linear interaction with delay}
\label{subsec:OU}

In dimension 1, we consider the following path-dependent McKean-Vlasov equation with delay and linear interaction: 
\begin{align}
	\label{eq:linear_interaction}
	\d X_t = 2 \left[\int_0^t \int_{\R} \big(x - X_t\big) \mu_s(\d x) \, \d s \right]\, \d t + \d B_t,  \qquad \mathcal{L}(X_0) = \mathcal{N}(m,1),
\end{align}
for some $m \in \R$,  where $(B_t)_{t \ge 0}$ is a standard Brownian motion independent of $X_0$ and where we write $\mathcal{N}(m,\sigma^2)$ to denote the Gaussian random variable with mean $m \in \R$ and variance $\sigma^2 > 0$. To fit with \eqref{eq:path-dependent_McKean}, for $t \in [0,T]$, $\alpha \in \mathcal{C}([0,T], \mathbb{R})$ and $(\mu_s)_{s\in[0,T]} \in \mathcal{C}([0,T], \mathcal{P}_p(\R))$, our drift writes
\begin{equation}
    \label{eq:drift_linear}
    b(t, \alpha, (\mu_s)_{s\in[0,T]}): = 2 \int_0^t \Big[ \int_{\mathbb{R}} (x - \alpha_T) \,  \mu_s (\d x) \Big] \d s.
\end{equation} 
It is easily checked that Equation \eqref{eq:linear_interaction} writes as \eqref{eq:path-dependent_McKean}, where the drift $b(\cdot, \cdot, \cdot)$ is given by \eqref{eq:drift_linear} and the volatility $\sigma = 1$ satisfy Assumption (II) with $\gamma = 1$ and $p = 2$. 
Moreover, our choice of the model \eqref{eq:linear_interaction} is guided by the existence of an explicit solution, as established in Proposition \ref{prop:linear_interaction} below. The proof of this proposition is provided in Appendix \ref{proofsubsec:appli}.
\begin{prop} 
\label{prop:linear_interaction}
For all $T > 0$ fixed, the equation \eqref{eq:linear_interaction} has a unique strong solution $(X_t)_{t\in[0,T]}$ given by
\[ X_t = (X_0 - m) e^{-t^2} + m + \int_0^t e^{-(t^2 - r^2)} \, \d B_r. \]
In particular, for all $t \in [0,T]$,
\[ X_t \sim \mathcal{N}\Big(m, e^{-2 t^2} \big( 1+ \int_0^t e^{2r^2} \, \d r \big) \Big).\]
\end{prop}

\subsubsection{Numerical results}

We turn to numerical results for the model \eqref{eq:linear_interaction}. We implement the particle method \eqref{eq:particlesystem}. We pick $T = 1$, and several values of both the particle number $N$ and the number of time steps $M$. Each simulation uses a Monte-Carlo approximation with $N_{MC} = 30$ implementations. For all $i \in \llbracket N \rrbracket^*$, $j \in \llbracket N_{MC} \rrbracket^*$, $m \in \llM$ and $t_m=\frac{mT}{M}$, we use $X^{i,N,j}_{t_m}$ to denote the $i$-th particle of the $j$-th Monte Carlo simulation when the number of particles is $N$, taken at time $t_m$. We compute the error in the $L^2$-Wasserstein distance, using that in dimension $1$, the following equality holds for $\mu, \nu \in \calP_2(\R)$:
\begin{align} 
	\label{eq:formula_Wasserstein}
	\mathcal{W}_2(\mu,\nu)^2 = \int_0^1 \Big|F_\mu^{-1}(\xi) - F_\nu^{-1}(\xi) \Big|^2 \, \d \xi, 
	\end{align}
where $F_\mu^{-1}$ and $F_\nu^{-1}$ denote the quantile functions associated with $\mu$ and $\nu$, respectively. Setting for all $(N,j,m) \in \mathbb{N}^* \times \llbracket N_{MC} \rrbracket^* \times \llM$,
\[ \hat \mu^{N,j}_{t_m} := \frac1{N} \sum_{i=1}^N \delta_{X^{i, N,j}_{t_m}}, \]
our error corresponds to an estimation of the $\mathcal{W}_2$ distance at time $T$, and is given, setting $\mu_T = \mathcal{N}(m, e^{-2T^2} (1 + \int_0^T e^{2r^2} \, \d r))$, by
\begin{align}
\label{eq:formula_error}
\hat E_N^2 &= \frac{1}{N_{MC}} \sum_{j=1}^{N_{MC}} \Big\{ \epsilon \, (1-2 \epsilon) \Big[\frac{1}{2} \big|F^{-1}_{\hat \mu^{N,j}_T}(\epsilon) - F^{-1}_{\mu_T}(\epsilon)\big|^2 + \frac12\big|F^{-1}_{\hat \mu^{N,j}_T}(1-\epsilon) - F^{-1}_{\mu_T}(1-\epsilon)\big|^{\frac12} \\
&\hspace{5cm} + \sum_{k=1}^{1/\epsilon - 1} \big|F^{-1}_{\hat \mu^{N,j}_T}(k\epsilon) - F^{-1}_{\mu_T}(k\epsilon) \big|^2 \Big] \Big\}. \nonumber 
\end{align}
The parameter $\epsilon$ is both the precision of the discretization of the integral appearing in \eqref{eq:formula_Wasserstein} and the truncation for this value (since the quantile functions at $0$ and $1$ take infinite values). In the simulation, we choose $\epsilon = 10^{-6}$, as higher choices create non-negligible truncation errors.

In Figure \ref{fig:OU_M2000}, we display in $\log_2-\log_2$ scale results obtained with the choice of a fixed value $M = 2000$ and $N$ ranging from $2^7$ to $2^{15}$. A linear regression of the results provides the line $y = -0.977 \, x + 1.40$ (coefficients are rounded to three significant numbers).

Next, we turn to the case where $M$ depends on $N$. 
    Since Assumption (II) with $\gamma = 1$ is satisfied by the model \eqref{eq:linear_interaction}, and because of the explicit solution given by Proposition \ref{prop:linear_interaction}, we have a prototypical example to challenge the sharpness of our findings, see Remarks \ref{rem_1} and \ref{rem_2}, and in particular our conjecture that the $(M+1)$ factor appearing in front of the last term the estimation \eqref{mainresult2} is an artifice of the proof.
We thus consider $M = N^{\frac{1}{2} + \epsilon_0}$ for some $\epsilon_0 > 0$. If the factor $M$ is indeed involved in the error, for the case $p = 2$, $q = \infty$, \eqref{mainresult2} only provides a bound $N^{\frac14 + \epsilon_0}$ and thus indicates no convergence. To challenge our conjecture from Remark \ref{rem_2} in Figure \ref{fig:OU_N55}, we consider the case $M = N^{0.55}$ with $N = 2^j$, $9 \le j \le 18$. As predicted by the conjecture, we still obtain a convergence of the error in Wasserstein distance, which is steady and interpolated by the line $y = -0.901 x + 0.528$. 
Note further that the rate $M^{-\frac12} + N^{-\frac14}$ which we conjecture in Remark \ref{rem_2} appears conservative for this toy example; as this would lead to a slope of about $-0.25$ in Figure \ref{fig:OU_N55}, much slower than the observed slope $-0.901$.

Still for the model \eqref{eq:linear_interaction}, we consider at last the case $M = 100$, see Figure \ref{fig:OU_M100}. Taking a constant $M$ allows us to challenge the dependency in $h = T/M$ of our bound \eqref{mainresult2}. For $N = 2^j$ with $9 \le j \le 19$ we observe that the convergence of the error towards zero slows down as $N$ reaches $2^{17}$. This hints that indeed, the limiting factor hindering a further convergence is the size of the time step $M$, rather than the number of particle $N$, as announced.

\begin{figure}[htbp]

\caption*{Figures 1-3: Numerical results for the path-dependent Ornstein-Uhlenbeck model \eqref{eq:linear_interaction}.}

\vspace{.5cm}

    \centering
    \begin{minipage}[c][0.3\textheight][c]{0.61\textwidth}
        \centering
        \includegraphics[width=\textwidth]{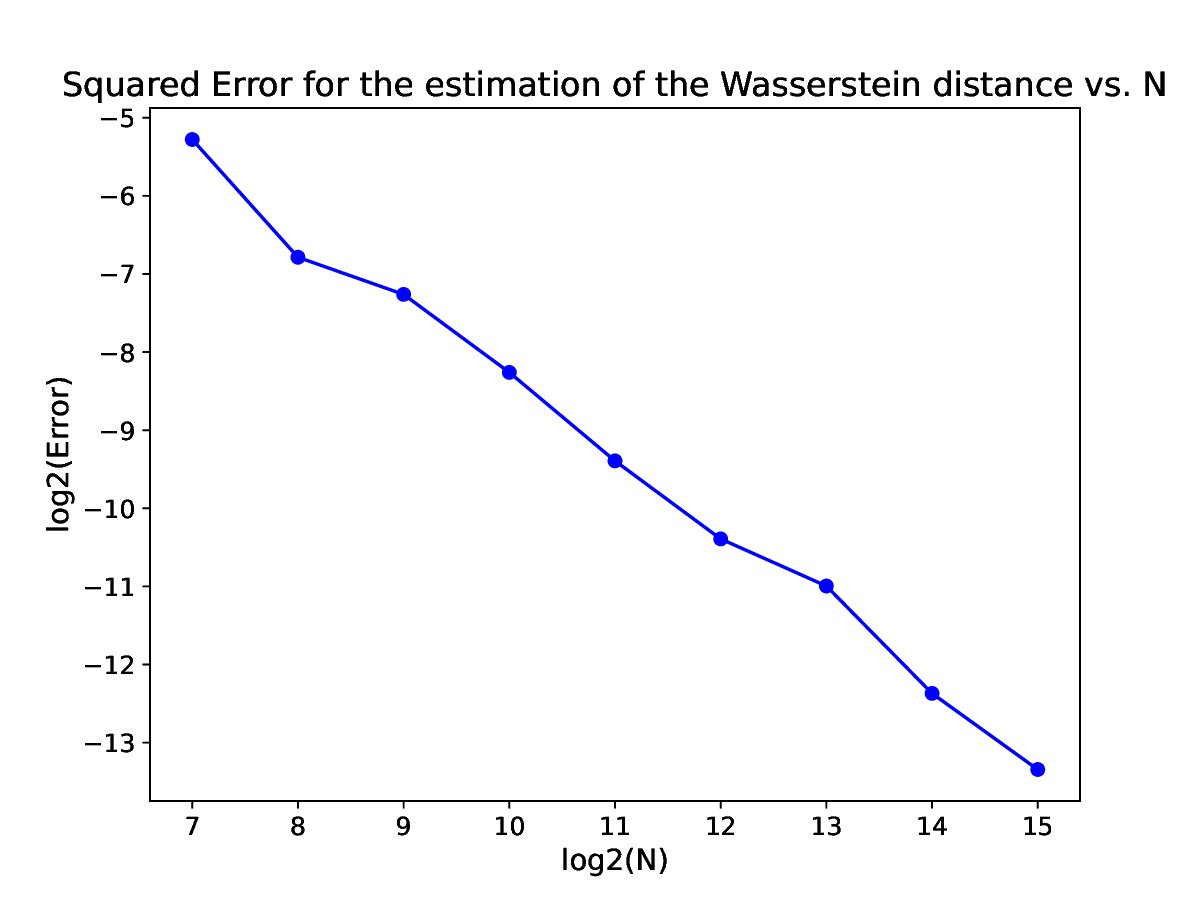}
        \caption{}
        \label{fig:OU_M2000}
    \end{minipage}
    \hspace{-0.2cm}
    \begin{minipage}[c][0.3\textheight][c]{0.35\textwidth}
        \raggedright
        \small
        \textbf{Figure 1.} $\log_2-\log_2$ scale results obtained with the choice of a fixed value $M = 2000$ and $N$ ranging from $2^7$ to $2^{15}$. A linear regression of the data gives  $y = -0.977 \, x +	1.40$.
    \end{minipage}
    
    \vskip\baselineskip 

        \begin{minipage}[c][0.3\textheight][c]{0.61\textwidth}
        \centering
        \includegraphics[width=\textwidth]{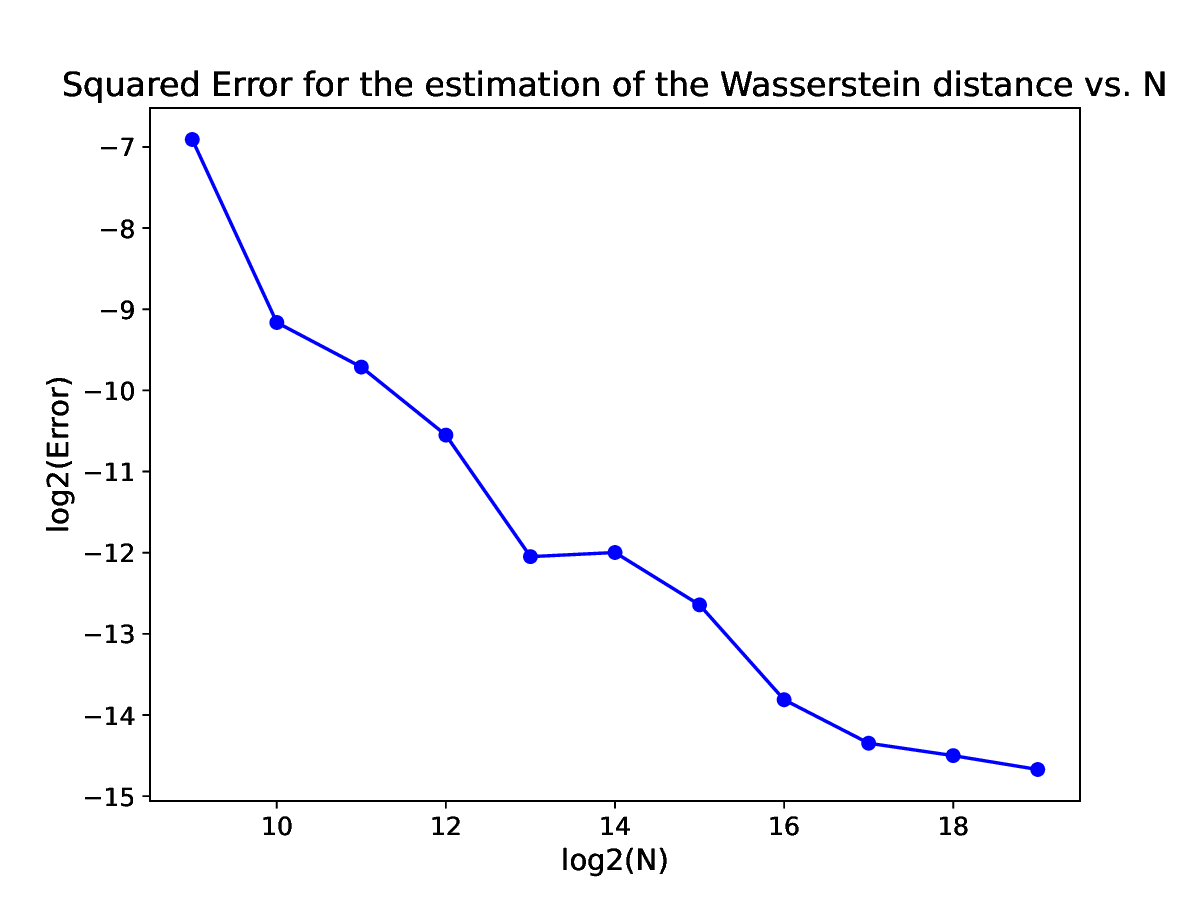}
        \caption{}
        \label{fig:OU_M100}
    \end{minipage}
    \hspace{-0.2cm}
    \begin{minipage}[c][0.3\textheight][c]{0.35\textwidth}
        \raggedright
        \small
        \textbf{Figure 2.} $\log_2-\log_2$ scale results obtained with the choice of a fixed value $M = 100$ and $N$ ranging from $2^9$ to $2^{19}$. A linear regression of the data gives  $y = -0.738 \, x	-1.52$.
    \end{minipage}
    
    \vskip\baselineskip 

    \begin{minipage}[c][0.3\textheight][c]{0.61\textwidth}
        \centering
        \includegraphics[width=\textwidth]{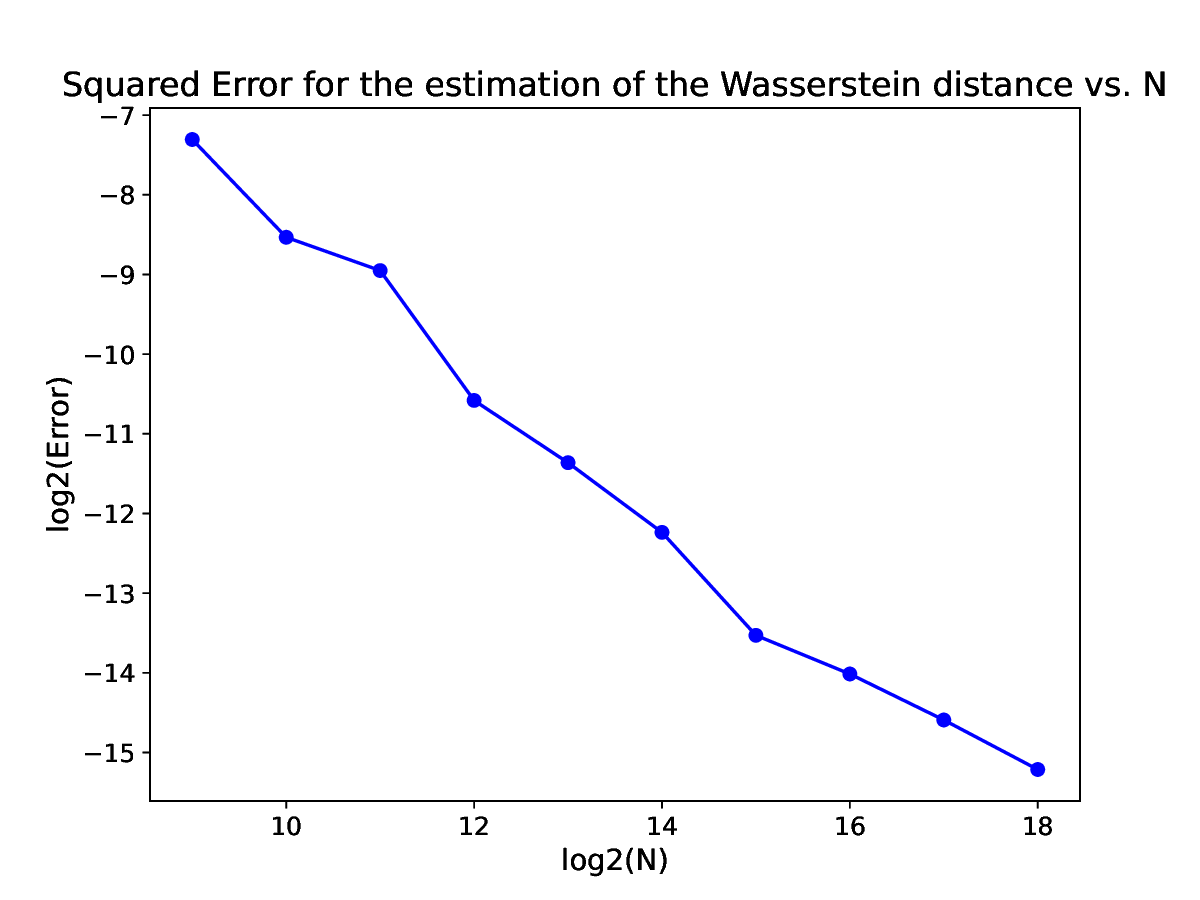}
        \caption{}
        \label{fig:OU_N55}
    \end{minipage}
    \hspace{-0.2cm}
    \begin{minipage}[c][0.3\textheight][c]{0.35\textwidth}
        \raggedright
        \small
        \textbf{Figure 3.}  $\log_2-\log_2$ scale results obtained with the choice $M = N^{0.55}$ and $N$ ranging from $2^9$ to $2^{18}$. A linear regression of the data gives $y = -0.901 \, x + 0.528$.
    \end{minipage}
\end{figure}

\subsection{Application: A neural mass model with intrinsic excitability}
\label{subsec:neural}

\subsubsection{Motivation and theoretical results}

We introduce an extended version of the microscopic system leading, in the mean-field limit, to Jansen and Rit's model \cite{Jansen1995}, in the form of the equations given by Faugeras-Touboul-Cessac \cite{Faugeras2009}. This neural mass model (NMM) includes three different neuron populations and is used to get a deeper understanding of visual cortical signals, more specifically of the emergence of oscillations in the electrical activity of the brain registered by an electroencephalogram after a stimulation of a sensory pathway. The three populations are organised as follows: the pyramidal population, thereafter numbered $1$, the excitatory feedback population, indexed by $2$, and the inhibitory interneuron population, indexed by $3$. More details on the model can be found in \cite{Faugeras2009}, see in particular their Figure 2 for a graphical representation.

At the level of the particle system, given a time horizon $T > 0$ and a number $N_j \in \mathbb{N}^*$ of neurons in population $j$, the equations for the potential of the neuron $i$ in population $j$ of \cite{Faugeras2009} take the form
\begin{align}
	\label{eq:Jansen_Rit_particle}
	\d V_{j,i}(t) = - \frac1{\tau_j} V_{j,i}(t) \d t + \Big( \sum_{k=1}^3 \sum_{\ell = 1}^{N_k} \bar J_{j,k} S\big(V_{k,\ell}(t) \big) + I_j(t) \Big) \d t + f_j(t) \d W^{j,i}_t, 
\end{align}
for $t \in [0,T]$, where the first drift term corresponds to a modulation of the exchanges with time.

An important effect for the visual cortex is the so-called potentiation due to intrinsic excitability \cite{Cudmore_2004}: depending on its previous behavior, the sensibility of a neuron to incoming signals can vary. When the neuron was previously highly active, it reaches an excitability state in which incoming signals are magnified. To model this feature, we enrich the coefficients $\bar J_{j,k}$, constant in \eqref{eq:Jansen_Rit_particle}, by including a path-dependent function of the trajectory of the neuron at hand. As a second extension, we include a delay in the signal received by the neurons of population $j$ from the neurons of population $k$. To simplify, we consider the same delay $\triangle$ in each population, but our setting could easily adapt to treat a delay depending on the population (up to a straightforward modification of the initial data). 

We thus consider  
\begin{align}
\label{eq:def_j}
\tau_1 = \tau_2 > 0, \quad \tau_3 > 0, \quad \bar J_{i,j} = \frac{J_{i,j}}{N_j} \qquad \hbox{with } J = \begin{pmatrix}
		J_{1,1} & J_{1,2} & J_{1,3} \\
		J_{2,1} & J_{2,2} & 0 \\
		J_{3,1} & 0 & J_{3,3}
	\end{pmatrix},
\end{align}
where $J_{1,2}, J_{1,3}, J_{2,1}, J_{3,1}$ and $(J_{i,i})_{1 \le i \le 3}$ are functions of $[0,T] \times \mathcal{C}([0,T], \R^d)$ given by
\begin{align} 
	\label{eq:Jij}
	J_{i,j}\Big( t, (\alpha_s)_{s \in [0,T]} \Big) = D_{i,j} \Big(1 + \varepsilon \int_0^t \varphi(\alpha_s) \d s \Big),
\end{align}
where for all $(i,j) \in \{(1,2), (1,3), (2,1), (3,1) \}$, $D_{i,j}$ are fixed constants and $\varepsilon$ is a small parameter modulating the rate-based plasticity \cite[Section 6.6]{Roth_2009}. The function $\varphi$ is assumed to be bounded and Lipschitz continuous from $\R^d$ to $\R$.

Note that this extends the model of \cite{Faugeras2009} and that our hypotheses allow for any choice of $J_{i,j}$ that are regular enough (see Assumption II). The setting \eqref{eq:Jij} should be thought of as a toy model illustrating our ability to take into account the effect of the potential trajectory on the postsynaptic strengths. We mention that the justification of neural mass models from the microscopic dynamics is a challenging topic in computational neuroscience \cite{Deschle_2021}.

Ultimately the following microscopic system is considered, for $j \in \{1,2,3\}$, $i \in \{1,\dots, N_j\}$ and $t \in [\triangle,T]$
\begin{align}
	\label{eq:extended_Jansen_Rit_particle}
	\d V_{j,i}(t) &= - \frac1{\tau_j} V_{j,i}(t) \d t + \Big( \sum_{k=1}^3 \sum_{\ell = 1}^{N_k} \bar J_{j,k} \Big(t, \big(V_{k,\ell}(\cdot)\big)_{\cdot \wedge t} \Big) S\big(V_{k,\ell}(t-\triangle) \big) + I_j(t) \Big) \d t \\
    & \qquad + f_j(t) \d W^{j,i}_t. \nonumber
\end{align}
The functions $I_j, f_j$ from $\R_+$ to $\R$ are assumed to be Lipschitz continuous. The Brownian motions $(W^{j,i}_t)_{t \ge 0}$ for $\{(j,i): j \in \{1,2,3\}, i \in \{1,\dots, N_j\} \}$ are assumed to be mutually independent and independent of initial data, and the function $S$ is given on $\R$ by 
\begin{align}
	\label{eq:def_S} S(v) = \frac{v_{m}}{1 + e^{r(v_0 - v)}},
\end{align}
with $r > 0$ and $0 < v_0 < v_{m}$. Note that this function is bounded and Lipschitz continuous with Lipschitz constant $v_m r$. Initial data are given trajectories $(V_{j,i}(s))_{s \in [0,\triangle]}$ for all $1 \le j \le 3$ and $1 \le i \le N_j$. 

\medskip 

As the number of particles in each population grows to infinity, it is natural to expect the system to be described by the following system of three path-dependent McKean-Vlasov equations. Write $\mu^j_t$ for the distribution of the potential of population $j \in \{1,2,3\}$ at time $t$ in $[0,T]$. In the  mean-field limit, we obtain the following system set on $[\triangle,T]$, for $j \in \{1,2,3\}$,
\begin{align}
	\label{eq:Jansen_Rit_mean_field}
	\left\{ 
	\begin{array}{ll}
		&\d \bar V_j(t) \!=\! \Big\{ - \frac1{\tau_j}\bar V_j(t) + \sum_{k=1}^3 D_{j,k} \Big( 1 + \varepsilon \int_0^t \varphi\big(\bar V_k(u)\big) \d u \Big) \int_{\R} S(y) \mu^k_{t-\triangle}(\d y) \Big\} \d t \\
  &\qquad \qquad \qquad + I_j(t) \, \d t +  f_j(t) \, \d W_t^j 
		\\
		&\bar V_j(t) \sim \mu^j_t, \quad t \in [0,T],
	\end{array}
	\right.
\end{align} 
where $(W^1, W^2, W^3)$ are three independent Brownian motions. 
We summarize those assumptions as follow:

\noindent \textbf{Assumption 2.1} 
\begin{enumerate}
    \item $ T >  \triangle$;
	\item For $s \in [0, \triangle]$, fix $\bar V_j(s) =  \bar V_j(0)$ (leading to $\mu^j_s = \delta_{V_j(0)}$, $s \in [0, \triangle]$ in \eqref{eq:Jansen_Rit_mean_field}); 
	\item the functions $I_j , f_j: \R_+ \to \R$ are Lipschitz continuous;
	\item the function $S$ is given by \eqref{eq:def_S};
	\item the function $\varphi$ appearing in the definition of $J_{i,j}$ in \eqref{eq:Jij} is bounded, Lipschitz continuous from $\R^d$ to $\R$.
\end{enumerate}

The following proposition, whose proof can be found in Appendix \ref{proofsubsec:appli}, shows that the model \eqref{eq:Jansen_Rit_mean_field} fits our setting. 

\begin{prop}\label{prop:ex1}
	Under Assumption 2.1, the system \eqref{eq:Jansen_Rit_mean_field} satisfies Assumption (II) with $p=2$ and $\gamma = 1$.
\end{prop}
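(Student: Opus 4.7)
The plan is to recast \eqref{eq:Jansen_Rit_mean_field} as an instance of \eqref{eq:path-dependent_McKean} with ambient dimension $d=q=3$, vector state $X_t = (\bar V_1(t), \bar V_2(t), \bar V_3(t))$, driving Brownian motion $(W^1_t, W^2_t, W^3_t)$ and deterministic initial data $X_0 = 0$, read off the coefficients $b, \sigma$ and check Assumptions (I) and (II) term by term. Differentiating \eqref{eq:Jansen_Rit_mean_field} in $t$ (the kernel $e^{-(t-s)/\tau_j}$ is smooth in $t$) one recovers the SDE
\begin{align*}
    \d \bar V_j(t) = \Big[ -\tfrac{1}{\tau_j} \bar V_j(t) + \sum_{k=1}^3 D_{j,k} \Big(1 + \varepsilon \int_0^t \varphi(\bar V_j(u)) \, \d u \Big) \big\langle S, \mu^k_{(t-\triangle)_+} \big\rangle + I_j(t) \Big] \d t + f_j(t) \, \d W^j_t,
\end{align*}
with $\langle S, \nu\rangle := \int_{\R} S(y) \, \nu(\d y)$ and using the convention $\mu^k_s := \delta_0$ for $s \leq 0$ inherited from Assumption 2.1. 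Reading off the coefficients, for $\alpha \in \CRD$ with $d = 3$ and $\nu = (\nu_t)_{t\in[0,T]} \in \CPP$ (writing $\nu^k_t$ for the $k$-th marginal of $\nu_t$), one obtains
\begin{align*}
    b_j(t, \alpha, \nu) &= -\tfrac{1}{\tau_j} \alpha_j(t) + \sum_{k=1}^3 D_{j,k} \Big(1 + \varepsilon \int_0^t \varphi(\alpha_j(u)) \d u \Big) \big\langle S, \nu^k_{(t-\triangle)_+} \big\rangle + I_j(t), \\
    \sigma(t) &= \operatorname{diag}\big(f_1(t), f_2(t), f_3(t)\big),
\end{align*}
the latter being independent of $(\alpha, \nu)$.

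For Assumption (I), $X_0 = 0 \in L^p$ is immediate. Continuity of $b$ in $t$ follows from the continuity of $\alpha_j$, of $I_j$, of the primitive $t \mapsto \int_0^t \varphi(\alpha_j(u))\d u$, and of $t \mapsto \nu^k_{(t-\triangle)_+}$ in $\mathcal{W}_p$, combined with the continuity and boundedness of $S$; likewise for $\sigma$. For the Lipschitz-in-$(\alpha, \nu)$ estimate with respect to $\|\cdot\|_{\sup}$ and $d_p$, I would split the product $J_{j,k}(t, \alpha_j) \cdot \langle S, \nu^k_{(t-\triangle)_+}\rangle$, where $J_{j,k}(t, \alpha_j) := D_{j,k}(1 + \varepsilon \int_0^t \varphi(\alpha_j(u))\d u)$, via the identity $A_1 B_1 - A_2 B_2 = (A_1 - A_2) B_1 + A_2 (B_1 - B_2)$. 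The factor $J_{j,k}$ is bounded by $D_{j,k}(1 + \varepsilon T \|\varphi\|_\infty)$ and $\varepsilon T D_{j,k} \operatorname{Lip}(\varphi)$-Lipschitz in $\alpha$, while $\langle S, \nu^k \rangle$ is bounded by $v_m$ and $v_m r$-Lipschitz in $\nu^k$ with respect to $\mathcal{W}_1 \leq \mathcal{W}_p$ by Kantorovich--Rubinstein duality. Combined with the $\tau_j^{-1}$-Lipschitz bound on $\alpha \mapsto -\alpha_j(t)/\tau_j$, this delivers the Lipschitz estimate for $b$ with a constant depending only on the model parameters; $\sigma$ is trivially Lipschitz in $(\alpha, \nu)$ since it has no such dependence.

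For Assumption (II), I would take $\gamma = 1$ and assemble the H\"older-in-$t$ estimate term by term. The \emph{explicit} $t$-dependent contributions---$I_j(t)$, $f_j(t)$ (Lipschitz by Assumption 2.1) and the primitive $\int_0^t \varphi(\alpha_j(u))\d u$ (Lipschitz in $t$ with constant $\|\varphi\|_\infty$)---produce H\"older estimates whose constants do not depend on $(\alpha, \nu)$, and $\sigma(t)$ is Lipschitz in $t$ with constant independent of $(\alpha, \nu)$. The main obstacle I anticipate is the \emph{implicit} $t$-dependence of $b$ through the path evaluation $\alpha_j(t)$ in the leak term $-\alpha_j(t)/\tau_j$ and through the shifted marginal $\nu^k_{(t-\triangle)_+}$ in the distributional term: since a generic element of $\CRD$ or of $\CPP$ carries no uniform H\"older modulus, these two pieces are a priori not H\"older in $|t-s|$ and must be handled by exploiting the specific structure of Assumption (II)---in particular the allowed linear growth of the H\"older constant in $\|\alpha\|_{\sup}$ and in $\sup_{t} \mathcal{W}_p(\nu_t, \delta_0)$---together with the boundedness of $S$ by $v_m$ (so that $|\langle S, \nu^k_{(t-\triangle)_+}\rangle - \langle S, \nu^k_{(s-\triangle)_+}\rangle| \leq 2 v_m$) and the $v_m r$-Lipschitz dependence of $\langle S, \cdot\rangle$ on the underlying measure. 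Combining these contributions with the explicit-$t$ ones via the triangle inequality yields the required H\"older bound with constant of the form $L(1 + \|\alpha\|_{\sup} + \sup_t \mathcal{W}_p(\nu_t, \delta_0))$, concluding the verification.
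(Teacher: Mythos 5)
Your reformulation and verification of Assumption (I) are correct and follow the same route as the paper: recast the dynamics as a $3$-dimensional path-dependent McKean--Vlasov equation, factor the nonlinear drift into a product $H_k(t,\alpha)\cdot L_k(t,\nu)$ of bounded Lipschitz functions, and read off the Lipschitz constants using boundedness of $\varphi$ and $S$ together with Kantorovich--Rubinstein duality; the paper's appendix proof of Proposition \ref{prop:ex1_2} does exactly this.

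For Assumption (II), there are two gaps. First, you parameterize the leak term as $-\alpha_j(t)/\tau_j$. This introduces an implicit $t$-dependence that cannot be H\"older-controlled uniformly over $\alpha \in \mathcal{C}([0,T],\R^3)$: for a highly oscillatory $\alpha$ with $\|\alpha\|_{\sup}=1$ one can have $|\alpha_j(t)-\alpha_j(s)|$ bounded away from zero while the right-hand side of Assumption (II), namely $L\bigl(1+\|\alpha\|_{\sup}+\sup_u \mathcal{W}_p(\nu_u,\delta_0)\bigr)|t-s|^{\gamma}$, tends to zero. The paper sidesteps this by writing the coefficient as $-(\alpha_T)_j/\tau_j$, which agrees with $X_t$ along stopped paths $\alpha = X_{\cdot\wedge t}$ but carries no explicit $t$-dependence, so this piece contributes nothing to the H\"older bound. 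Second, you correctly flag the shifted marginal $\nu^k_{(t-\triangle)_+}$ as the real obstacle, but the resolution you sketch does not close the argument. The linear-growth allowance in Assumption (II) only rescales the prefactor $L\bigl(1+\|\alpha\|_{\sup}+\sup_u \mathcal{W}_p(\nu_u,\delta_0)\bigr)$; it cannot manufacture the missing factor $|t-s|^{\gamma}$. The bound $|\langle S,\nu^k_{(t-\triangle)_+}\rangle - \langle S,\nu^k_{(s-\triangle)_+}\rangle| \le 2v_m$ is constant in $|t-s|$, and the alternative bound $v_m r\,\mathcal{W}_1\bigl(\nu^k_{(t-\triangle)_+},\nu^k_{(s-\triangle)_+}\bigr)$ is uncontrolled for generic $(\nu_t)\in\CPP$. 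So as written, the verification of Assumption (II) is not complete; you would need a genuine additional idea to handle the delay $\triangle>0$, and gesturing at ``the specific structure of Assumption (II)'' does not supply one.
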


This provides a proof of well-posedness on finite time $[0,T]$ for any $T > 0$ for this upgraded version of the model treated in \cite{Faugeras2009}. In addition, Proposition \ref{prop:ex1} induces
a moment propagation result, in the sense that, still letting $\bar V_j(s) = V_j(0)$ for all $s \in [0,\triangle]$, if $\bar V_j(0) \in L^p$, $p\geq2$ for $j \in \{1,2,3\}$, then $\bar V_j(t) \in L^p$  at all time $t \in [0,T]$. 
Proposition \ref{prop:ex1} also provides a justification for the derivation of \eqref{eq:Jansen_Rit_mean_field} from the particle system \eqref{eq:Jansen_Rit_particle}. More precisely, letting for all $s \in [0, T]$, $\tilde \mu_s = \mu_s^1 \otimes \mu_s^2 \otimes \mu_s^3$, the following proposition is a direct result of Theorem \ref{thm:well-posedness}, Theorem \ref{thm:global_thm_chaos} and Proposition \ref{prop:ex1}. 

\begin{prop}
	\label{prop:ex1_2}
	Let $N \in \mathbb{N}^*$ and assume that $N_1 = N_2 = N_3 = N$. Assume that for all $i \in \{1,\dots, N\}$ and for all $s \in [-\triangle, 0]$, $(V_{1,i}(s), V_{2,i}(s), V_{3,i}(s)) = 0_{\R^3}$.  Under Assumption 2.1, the particle system \eqref{eq:Jansen_Rit_particle} is well-defined. Moreover, defining  $\mu^N_t := \frac1{N} \sum_{i=1}^N \delta_{(V_{1,i}(t), V_{2,i}(t), V_{3,i}(t))}$, we have 
	\begin{align*}
		\Big\| \sup_{t \in [0,T]} \mathcal{W}_p \big( \tilde \mu_t, \mu^N_t \big) \Big\|_p \,  \underset{N \to \infty}{\longrightarrow} 0.
	\end{align*}
\end{prop}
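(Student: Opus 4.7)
The plan is to cast the three-population particle system \eqref{eq:extended_Jansen_Rit_particle} and its mean-field limit \eqref{eq:Jansen_Rit_mean_field} as, respectively, the particle system \eqref{eq:particle_system_intro} and the path-dependent McKean-Vlasov equation \eqref{eq:path-dependent_McKean} in $\R^3$, and then to apply Theorem \ref{thm:global_thm_chaos}. Proposition \ref{prop:ex1} does the analytical heavy lifting of verifying Assumptions (I) and (II); the bulk of the present work is the reformulation.

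First, I introduce the super-particle $X^{i,N}(t) := \big(V_{1,i}(t), V_{2,i}(t), V_{3,i}(t)\big) \in \R^3$ for each $i \in \{1,\dots,N\}$, driven by the three-dimensional Brownian motion $\bar W^i := (W^{1,i}, W^{2,i}, W^{3,i})$, the $\bar W^i$'s being independent across $i$. Using the normalization $\bar J_{j,k} = J_{j,k}/N$ together with the assumption $N_1 = N_2 = N_3 = N$, the interaction sum in \eqref{eq:extended_Jansen_Rit_particle} rewrites as
\begin{align*}
\sum_{\ell=1}^{N} \bar J_{j,k}\big(t, (V_{j,i})_{\cdot \wedge t}\big) S\big(V_{k,\ell}(t-\triangle)\big) = J_{j,k}\big(t, (V_{j,i})_{\cdot \wedge t}\big) \int_{\R^3} S\big(\pi_k(z)\big) \, \mu^N_{t-\triangle}(\d z),
\end{align*}
where $\mu^N_s := \frac{1}{N} \sum_{i=1}^N \delta_{X^{i,N}(s)}$ and $\pi_k: \R^3 \to \R$ is the $k$-th coordinate projection, with the convention $\mu^N_s := \delta_{0_{\R^3}}$ for $s \in [-\triangle, 0]$ (and the analogous convention for $\mathcal{L}(X(s))$ in the limit system). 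From this, I define the drift $b$ and diffusion $\sigma(t, \cdot, \cdot) = \mathrm{diag}\big(f_1(t), f_2(t), f_3(t)\big)$ on $[0,T] \times \mathcal{C}([0,T], \R^3) \times \mathcal{C}([0,T], \mathcal{P}_p(\R^3))$ in such a way that \eqref{eq:extended_Jansen_Rit_particle} coincides with \eqref{eq:particle_system_intro} and the SDE form of \eqref{eq:Jansen_Rit_mean_field} coincides with \eqref{eq:path-dependent_McKean} for $X(t) := (\bar V_1(t), \bar V_2(t), \bar V_3(t))$.

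Proposition \ref{prop:ex1} then confirms that Assumptions (I) and (II) hold for this reformulation, so that Theorem \ref{exisuniq} yields strong well-posedness of $X$ in $L^p$ and Proposition \ref{prop:particle_system} yields well-posedness of the super-particle system. Since the coefficient of the $j$-th equation depends on the measure argument only through the marginals $\mu^k$, which are deterministic functions of time, and since $(W^1, W^2, W^3)$ are independent with deterministic initial data, the components $\bar V_1, \bar V_2, \bar V_3$ are independent and the joint law is $\mathcal{L}(X(t)) = \tilde \mu_t = \mu^1_t \otimes \mu^2_t \otimes \mu^3_t$. The conclusion follows directly from Theorem \ref{thm:global_thm_chaos}(1):
\begin{align*}
\Big\| \sup_{t \in [0,T]} \mathcal{W}_p(\tilde \mu_t, \mu^N_t) \Big\|_p \le C_{d,p,L,T} \big\| \mathbb{W}_p\big(\mathcal{L}(X), \nu^N\big) \big\|_p \underset{N \to \infty}{\longrightarrow} 0,
\end{align*}
where $\nu^N := \frac{1}{N} \sum_{i=1}^N \delta_{Y^i}$ is the empirical measure of $N$ i.i.d. copies of $X$, and the right-hand side vanishes by the standard Wasserstein convergence of i.i.d. empirical measures. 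The main obstacle is the careful bookkeeping to encode both the delay $\triangle$ and the dependence on the coordinate marginals of the joint law into the abstract framework, so as to ensure that the rewritten drift is a measurable function of $(\mu_s)_{s\in[0,T]}$ for which Proposition \ref{prop:ex1} actually applies.
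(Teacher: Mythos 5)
Your proposal is correct and follows essentially the same route as the paper: vectorize the three populations into an $\R^3$-valued particle system of the form \eqref{eq:particle_system_intro}, verify that the resulting coefficients satisfy Assumptions (I) and (II) (which is the content of Proposition~\ref{prop:ex1}), and conclude via Theorem~\ref{thm:global_thm_chaos}. You additionally make explicit the identification of the law of the vectorized limit with $\tilde\mu_t = \mu^1_t\otimes\mu^2_t\otimes\mu^3_t$ through the independence of the three coordinates (independent driving Brownian motions, deterministic initial data, coefficients depending only on the deterministic one-dimensional marginals), a point the paper's proof leaves implicit.
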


\subsubsection{Numerical results}

We turn to our numerical simulations. Our choices for the parameters and functions appearing in \eqref{eq:Jansen_Rit_mean_field} are displayed in Table \ref{tab:parameters}.

\begin{table}[ht]
\centering
\renewcommand{\arraystretch}{2}
\begin{tabular}{|l|l|l|} 
\hline
$\varphi(x) = e^{-|x|}$ on $\mathbb{R}$ & $f_j \equiv 1$ for all $j$  & $S(v) = \frac{10}{1 + e^{(1-v)}}$ for all $v \in \R$\\  \hline
 $I_j \equiv 0$ & $\varepsilon = 0.1$  &$\tau = (\tau_j)_{1\le j\le 3}$ with $\tau = (1,1,1)$\\ \hline
$T = 1$ & $D_{1,2} = D_{1,3} = 1$, &$D_{i,i} = 1$ for all $i \in \{1,2,3\}$ \\ \hline
$D_{2,1} = 5$ &$D_{3,1} = -1$  & $D_{2,3} = D_{3,2} = 0$\\ \hline
\end{tabular}
\caption{Parameters values and functions for the simulation of \eqref{eq:Jansen_Rit_mean_field}}
\label{tab:parameters}
\end{table}


We note that we considered $\varepsilon$ to be a fixed positive constant in our simulations, but a population-dependent coefficient $\varepsilon_{i,j}$ (possibly negative) can also be used.
In the setting of Table \ref{tab:parameters}, the model writes
\begin{align*}
\left\{
	\begin{array}{ll}
&\d \bar V_j(t) = \Big\{-V_j(t) + \sum_{k=1}^3 D_{j,k} \Big( 1 + 0.1 \int_0^t e^{-|V_k(u)|} \d u \Big) \int_{\mathbb{R}} \frac{10}{1+e^{1-y}} \mu^k_{t-\triangle}(\d y) \Big\} \d t +  \d W^j_t, \\
		&\bar V_j(t) \sim \mu^j_t, \quad t \in [0,T].
		\end{array}
		\right.
  \end{align*}

We take $M=450$ and $N = 2^j$ for $j \in \{7, \ldots,16\}$. The choice of  $M$ satisfies $M > N^{\frac12 + 0.1}$ for all choices of $N$ considered, which, according to our conjecture from Remark \ref{rem_2}, should guarantee that the rate of convergence is only limited by $N$. To challenge our numerical results, we face two main obstacles:
\begin{enumerate}
    \item We do not have access to the true distribution of our model, to which we may compare our approximation. To overcome this issue, we use the simulation with $2^{16}$ particles as a proxy of this true distribution. 
    \item This model is set in dimension 3, rendering the formula \eqref{eq:formula_Wasserstein} inapplicable when considering all coordinates together. 
\end{enumerate}
To solve the second issue mentioned above, we consider two numerical results. In the first one, we compute an approximation of the square of the  Wasserstein distance with respect to the simulation with $2^{16}$ particles \textit{coordinates by coordinates}, using a similar estimate to \eqref{eq:formula_error}, as all of those are one-dimensional. The results are displayed on Figure \ref{fig:Wasserstein_Neural}. As the Wasserstein approximation involves a discretization error, see \eqref{eq:formula_error}, we consider a truncation parameter $\epsilon = 10^{-6}$, and display also the error for $N = 2^{16}$, which quantifies the truncation error. We observe a consistent decay with the rise of $N$. Slopes of each line presented here between $2^{7}$ and $2^{15}$ range between $-3.77$ (coordinate 1) and $-3.26$ (coordinate 3).

\begin{figure}[htbp]

\caption*{Figures 4-5: Numerical results for the model \eqref{eq:Jansen_Rit_mean_field}.}

\vspace{1cm}

    \centering
    \begin{minipage}[c][0.3\textheight][c]{0.67\textwidth}
        \centering
        \includegraphics[width=\textwidth]{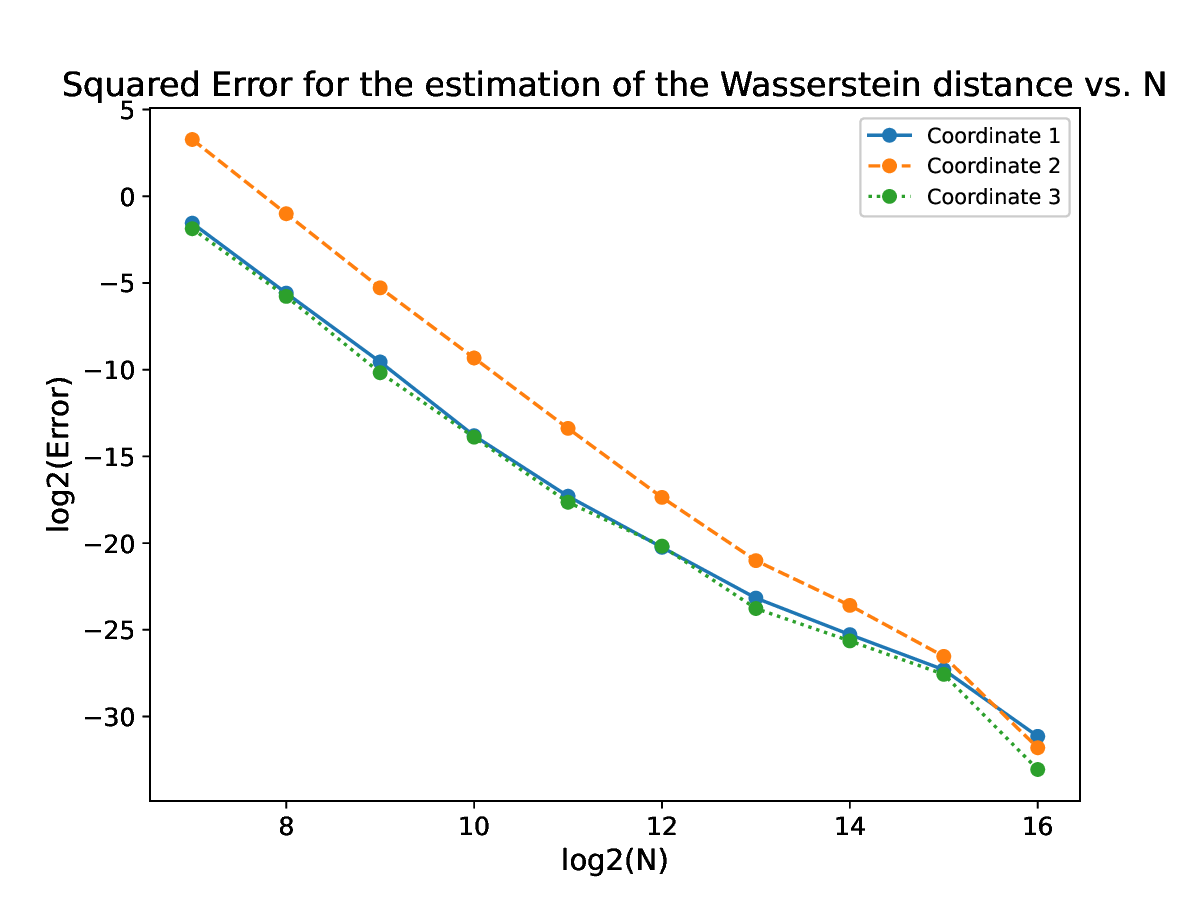}
        \caption{}
        \label{fig:Wasserstein_Neural}
    \end{minipage}
    \hfill
    \begin{minipage}[c][0.3\textheight][c]{0.32\textwidth}
        \raggedright
        \small
        \textbf{Figure 4.} $\log_2-\log_2$ scale estimation of the Wasserstein distance for each coordinate. Simulations were performed with a fixed value $M = 450$ and $N$ ranging from $2^7$ to $2^{16}$. Linear regression of the data gives $y = -3.26 \, x + 20.0$ (coordinate $1$), $y = -3.77 \, x + 28.8$ (coordinate $2$) and $y = -3.26 \, x + 19.6$ (coordinate 3).
    \end{minipage}

    \vskip\baselineskip 
    \vspace{2cm} 

        \begin{minipage}[c][0.3\textheight][c]{0.67\textwidth}
        \centering
        \includegraphics[width=\textwidth]{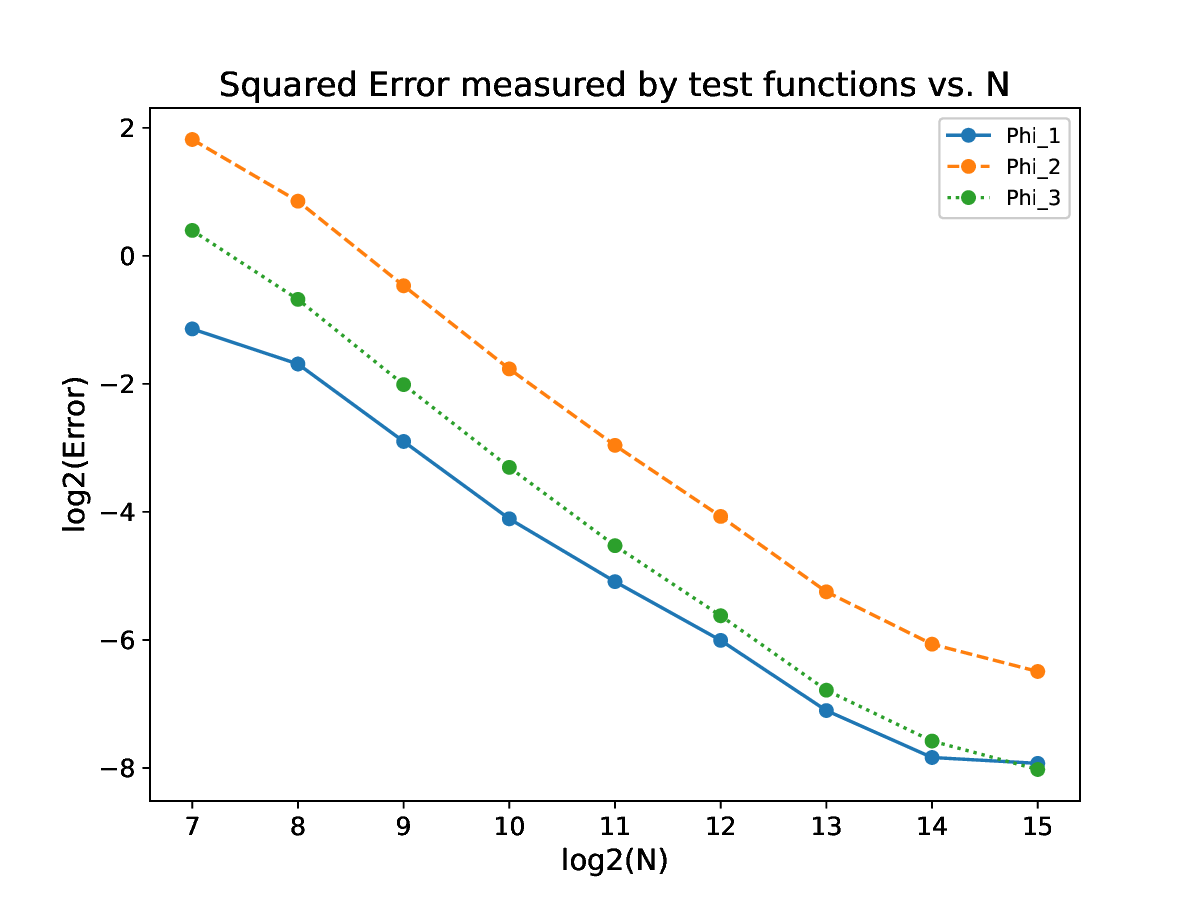}
        \caption{}
        \label{fig:Neural_test_function}
    \end{minipage}
    \hfill
    \begin{minipage}[c][0.3\textheight][c]{0.32\textwidth}
        \raggedright
        \small
        \textbf{Figure 5.} $\log_2-\log_2$ scale estimation via three test functions. Simulations were performed with a fixed value $M = 450$ and $N$ ranging from $2^7$ to $2^{16}$. Linear regression of the data gives $y = -0.93 \, x + 5.38$ ($\Phi_1$), $y = -1.10 \, x + 9.36$ ($\Phi_2$) and $y = -1.10\, x + 	7.91$ ($\Phi_3$).
    \end{minipage}
    
\end{figure}

To estimate the global Wasserstein distance (that is, involving all three coordinates at once), we use a test function method. More precisely, we rely on the fact that the $L^2$-Wasserstein distance defines, in any finite dimension $d$, the same topology as the $2$-Zolotarev distance $d_Z$ \cite[Proposition 1]{Belili_2000}, where $d_Z$ is defined as the distance between measures $\mu, \nu$ such that
\[ d_Z(\mu,\nu) := \sup \Big\{ \int_{\R^d} g(x) (\mu - \nu)(\d x) : g \in C^2_b(\R^d), g'(0) = 0, \|g''\|_{L^\infty(\R^d)} = 1 \Big\} .\]
Moreover, for two measures $\mu, \nu$ on $\R^d$, we recall from \cite[Theorem 2]{Belili_2000} the inequality
\[ \mathcal{W}_2^2(\mu,\nu) \le 8 d_Z(\mu, \nu) \]
holds.
We introduce now the test functions defined for all $x \in \R^d$ by
\[ \Phi_1(x) = e^{-\frac1{1-|x|^2}}, \, \qquad \Phi_2(x) = e^{-\frac{|x|^2}{2}}, \, \qquad \Phi_3(x) = \frac1{1+e^{-|x|}}. \] 
For $(\mu_t)_{t \in [0,T]}$ the true distribution of the model and $\hat \mu^N_t = \frac1N \sum_{i=1}^N \delta_{X^i_t}$, $t \in [0,T]$, the empirical distribution obtained through our simulations, for all $\ell \in \{1,2,3\}$, we thus have
\[ d_Z(\hat \mu^N_t, \mu_t) \ge \frac1{N} \sum_{i=1}^N \Phi_\ell(X_t^i) - \int_{\R^d} \Phi_\ell(x) \mu_t(\d x). \]
Hence, we introduce, for $N_1 \in \{2^7, \ldots, 2^{15}\}$, $\ell \in \{1,2,3\}, N_f = 2^{16}$, $N_{MC} \in \mathbb{N}^*$, the estimator $\hat E_N$ given by
\begin{align*}
    \hat E_N(N_1, \ell, N_{MC}) = 8 \frac{1}{N_{MC}} \sum_{j=1}^{N_{MC}} \Big| \frac1{N_1} \sum_{k=1}^{N_1} \Phi_\ell(X^{N_1}_T) - \frac1{N_f} \sum_{k=1}^{N_f} \Phi_\ell(X^{N_f}_T) \Big|.
\end{align*}
The results are displayed in Figure \ref{fig:Neural_test_function}. We observe, for all test functions considered, a steady convergence for smaller values of $N$, although the rate of convergence seems slower for values larger than $2^{12}$, while the error itself is still important, being of order $2^{-9}$ instead of $2^{-26}$ for the previous estimation based on coordinates. Most likely, this is due to the fact that our choices of test functions do not approximate well the Zolotarev distance $d_Z$.


\section{Proof for the convergence rate of the particle method}\label{sec:numericalcvg}


In Section \ref{subsec:prop_im}, we gather several preliminary results that will be used for the proof of Theorem \ref{thm:particlemethod}.  Next, in Section \ref{sec:Euler_scheme}, we study the convergence of the interpolated Euler scheme \eqref{eq:discretescheme} and of its continuous counterpart \eqref{eq:def_continuous_2}. Finally, Section \ref{subsec:cvgparticlemethod} is devoted to the proof of Theorem \ref{thm:particlemethod}.

\subsection{Preliminary results}\label{subsec:prop_im}

In this subsection, we introduce the properties of the interpolator $i_m$ and several preliminary results essential for establishing Theorem \ref{thm:particlemethod}. The detailed proofs of the lemmas presented here can be found in Appendix \ref{proofsubsec:prop_im}.  For any $t\in[0, T]$, we define $\pi_{t}: \CRD\rightarrow\mathbb{R}^{d}$ by 
\begin{equation}\label{eq:pi_coordinate}
\alpha\mapsto\pi_{t}(\alpha)=\alpha_{t}.
\end{equation} The following lemma, and its proof, can be found in \cite[Lemmas 5.1.2 and 5.1.3]{Liu_PhD}.

\begin{lem}\label{injectionmeasure}
The application $\iota: \PPC \rightarrow \CPP$ defined by \[\mu\mapsto \iota(\mu)=(\mu\circ\pi_{t}^{-1})_{t\in[0, T]}=(\mu_{t})_{t\in[0, T]}\] is well-defined and $1$-Lipschitz continuous. 
\end{lem}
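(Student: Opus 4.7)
The plan splits naturally into two parts: showing that $\iota$ is well-defined, i.e.\ that $t\mapsto \mu\circ\pi_t^{-1}$ is an element of $\CPP$; and then deriving the $1$-Lipschitz bound. I would first note that each coordinate projection $\pi_t:\CRD\to \R^d$ is continuous (in fact $1$-Lipschitz), so the pushforward $\mu_t:=\mu\circ\pi_t^{-1}$ is a well-defined Borel probability measure on $\R^d$. Its $p$-th moment is controlled by
\begin{equation*}
\int_{\R^d}|x|^p\,\mu_t(\d x)=\int_{\CRD}|\pi_t(\alpha)|^p\,\mu(\d \alpha)\le \int_{\CRD}\|\alpha\|_{\sup}^p\,\mu(\d \alpha)<\infty,
\end{equation*}
which is finite because $\mu\in\PPC$, so $\mu_t\in\PPRD$.

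Next, I would address continuity of $t\mapsto \mu_t$ in the $\mathcal{W}_p$ distance. The key idea is that $(\pi_s,\pi_t)_\#\mu$ is a coupling of $(\mu_s,\mu_t)$, hence
\begin{equation*}
\mathcal{W}_p(\mu_s,\mu_t)^p\le \int_{\CRD}|\alpha_s-\alpha_t|^p\,\mu(\d\alpha).
\end{equation*}
For each $\alpha\in\CRD$, continuity of $\alpha$ yields $|\alpha_s-\alpha_t|^p\to 0$ as $s\to t$, and the integrand is dominated by $2^p\|\alpha\|_{\sup}^p$, which is $\mu$-integrable. The dominated convergence theorem then gives the continuity.

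For the $1$-Lipschitz property, I would use the same pushforward trick at the level of couplings on $\CRD\times\CRD$. Given any $\pi\in\Pi(\mu,\nu)$, the measure $(\pi_t,\pi_t)_\#\pi$ belongs to $\Pi(\mu_t,\nu_t)$, so
\begin{equation*}
\mathcal{W}_p(\mu_t,\nu_t)^p\le \int_{\CRD\times\CRD}|\alpha_t-\beta_t|^p\,\pi(\d\alpha,\d\beta)\le \int_{\CRD\times\CRD}\|\alpha-\beta\|_{\sup}^p\,\pi(\d\alpha,\d\beta).
\end{equation*}
The right-hand side is independent of $t$, so taking the supremum over $t\in[0,T]$ and then the infimum over couplings $\pi\in\Pi(\mu,\nu)$ yields $d_p(\iota(\mu),\iota(\nu))\le \mathbb{W}_p(\mu,\nu)$, which is exactly the $1$-Lipschitz statement.

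This proof has no real obstacle; everything reduces to the fact that coordinate projections are $1$-Lipschitz on $(\CRD,\|\cdot\|_{\sup})$ and that Wasserstein distances behave well under pushforward by Lipschitz maps. The only point deserving minimal care is the order of the supremum in $t$ and the integral in the final step, which is handled as above by noting that the dominating bound $\|\alpha-\beta\|_{\sup}^p$ is $t$-independent.
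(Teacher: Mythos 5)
Your proof is correct. The paper does not spell out its own argument here but defers to \cite[Lemmas 5.1.2 \& 5.1.3]{Liu_PhD}, and your treatment (pushforward of $\mu$ under $(\pi_s,\pi_t)$ together with dominated convergence for continuity, and pushforward of an arbitrary coupling $\pi\in\Pi(\mu,\nu)$ under $(\pi_t,\pi_t)$ with the $t$-uniform dominating bound $\|\alpha-\beta\|_{\sup}^p$ for the Lipschitz estimate) is exactly the natural route one would take, with the order of $\sup_t$ and $\inf_\pi$ handled correctly.
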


For two probability measures $\mu, \nu\in\PPRD$ and for $\lambda\in[0,1]$, we define $\lambda \mu+(1-\lambda)\nu$ by 
\[\forall \,B\in\mathcal{B}\big(\RD\big), \quad \big(\lambda \mu+(1-\lambda)\nu\big)(B)\coloneqq\lambda \mu(B)+(1-\lambda)\nu(B).\]
It is easy to check that $\lambda \mu+(1-\lambda)\nu\in\mathcal{P}_p(\RD)$.

\begin{lem}\label{combcovprop}Let $\mu,\,\nu\in\PPRD$ with $p\geq1$. We define the application $\tau$ by \[\tau: \lambda \in [0,1]\mapsto \tau(\lambda) =  \lambda \mu+(1-\lambda)\nu\in\PPRD.\]
\begin{enumerate}[$(a)$]
\item The application $\tau$ is $\frac{1}{p}$-H\"older continuous with respect to the Wasserstein distance $\mathcal{W}_p$ i.e. 
		\[\forall \lambda_1, \lambda_2\in[0,1], \quad \mathcal{W}_{p}\big( \tau(\lambda_1), \tau(\lambda_2) \big)\leq |\lambda_1-\lambda_2|^{\frac{1}{p}}\mathcal{W}_p(\mu, \nu).\]  
		\item Let $\delta_0$ denote the Dirac measure at $0\in\RD$. Then
		\[\sup_{\lambda\in[0,1]}\,\mathcal{W}_p\big( \tau(\lambda), \delta_0\big)\leq \mathcal{W}_p( \mu, \delta_0)\vee \mathcal{W}_p( \nu, \delta_0).\]
	\end{enumerate}
\end{lem}

Remark that Lemma \ref{combcovprop} implies that the interpolator $i_m$ defined by \eqref{definterp2} and \eqref{definterp2bis} is well defined. The following two results describe further its properties.

\begin{lem}[Properties of the interpolator $i_m$]\label{interpolatorprop} Let $m \in \mathbb{N}^*$. 
	\begin{enumerate}[$(a)$]
		\item For every $x_{0:m}\in (\RD)^{m+1}$, $\Vert i_m(x_{0:m})\Vert_{\sup}=\sup_{0\leq k\leq m}|x_k|$. 
		\item For every $\mu_{0:m}\in \big(\PPRD\big)^{m+1}$, $\sup_{t\in[0,T]}\mathcal{W}_p\big( i_m(\mu_{0:m})_t, \delta_0\big)=\sup_{0\leq k\leq m}\mathcal{W}_p(\mu_k, \delta_0)$.
	\end{enumerate}
\end{lem}

	

\begin{lem}\label{lem:inter}
\begin{enumerate}[$(1)$]
\item For every  $ x_1, x_2, y_1, y_2\in \RD$ and for every $ \lambda\in[0,1]$, let $x_\lambda\coloneqq\lambda x_1+(1-\lambda)x_2$ and $y_\lambda\coloneqq\lambda y_1+(1-\lambda)y_2,$ 
we have $|x_\lambda-y_\lambda|\leq \max (|x_1-y_1|, |x_2-y_2|)$.
\item For every $\mu_1, \mu_2, \nu_1, \nu_2\in \mathcal{P}_p(\RD)$ and for every $\lambda\in[0,1]$, let $\mu_\lambda=\lambda \mu_1+(1-\lambda)\mu_2$ and $\nu_\lambda=\lambda \nu_1+(1-\lambda)\nu_2$, we have $\mathcal{W}_p(\mu_\lambda, \nu_\lambda)\leq \max\big(\mathcal{W}_p(\mu_1, \nu_1), \mathcal{W}_p(\mu_2, \nu_2)\big)$.
\item For every $x_{0:m}, y_{0:m}\in (\RD)^{m+1}$,  we have $\Vert i_m (x_{0:m})-i_m(y_{0:m})\Vert_{\sup}\leq  \max_{0\leq \ell\leq m}|x_\ell-y_\ell|$. 
\item For every $\mu_{0:m}, \nu_{0:m}\in \big(\mathcal{P}_p(\RD)\big)^{m+1}$, we have \[ \sup_{t\in[0,T]}\big (i_m (\mu_{0:m})_t, i_m(\nu_{0:m})_t\big) \leq  \max_{0\leq \ell\leq m}\mathcal{W}_p(\mu_\ell, \nu_\ell).\]
\end{enumerate}
\end{lem}

 The next result is a direct consequence of Assumption (I), that we shall use several times in our proof of Theorem \ref{thm:particlemethod}.  

\begin{lem}\label{lineargrowth} Under Assumption (I), the coefficient functions $b$ and $\sigma$ have a linear growth in $\alpha$ and in $(\mu_t)_{t\in[0,T]}$ in the sense that there exists a constant $C_{b,\sigma, L, T}$ s.t. for every $\,t\in[0,T]$, $\alpha\in\CRD$, $(\mu_{t})_{t\in[0,T]}\in\CPP$,
\begin{align}\label{lineargr}
& \big| b(t, \alpha, (\mu_{t})_{t\in[0,T]})\big| \vee \vertiii{\sigma(t, \alpha, (\mu_{t})_{t\in[0,T]})}  \leq C_{b,\sigma, L, T}\Big(1+\Vert \alpha\Vert_{\sup}+\sup_{t\in[0,T]}\mathcal{W}_{p}(\mu_t, \delta_0)\Big).
\end{align}
\end{lem}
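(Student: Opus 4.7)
The plan is to deduce linear growth from Lipschitz continuity by comparing $b(t,\alpha,(\mu_t)_{t\in[0,T]})$ and $\sigma(t,\alpha,(\mu_t)_{t\in[0,T]})$ to their values at a fixed reference point, namely the constant trajectory equal to $0$ in $\CRD$ and the constant measure trajectory $(\delta_0)_{t\in[0,T]}$ in $\CPP$. This is the standard way to turn a Lipschitz bound into a sublinear bound, and the only subtle point is to check that the constant we extract does not blow up and is finite.

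First I would apply Assumption (I)(2) to the two triples $(t,\alpha,(\mu_t)_{t\in[0,T]})$ and $(t,0,(\delta_0)_{t\in[0,T]})$, where $0$ denotes the null trajectory. This yields
\begin{align*}
\bigl|b(t,\alpha,(\mu_t)_{t\in[0,T]})\bigr| &\leq \bigl|b(t,0,(\delta_0)_{t\in[0,T]})\bigr| + L\Bigl(\|\alpha\|_{\sup} + d_p\bigl((\mu_t)_{t\in[0,T]},(\delta_0)_{t\in[0,T]}\bigr)\Bigr), \\
\vertiii{\sigma(t,\alpha,(\mu_t)_{t\in[0,T]})} &\leq \vertiii{\sigma(t,0,(\delta_0)_{t\in[0,T]})} + L\Bigl(\|\alpha\|_{\sup} + d_p\bigl((\mu_t)_{t\in[0,T]},(\delta_0)_{t\in[0,T]}\bigr)\Bigr),
\end{align*}
and by the definition of $d_p$ in \eqref{defdp}, the measure term is exactly $\sup_{t\in[0,T]}\mathcal{W}_p(\mu_t,\delta_0)$.

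Next I would bound the reference values. Since $b$ and $\sigma$ are continuous in $t$ (Assumption (I)(2)) and the map $t \mapsto (t,0,(\delta_0)_{t\in[0,T]})$ is continuous on the compact interval $[0,T]$, the quantities $\sup_{t\in[0,T]}|b(t,0,(\delta_0)_{t\in[0,T]})|$ and $\sup_{t\in[0,T]}\vertiii{\sigma(t,0,(\delta_0)_{t\in[0,T]})}$ are finite constants depending only on $b,\sigma,T$. Setting
\[
C_{b,\sigma,L,T} := \max\!\Bigl(L,\;\sup_{t\in[0,T]}|b(t,0,(\delta_0)_{t\in[0,T]})| \vee \vertiii{\sigma(t,0,(\delta_0)_{t\in[0,T]})}\Bigr),
\]
one obtains the desired bound \eqref{lineargr} after factoring.

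I do not expect any real obstacle: this is a direct consequence of Lipschitz continuity plus continuity in $t$ on a compact interval. The only thing to be careful about is that the distance $d_p$ is defined with the supremum over $t\in[0,T]$, so one should verify that $(\delta_0)_{t\in[0,T]}$ genuinely lies in $\CPP$ (which is immediate since it is constant) and that $d_p\bigl((\mu_t)_{t\in[0,T]},(\delta_0)_{t\in[0,T]}\bigr) = \sup_{t\in[0,T]}\mathcal{W}_p(\mu_t,\delta_0)$ as claimed.
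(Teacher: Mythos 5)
Your proof is correct and follows essentially the same route as the paper: compare to the constant reference pair $(\mathbf{0},(\delta_0)_{t\in[0,T]})$, apply the Lipschitz bound from Assumption (I), and take the constant to be the maximum of $L$ and the supremum over $t\in[0,T]$ of the reference values. The only cosmetic difference is that you spell out why this supremum is finite (continuity in $t$ on a compact interval), which the paper leaves implicit.
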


In the last part of this section, we present important technical tools from the literature.
We begin with the generalized Minkowski Inequality and the Burk\"older-Davis-Gundy Inequality. For the proof of these two inequalities, we refer to \cite[Section 7.8]{pages2018numerical} among other references. 
\begin{lem}[The Generalized Minkowski Inequality]\label{gemin}
For any (bi-measurable) process $X=(X_{t})_{t\geq0}$, for every $p\in[1,\infty)$ and for every $ T\in[0, +\infty],$
\[\left\Vert \int_{0}^{T}X_{t}\, \d t\right\Vert_{p}\leq\int_{0}^{T}\left\Vert X_{t}\right\Vert_{p}\d t.\]
\end{lem}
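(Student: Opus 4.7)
The plan is to exploit the duality characterization of the $L^p$ norm for $p \in (1,\infty)$, combined with Fubini's theorem and the classical Hölder inequality, while handling the case $p = 1$ directly. Without loss of generality I would first reduce to the case where $\int_0^T \|X_t\|_p \, \d t < \infty$, since otherwise the inequality is trivial. The bi-measurability assumption on $(X_t)_{t \ge 0}$ ensures that the map $(t,\omega) \mapsto X_t(\omega)$ is jointly measurable, which is precisely what Fubini--Tonelli requires.

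For $p = 1$, the argument is one line: Fubini--Tonelli applied to $|X_t(\omega)|$ gives
\[
\EE \Big| \int_0^T X_t \, \d t \Big| \le \EE \int_0^T |X_t| \, \d t = \int_0^T \EE |X_t| \, \d t = \int_0^T \|X_t\|_1 \, \d t.
\]
For $p \in (1,\infty)$, I would let $q = p/(p-1)$ be the conjugate exponent and invoke the dual representation of the $L^p$ norm
\[
\Big\| \int_0^T X_t \, \d t \Big\|_p = \sup_{\|Z\|_q \le 1} \EE\Big[ Z \int_0^T X_t \, \d t\Big].
\]
For each admissible $Z$, Fubini is applicable since $\EE\bigl[ |Z| \int_0^T |X_t| \, \d t \bigr] \le \|Z\|_q \int_0^T \|X_t\|_p \, \d t < \infty$ by the classical Hölder inequality, and so
\[
\EE\Big[ Z \int_0^T X_t \, \d t \Big] = \int_0^T \EE[Z X_t] \, \d t \le \int_0^T \|Z\|_q \, \|X_t\|_p \, \d t \le \int_0^T \|X_t\|_p \, \d t,
\]
where Hölder has been applied on the random-variable level for each fixed $t$. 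Taking the supremum over $Z$ on the left-hand side yields the desired bound.

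There is no real obstacle: the only delicate point is justifying the swap of expectation and time integral, which is precisely the role of the bi-measurability hypothesis combined with the finiteness reduction above. Beyond that, the proof is a routine assembly of duality and Hölder.
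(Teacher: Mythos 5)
The paper does not give its own proof of this lemma; it simply cites \cite[Section 7.8]{pages2018numerical}, so there is no in-text argument to compare against. Your proof is correct: the reduction to the case $\int_0^T \|X_t\|_p \, \d t < \infty$, the $p=1$ case via Tonelli, and the $p>1$ case via the dual characterization of the $L^p$ norm together with Fubini and H\"older all go through, and the bi-measurability hypothesis is used exactly where it should be. Two small remarks. First, you can sidestep the integrability check for the signed Fubini swap entirely by using the nonnegative form of the duality, namely $\|Y\|_p = \sup\{\EE[Z|Y|] : Z \ge 0,\ \|Z\|_q \le 1\}$, so that Tonelli applies directly. Second, the lemma as used in the paper is invoked only on scalar nonnegative integrands (the triangle inequality is applied first), but in any case the $\RD$-valued version reduces to the scalar one via the pointwise bound $|\int_0^T X_t\, \d t| \le \int_0^T |X_t|\, \d t$. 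The other standard route, and likely the one in the cited reference, is the direct H\"older argument: bound $\EE[(\int_0^T |X_t|\, \d t)^p]$ by $\EE[(\int_0^T |X_s|\, \d s)^{p-1}\int_0^T |X_t|\, \d t]$, swap by Tonelli, apply H\"older with exponents $p,q$ to each $\EE[|X_t|\,(\int_0^T |X_s|\, \d s)^{p-1}]$, and divide. That route is self-contained, while yours outsources the work to the duality theorem; both are fine.
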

\begin{lem}[Burk\"older-Davis-Gundy Inequality (continuous time)]\label{BDGin}
For every $p\in(0, +\infty)$, there exist two real constants $c_{p}^{BDG}>0$ and $C_{p}^{BDG}>0$ such that, for every continuous local martingale $(X_{t})_{t\in[0, T]}$ null at 0, denoting $(\langle X \rangle_t)_{t \in [0,T]}$ its total variation process, \dd 
\[c_{p}^{BDG}\left\Vert \sqrt{\langle X \rangle_{T}}\right\Vert_{p}\leq\left\Vert \sup_{t\in[0,T]}\left|X_{t}\right|\right\Vert_{p}\leq C_{p}^{BDG}\left\Vert \sqrt{\langle X\rangle_{T}}\right\Vert_{p}.\]
\end{lem}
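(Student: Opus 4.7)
The plan is to prove the Burkholder-Davis-Gundy inequality by first reducing, via the Dambis-Dubins-Schwarz time-change, the problem for a general continuous local martingale to a problem for Brownian motion stopped at a random time, and then combining Itô's formula, Doob's maximal inequality, and Hölder's inequality to close the comparison. Concretely, on a possibly enlarged probability space, there exists a Brownian motion $(W_s)_{s \ge 0}$ (with respect to a time-changed filtration) such that $X_t = W_{\langle X \rangle_t}$ for every $t \in [0,T]$. Setting $\tau := \langle X \rangle_T$, which is a stopping time in that filtration, one has $\sup_{t \in [0,T]} |X_t| = \sup_{s \le \tau} |W_s|$, so the statement reduces to proving that $\|\sup_{s \le \tau} |W_s|\|_p$ and $\|\sqrt{\tau}\|_p$ are comparable up to multiplicative constants depending only on $p$.

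For the upper bound when $p \ge 2$, I would apply Itô's formula to $|W_{s \wedge \tau \wedge n}|^p$, which is admissible since $x \mapsto |x|^p$ is $C^2$ on $\mathbb{R}$ for $p \ge 2$. Taking expectations kills the martingale part after localization, leaving an identity relating $\mathbb{E}[|W_{\tau \wedge n}|^p]$ to $\mathbb{E}\bigl[\int_0^{\tau \wedge n} |W_s|^{p-2}\, \d s\bigr]$. A single application of Hölder's inequality, combined with Doob's $L^p$ maximal inequality, allows one to close the estimate and conclude that $\|\sup_{s \le \tau \wedge n} |W_s|\|_p \le C_p \|\sqrt{\tau \wedge n}\|_p$; monotone convergence then transfers this to $\tau$. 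The lower bound for $p \ge 2$ follows by Brownian scaling: conditionally on $\tau$, $W_\tau$ is distributed as $\sqrt{\tau}\, Z$ with $Z \sim \mathcal{N}(0,1)$ independent of $\tau$, so $\|W_\tau\|_p = \|Z\|_p \|\sqrt{\tau}\|_p$, and the obvious inequality $|W_\tau| \le \sup_{s \le \tau} |W_s|$ delivers the required lower constant $c_p$.

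The main obstacle lies in the regime $0 < p < 2$, where the Itô route breaks down because $x \mapsto |x|^p$ is no longer $C^2$ at the origin. My strategy here is to deduce both inequalities from the case $p \ge 2$ via Lenglart's domination inequality, or equivalently through Burkholder's good-$\lambda$ principle, both of which transfer $L^p$ moment comparisons between a dominated process and its dominator across the full range $p > 0$. This is the delicate step and is essentially the content of the original Burkholder-Davis-Gundy argument. Once established for the stopped Brownian motion, the inequality transfers back to $X$ via the Dambis-Dubins-Schwarz identification combined with a standard localization argument, yielding the asserted bounds with constants $c_p^{BDG}, C_p^{BDG}$ depending only on $p$, as required.
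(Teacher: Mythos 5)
First, note that the paper does not prove this lemma at all---immediately after the statement the text says, ``For the proof of these two inequalities, we refer to \cite[Section 7.8]{pages2018numerical} among other references.'' So there is no in-paper argument to compare your sketch against; you are constructing a proof from scratch.

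Your overall outline---Dambis--Dubins--Schwarz to reduce to a stopped Brownian motion, Itô plus Doob plus Hölder for the upper bound when $p\ge 2$, and Lenglart or a good-$\lambda$ inequality to cover $0<p<2$---is the classical route, and those pieces are fine. The gap is your lower bound for $p\ge 2$. You assert that, conditionally on $\tau$, $W_\tau$ is distributed as $\sqrt{\tau}\,Z$ with $Z\sim\mathcal{N}(0,1)$ independent of $\tau$. That Brownian-scaling identity holds for deterministic times, or more generally for times independent of $W$, but it fails for stopping times of the Brownian filtration; and after DDS, $\tau=\langle X\rangle_T$ is precisely a stopping time of the time-changed filtration carrying $W$, in general heavily dependent on $W$. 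A concrete counterexample to your claim is $\tau=\inf\{s:|W_s|=1\}$, for which $W_\tau$ is uniform on $\{-1,1\}$, certainly not a scale mixture of centered Gaussians. Hence the identity $\|W_\tau\|_p=\|Z\|_p\,\|\sqrt{\tau}\|_p$ does not hold, and the trivial bound $|W_\tau|\le\sup_{s\le\tau}|W_s|$ does not then yield the desired inequality $c_p\|\sqrt{\tau}\|_p\le\|\sup_{s\le\tau}|W_s|\|_p$. To repair this you need a genuine second argument: either run Itô/Doob in the other direction---for instance, apply Itô's formula to a function $g(W_s,s)$ chosen so that $g(x,t)\le |x|^p$ while $\partial_t g+\tfrac12\partial_{xx} g\ge c_p\,t^{p/2-1}$, which after integration yields $\mathbb{E}[\tau^{p/2}]\le C_p\,\mathbb{E}\big[\sup_{s\le\tau}|W_s|^p\big]$---or establish the reverse good-$\lambda$ estimate via the strong Markov property. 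As written, the lower bound does not follow from scaling and is a genuine gap.
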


Note that under Assumption (I), $t \mapsto \sigma(t,X_{\cdot \wedge t}, \mu_{\cdot \wedge t})$ is adapted and continuous, hence progressively measurable. 
Recall also that $p \ge 2$.  A direct application of those two inequalities provides the following lemma.

\begin{lem}
\label{lem:BDG_for_sigma}
	Let $(B_t)_{t \in [0,T]}$ be an 
	$(\mathcal{F}_t)_{t \in [0,T]}$-standard Brownian motion, and $(H_t)_{t \in [0,T]}$ be an $(\mathcal{F}_t)_{t \in [0,T]}$ progressively measurable process having values in $\mathbb{M}_{d,q}(\mathbb{R})$ such that $\int_0^T \vertiii{H_t}^2 \d t <\! \infty$, $\mathbb{P}$-a.s.. Then, for all $t \in [0,T]$,
\begin{align*}
\left\Vert \sup_{s \in [0,t]} \Big| \int_0^s H_u \d B_u \Big| \right\Vert_p \le C_{d,p}^{BDG} \Big[ \int_0^t \Big\| \vertiii{H_u} \Big\|_p^2 \d u \Big]^{\tfrac12}.
\end{align*}
\end{lem}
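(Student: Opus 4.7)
The plan is to reduce the vector-valued statement to a component-wise application of the scalar Burkhölder-Davis-Gundy inequality (Lemma \ref{BDGin}), then conclude via the generalized Minkowski inequality (Lemma \ref{gemin}).

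First, I would set $M_s := \int_0^s H_u \, \d B_u$, which is a well-defined continuous $\R^d$-valued local martingale since $H$ is progressively measurable and $\int_0^T \vertiii{H_u}^2 \d u < \infty$ almost surely. Writing $M_s = (M_s^1, \dots, M_s^d)^\top$ with components $M_s^i = \sum_{j=1}^q \int_0^s (H_u)_{i,j} \, \d B_u^j$, each $M^i$ is a real-valued continuous local martingale null at $0$, and one has the elementary bound $|M_s| \le \sum_{i=1}^d |M_s^i|$.

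Next, I would compute $\langle M^i \rangle_s = \int_0^s \sum_{j=1}^q (H_u)_{i,j}^2 \, \d u = \int_0^s |(H_u)_{i,\cdot}|^2 \d u$ where $(H_u)_{i,\cdot}$ denotes the $i$-th row of $H_u$. Since the Euclidean norm of any row of a matrix is dominated by the operator norm (indeed, $|(H_u)_{i,\cdot}| = |H_u^\top e_i| \le \vertiii{H_u^\top} = \vertiii{H_u}$), one gets $\langle M^i \rangle_s \le \int_0^s \vertiii{H_u}^2 \d u$. Applying Lemma \ref{BDGin} componentwise and summing yields
\begin{align*}
\Big\| \sup_{s\in[0,t]} |M_s| \Big\|_p \le \sum_{i=1}^d \Big\| \sup_{s\in[0,t]} |M_s^i| \Big\|_p \le d\, C_p^{BDG} \Big\| \Big( \int_0^t \vertiii{H_u}^2 \d u \Big)^{1/2} \Big\|_p.
\end{align*}

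Finally, since $p \ge 2$, I would apply the generalized Minkowski inequality (Lemma \ref{gemin}) to the process $(\vertiii{H_u}^2)_{u \in [0,t]}$ with exponent $p/2 \ge 1$, giving
\begin{align*}
\Big\| \Big( \int_0^t \vertiii{H_u}^2 \d u \Big)^{1/2} \Big\|_p = \Big\| \int_0^t \vertiii{H_u}^2 \d u \Big\|_{p/2}^{1/2} \le \Big( \int_0^t \big\| \vertiii{H_u}^2 \big\|_{p/2} \, \d u \Big)^{1/2} = \Big( \int_0^t \big\| \vertiii{H_u} \big\|_p^2 \, \d u \Big)^{1/2}.
\end{align*}
Combining the two displays yields the claim with $C_{d,p}^{BDG} := d\, C_p^{BDG}$. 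There is no genuine obstacle here; the only subtle points are the reduction from the $d$-dimensional martingale to its scalar components (handled by a crude $\ell^1$ bound that absorbs a factor $d$ into the constant) and the use of $p \ge 2$ to legitimize the application of generalized Minkowski to the $L^{p/2}$ norm.
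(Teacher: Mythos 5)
Your proof is correct, and the second half (passing from $\Vert \sqrt{\int_0^t \vertiii{H_u}^2 \d u} \Vert_p$ to $\big( \int_0^t \Vert \vertiii{H_u} \Vert_p^2 \d u \big)^{1/2}$ via $\| \sqrt{U}\|_p = \|U\|_{p/2}^{1/2}$ and the generalized Minkowski inequality) is identical to the paper's. Where you diverge is in the first half: the paper applies a vector-valued BDG inequality directly to the $\R^d$-valued local martingale $\int_0^\cdot H_u \, \d B_u$, producing the constant $C_{d,p}^{BDG}$ as a black box, whereas you reduce to the scalar BDG inequality coordinate-by-coordinate, using the $\ell^1 \ge \ell^2$ bound $|M_s| \le \sum_{i=1}^d |M_s^i|$, the computation $\langle M^i \rangle_s = \int_0^s |(H_u)_{i,\cdot}|^2 \, \d u$, and the row-norm bound $|(H_u)_{i,\cdot}| \le \vertiii{H_u}$. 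Your route is slightly longer but more self-contained: it only requires the scalar BDG inequality as stated in Lemma \ref{BDGin}, it sidesteps the question of what the bracket of a vector-valued martingale means (the paper's phrasing ``total variation process'' is loose for $d>1$), and it makes the dimensional dependence of the constant explicit, $C_{d,p}^{BDG} = d\, C_p^{BDG}$. The paper's route is shorter at the cost of implicitly invoking the multidimensional version of BDG with a dimension-dependent constant.
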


 We will also make use of the following version on Gr\"onwall's lemma, whose proof is given in \cite[Lemma 7.3]{pages2018numerical}, and 
 Theorem \ref{FG} from \cite{fournier2015rate}, which provides a non-asymptotic upper bound of the convergence rate in the Wasserstein distance of the empirical measures of i.i.d. random vectors.
\begin{lem}[``\`A la Gronwall" Lemma]\label{Gronwall}
	Let $f : [0, T]\rightarrow\mathbb{R}_{+}$ be a Borel, locally bounded and non-decreasing function and let $\psi: [0, T]\rightarrow\mathbb{R}_{+}$ be a non-negative non-decreasing function satisfying 
	\[\forall t\in[0, T],\,\, f(t)\leq A\int_{0}^{t}f(s)\d s+B\left(\int_{0}^{t}f^{2}(s)\d s\right)^{\frac{1}{2}}+\psi(t),\]
	where $A, B$ are two positive real constants. Then, for any $t\in[0, T],$ \[ f(t)\leq 2e^{(2A+B^{2})t}\psi(t).\]
\end{lem}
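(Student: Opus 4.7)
The plan is to reduce this square-root-augmented Gronwall inequality to a standard (linear) Gronwall inequality through two successive applications of elementary inequalities, exploiting crucially the monotonicity of $f$.

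First, since $f$ is non-decreasing, for every $s \in [0,t]$ we have $f(s) \le f(t)$, hence $f^{2}(s) \le f(t) f(s)$. Integrating, we get
\[ \int_{0}^{t} f^{2}(s) \, \d s \le f(t) \int_{0}^{t} f(s) \, \d s, \]
so that
\[ B \Big( \int_{0}^{t} f^{2}(s) \, \d s \Big)^{1/2} \le B \sqrt{f(t)} \cdot \sqrt{\int_{0}^{t} f(s) \, \d s}. \]
Next, I would apply Young's inequality $ab \le \tfrac12 a^{2} + \tfrac12 b^{2}$ with $a = \sqrt{f(t)}$ and $b = B \sqrt{\int_{0}^{t} f(s) \, \d s}$ to obtain
\[ B \Big( \int_{0}^{t} f^{2}(s) \, \d s \Big)^{1/2} \le \frac{f(t)}{2} + \frac{B^{2}}{2} \int_{0}^{t} f(s) \, \d s. \]

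Substituting this bound into the hypothesis and absorbing the term $f(t)/2$ into the left-hand side (which is legal because $f$ is locally bounded, hence $f(t) < \infty$), I get
\[ f(t) \le (2A + B^{2}) \int_{0}^{t} f(s) \, \d s + 2\psi(t). \]
At this stage only the linear integral term remains, and the estimate is in the form where the classical Gronwall lemma (for a non-decreasing majorant $\psi$) applies directly to yield
\[ f(t) \le 2 e^{(2A + B^{2}) t} \psi(t), \]
which is exactly the stated conclusion. The invocation of the classical Gronwall lemma is standard: one iterates the inequality, or equivalently considers $F(t) = \int_{0}^{t} f(s) \, \d s$ and notices $F'(t) \le (2A+B^{2}) F(t) + 2\psi(t)$, then integrates after multiplying by $e^{-(2A+B^{2}) t}$ and uses the monotonicity of $\psi$.

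The only subtle point — and really the heart of the argument — is the opening manipulation that trades the square root for a product $\sqrt{f(t)} \cdot \sqrt{\int f}$; this step uses non-decreasingness of $f$ in an essential way, and without monotonicity the conclusion would typically require iteration and yield a weaker constant. Local boundedness of $f$ is used only to justify absorbing $f(t)/2$ to the left, and non-negativity of $f$ guarantees the square roots are well-defined throughout.
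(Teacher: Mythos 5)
Your proof is correct. The paper does not prove this lemma itself (it only cites \cite[Lemma 7.3]{pages2018numerical}), and your argument is the standard one behind that reference: use the monotonicity of $f$ to bound $\int_0^t f^2(s)\,\d s \le f(t)\int_0^t f(s)\,\d s$, apply Young's inequality and absorb $\tfrac12 f(t)$ (legitimate since $f$ is finite), then conclude with the classical Gronwall lemma with non-decreasing inhomogeneity, which yields exactly the constant $2e^{(2A+B^2)t}\psi(t)$.
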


\begin{thm}(\cite[Theorem 1]{fournier2015rate})\label{FG}
	Let $p>0$ and let $\mu\in\mathcal{P}_{q}(\mathbb{R}^{d})$ for some $q>p$. Let $n \ge 1$ and $U^{1}, \dots, U^{n},  \dots$ be i.i.d random variables with distribution $\mu$. Let $\mu_{n}$ denote the empirical measure of $\mu$ defined by
	\[\mu_{n}\coloneqq \frac{1}{n}\sum_{i=1}^{n}\delta_{U^{i}}.\]
	Then, there exists a real constant $C$ only depending on $p,d,q$ such that, for all $n\geq 1$,
	\[\mbox{\small $\mathbb{E}\Big(\mathcal{W}_{p}^{p}(\mu_{n}^{\omega}, \mu)\Big)\leq CM_{q}^{p/q}(\mu)\times\begin{cases}
			n^{-1/2}+n^{-(q-p)/q} & \:\mathrm{if}\:p>d/2\:\mathrm{and}\:q\neq2p, \\
			n^{-1/2}\log(1+n)+n^{-(q-p)/q} & \:\mathrm{if}\:p=d/2\:\mathrm{and}\:q\neq2p,\\
			n^{-p/d}+n^{-(q-p)/q} & \:\mathrm{if}\:p\in(0, d/2)\:\mathrm{and}\:q\neq d/(d-p),
	\end{cases}$}\]
	where $M_{q}(\mu)\coloneqq \int_{\mathbb{R}^{d}}\left|\xi\right|^{q}\mu(\d \xi)$.
\end{thm}

 We mention that Fournier \cite{Fournier_2023} recently obtained an explicit value of $C$ for the previous theorem.  
In particular, Theorem \ref{FG} implies that for $p\geq 2$, 
\begin{equation}\label{empir}
	\mbox{\small $\big\Vert\mathcal{W}_{p}(\mu_{n}^{\omega}, \mu)\big\Vert_{p}\leq CM_{q}^{1/q}(\mu)\times\begin{cases}
			n^{-1/2p}+n^{-(q-p)/qp} & \:\mathrm{if}\:p>d/2\:\mathrm{and}\:q\neq2p,\\
			n^{-1/2p}\big(\log(1+n)\big)^{1/p}+n^{-(q-p)/qp} &  \:\mathrm{if}\:p=d/2\:\mathrm{and}\:q\neq2p,\\
			n^{-1/d}+n^{-(q-p)/qp} & \hspace{-1.5cm} \:\mathrm{if}\:p\in(0, d/2)\:\mathrm{and}\:q\neq d/(d-p).
	\end{cases}$}
\end{equation}\dd


\subsection{The convergence rate of the interpolated Euler scheme}
\label{sec:Euler_scheme}

Let $M \in \mathbb{N}^*$. According to the definition of $b_m$ and $\sigma_m$ in \eqref{defbmsigmam}, 
the continuous Euler scheme \eqref{eq:def_continuous_2} writes, for $m \in \{0, \dots, M-1\}$ and $t \in (t_m, t_{m+1}]$, 
\begin{align*} \widetilde X^{h}_t &= \widetilde X^{h}_{t_m} + (t-t_m) \,b\,\Big(t_m, i_m\big(\widetilde X^{h}_{t_0:t_m} \big), i_m \big( \widetilde \mu^{h}_{t_0:t_m} \big) \Big)  + \sigma_m \Big(t_m, i_m\big( \widetilde X^{h}_{t_0:t_m} \big), i_m \big(\widetilde \mu^{h}_{t_0:t_m} \big) \Big) \big(B_t - B_{t_m} \big). 
	\end{align*}
In order to compare this with equation \eqref{eq:path-dependent_McKean}, we write, for all $t \in [0,T]$, $\widetilde \mu^{h}_t$ for the distribution of $\widetilde X^{h}_t$, and for all $m \in \{0,\dots, M-1\}$, we set
\begin{equation}
	\label{eq:t_bar}
	\underline t:= t_m, \qquad [\underline t] := m \qquad \hbox{if} \quad t \in [t_m, t_{m+1}). 
\end{equation}
With this at hand, the process $(\widetilde X^{h}_t)_{t \in [0,T]}$ defined by \eqref{eq:def_continuous_2} satisfies 
\begin{align}
\label{eq:rmk_tilde}
\widetilde X^{h}_t &= \widetilde X^{h}_0 + \int_0^t b \Big(\underline s, i_{[\underline s]}\big(\widetilde X^{h}_{t_0:t_{[\underline s]}} \big), i_{[\underline s]}\big(\widetilde \mu^{h}_{t_0:t_{[\underline s]}} \big) \Big)\, \d s + \int_0^t \sigma \Big(\underline s, i_{[\underline s]}\big(\widetilde X^{h}_{t_0:t_{[\underline s]}} \big), i_{[\underline s]}\big(\widetilde \mu^{h}_{t_0:t_{[\underline s]}} \big) \Big)\, \d B_s.
\end{align}

 The goal of this section is to prove a convergence result for the interpolated Euler scheme, Proposition \ref{prop:cvg_cont_Euler}, and the associated Corollary \ref{cor2}, both given below. More precisely, we start in Section \ref{subsec:prop_x_tilde} by deriving a key preliminary result, Proposition \ref{propXtilde} and proceed to the proof of Proposition \ref{prop:cvg_cont_Euler} and Corollary \ref{cor2} in Section \ref{subsec:3proofs}.  


\begin{prop}[Convergence rate of the interpolated Euler scheme]\label{prop:cvg_cont_Euler} Let $(X_t)_{t \in [0,T]}$ be the unique strong solution to \eqref{eq:path-dependent_McKean} and let $(\widetilde X_t^h)_{t \in [0,T]}$ be the process defined by \eqref{eq:def_continuous_2}. Under Assumption (II) with $ p \ge 2$ and for $M \ge 2T+1$ an integer, one has
\begin{equation}
\vertii{\sup_{t\in[0,T]}\left|X_{t}-\widetilde{X}_{t}^{h}\right|}_{p}\leq\tilde{C} \Big( h^{\gamma} + \big(h |\ln(h) \big|  \big)^{\tfrac12} \Big),
\end{equation}
where $\tilde{C} > 0$ is a constant depending on $L, p, \varepsilon_0, d, \vertii{X_{0}}_{p+\varepsilon_0}, T$ and $\gamma$.
\end{prop}

From Definition \ref{def:discretization_scheme}, we can introduce a continuous extension of $(\widetilde{X}^{h}_{t_m})_{0\leq m\leq M}$, denoted by $\widehat{X}^{h}=(\widehat{X}_{t}^{h})_{t\in[0,T]}$ and defined by $\widehat{X}^{h}\coloneqq i_M(\widetilde{X}^{h}_{t_0 : t_M})$. Then we have the following convergence. 
\begin{cor}\label{cor2}
	Under Assumption (II) with $ p \ge 2$ and for $M \ge 2T+1$ an integer, one has 
	\begin{equation}
		\vertii{\sup_{t\in[0,T]}\left|X_{t}-\widehat{X}_{t}^{h}\right|}_{p}\leq\tilde{C}\Big( h^{\gamma} + \big(h |\ln(h) \big|  \big)^{\tfrac12} \Big),
	\end{equation}
	where $\tilde{C} > 0$ is a constant depending on $L, p, \varepsilon_0, d, \vertii{X_{0}}_{p+\varepsilon_0}, T$ and $\gamma$. 
\end{cor}
\dd

\subsubsection{Properties of the  continuous extension process $(\widetilde X^h_t)_{t \ge 0}$ }\label{subsec:prop_x_tilde}

We gather here several properties of the process $(\widetilde X^h_t)_{t \ge 0}$ defined by \eqref{eq:def_continuous_2}. 


\begin{prop}\label{propXtilde} For all $M \in \mathbb{N}^*$, write $(\widetilde{X}_{t}^h)_{t\in[0,T]}$ for the process defined by \eqref{eq:def_continuous_2} with parameter $M$. Then 
\begin{enumerate}[$(a)$]
\item  Under Assumption (I) with $ p \ge 2$, for every $M\in\mathbb{N}^{*}$, we have \[\big\Vert \sup_{t\in[0,T]}\big|\widetilde{X}_t^h\big|\big\Vert_{p}\leq \Gamma \big(1+\Vert X_0\Vert_{p} \big),\] where $\Gamma$ is a constant depending on  $p,d,b,\sigma,L,T$. 
\item Under Assumption (II) with $ p \ge 2$, for $M \ge 2T+1$ an integer, there exists a constant $\kappa$ depending on $L, b, \sigma, \left\Vert X_{0}\right\Vert_{p+\varepsilon_0}, p, \varepsilon_0, d, T$ such that there holds
\[\left\Vert\; \sup_{0\leq m\leq M-1}\;\sup_{v\in[t_m, t_{m+1}]}\Big| \widetilde{X}_v^h-\widetilde X_{t_{m}}^h\Big|\;\right\Vert_p \leq \kappa \, \big(h \big|\ln (h) \big| \big)^{\frac{1}{2}}.\]
\end{enumerate}
\end{prop}

Proposition \ref{propXtilde} directly implies the following result.
\begin{cor}\label{propXtilde-bis}Under Assumptions (II) with $p \ge 2$, we have, for $M \ge 2T+1$ an integer,
\begin{align}
\left\Vert \;\Big\Vert \widetilde{X}^h-i_M\big(\widetilde{X}^h_{t_0: t_M}\big)\Big\Vert_{\sup}\;\right\Vert_p\leq 2\kappa \, \big(h\big|\ln(h)\big| \big)^{\frac{1}{2}}\: \:\text{ and }\:\: \sup_{t\in[0,T]}\mathcal{W}_p\Big( \widetilde{\mu}^h_{t}, i_{M}\big(\widetilde{\mu}^h_{t_{0}:t_{M}}\big)_t\Big)\leq 3\kappa \, \big(h\big|\ln(h)\big| \big)^{\frac{1}{2}}.\nonumber
\end{align}
\end{cor}

\begin{proof}[Proof of Corollary \ref{propXtilde-bis}] Let $M$ be a fixed integer with $M \ge 2T+1$. We drop the superscript $h$ in $\widetilde{X}^h$ for simplicity. Clearly
\begin{align}
\Big\Vert \widetilde{X}-i_M\big(\widetilde{X}_{t_0: t_M}\big)\Big\Vert_{\sup} &\leq \sup_{0\leq m\leq M-1}\sup_{t\in[t_m, t_{m+1}]} \left[\Big|\widetilde{X}_t-\widetilde{X}_{t_m}\Big|+\Big|i_M\big(\widetilde{X}_{t_0: t_M}\big)_t-\widetilde{X}_{t_m}\Big|\right]\nonumber\\
& \leq \sup_{0\leq m\leq M-1}\sup_{t\in[t_m, t_{m+1}]} \left[\Big|\widetilde{X}_t-\widetilde{X}_{t_m}\Big|+\Big|\widetilde{X}_{t_{m+1}}-\widetilde{X}_{t_m}\Big|\right]\nonumber\\
& \leq 2 \sup_{0\leq m\leq M-1}\sup_{t\in[t_m, t_{m+1}]}\Big|\widetilde{X}_t-\widetilde{X}_{t_m}\Big|.\nonumber
\end{align}
The conclusion follows by Proposition \ref{propXtilde}-(b).

	Consider now random variables $(U_{m})_{\,0\leq m\leq M}$ i.i.d. having uniform distribution on $[0,1]$ and independent of the process $(\widetilde{X}_{t})_{t\in[0,T]}$. For every $m\in\{0, ..., M-1\}$ and for every $t\in[t_m, t_{m+1}]$, 
	
	\[ \mathbbm{1}_{\left\{U_m> \frac{t-t_m}{h}\right\}}\widetilde{X}_{t_{m}}+\mathbbm{1}_{\left\{U_m\leq \frac{t-t_m}{h}\right\}}\widetilde{X}_{t_{m+1}} \sim  i_{M}(\widetilde{X}_{t_{0}:t_{M}})_{t}. \]
	This entails
	\begin{align}
		\sup_{t\in[0,T]}\mathcal{W}_{p}\Big(\widetilde{\mu}_{t}, i_{M}\big(\widetilde{\mu}_{t_{0}:t_{M}}\big)_{t}\Big) &\leq \sup_{0\leq m\leq M-1}\sup_{t\in[t_m,t_{m+1}]}\Big\Vert  \widetilde{X}_{t}-\mathbbm{1}_{\left\{U_m> \frac{t-t_m}{h}\right\}}\widetilde{X}_{t_{m}}-\mathbbm{1}_{\left\{U_m\leq \frac{t-t_m}{h}\right\}}\widetilde{X}_{t_{m+1}}\Big\Vert_p\nonumber\\
		&\leq \sup_{0\leq m\leq M-1}\sup_{t\in[t_m,t_{m+1}]} \Big( \Big\Vert  (\widetilde{X}_{t}-\widetilde{X}_{t_{m}})\mathbbm{1}_{\left\{U_m> \frac{t-t_m}{h}\right\}}\Big\Vert_p \nonumber \\
		&\hspace{3.5cm}  +\Big\Vert(\widetilde{X}_{t}-\widetilde{X}_{t_{m+1}})\mathbbm{1}_{\left\{U_m\leq \frac{t-t_m}{h}\right\}}\Big\Vert_p \Big) \nonumber\\
		&\leq 3 \sup_{0\leq m\leq M-1}\sup_{t\in[t_m,t_{m+1}]}\Big\Vert  \widetilde{X}_{t}-\widetilde{X}_{t_{m}}\Big\Vert_p\leq 3\kappa \, \big(h\big|\ln(h)\big| \big)^{\frac{1}{2}},\nonumber
	\end{align}
	where the last inequality comes from Proposition \ref{propXtilde}-(b). 
\end{proof}

\begin{proof}[Proof of Proposition \ref{propXtilde}] We drop the superscript $h$ in $\widetilde{X}^h$ and in $\widetilde{\mu}^h$ for simplicity.

\noindent$(a)$ {\bf Step 1.} In this first step, we prove that for every fixed $M\in\mathbb{N}^{*}$
\begin{equation}\label{eq:discretetilde}
\Big\Vert \;\sup_{0\leq k \leq M}\big|\,\widetilde{X}_{t_k}\,\big|\,\Big\Vert_{p}<+\infty
\end{equation}
by induction. 
First, $\Vert \widetilde{X}_{t_{0}}\Vert_{p}=\Vert X_0\Vert_{p} <+\infty$ by Assumption (I). Now assume that, for some $l \ge 0$, $\Big\Vert \sup_{0\leq k \leq l} |\widetilde{X}_{t_{k}}| \Big\Vert_{p}<+\infty$. It follows, using also Minkowski inequality, that
\begin{align*}
&\Big\Vert \sup_{0\leq k \leq l+1} \big|\widetilde{X}_{t_{k}}\big| \Big\Vert_{p} \leq \Big\Vert \sup_{0\leq k\leq l}\big|\widetilde{X}_{t_{k}}\big|\Big\Vert_{p}+\Big\Vert\Big( \big|\widetilde{X}_{t_{l+1}}\big|-\sup_{0\leq k\leq l}\big|\widetilde{X}_{t_{k}}\big|\Big)_{+}\Big\Vert_{p}  \\
& \leq \Big\Vert \sup_{0\leq k\leq l}\big|\widetilde{X}_{t_{k}}\big|\Big\Vert_{p}+\Big\Vert\Big| \;\big|\widetilde{X}_{t_{l+1}}\big|-\big|\widetilde{X}_{t_{l}}\big|\Big|\;\Big\Vert_{p} \leq \Big\Vert \sup_{0\leq k\leq l}\big|\widetilde{X}_{t_{k}}\big|\Big\Vert_{p}+\Big\Vert\widetilde{X}_{t_{l+1}}-\widetilde{X}_{t_{l}}\;\Big\Vert_{p}.
\end{align*}
Moreover, 
\begin{align*}
&\Big\Vert \widetilde{X}_{t_{l+1}}-\widetilde{X}_{t_{l}}\Big\Vert_{p}=\Big\Vert  h\,  b_l (t_l, \widetilde{X}_{t_{0} : t_{l}}, \widetilde{\mu}_{t_{0} : t_{l}} )+\sqrt{h}\, \sigma_l (t_l, \widetilde{X}_{t_{0} : t_{l}}, \widetilde{\mu}_{t_{0} : t_{l}} ) Z_{l+1}\Big\Vert_{p}\nonumber\\
&\quad \leq h \Big\Vert b \Big(t_l, i_{l}\big(\widetilde{X}_{t_{0} : t_{l}}), i_{l}\big(\widetilde{\mu}_{t_{0} : t_{l}} \big)\Big)\Big\Vert_{p} + \sqrt{h}\, \Big\Vert \;\vertiii{\sigma \Big(t_l, i_{l}\big(\widetilde{X}_{t_{0} : t_{l}}), i_{l}\big(\widetilde{\mu}_{t_{0} : t_{l}} \big)\Big)}\;\Big\Vert_{p} \, \Big\Vert Z_{l+1}\Big\Vert_{p}\nonumber\\
&\quad \leq  \Big( h + \sqrt{h} C_{p,q} \Big) \Big\Vert C_{b,\sigma, L, T}\Big(1+\big\Vert i_{l}\big(\widetilde{X}_{t_{0} : t_{l}}) \big\Vert_{\sup}+\sup_{t\in[0,T]}\mathcal{W}_{p}\big(i_{l}\big(\widetilde{\mu}_{t_{0} : t_{l}} \big)_t, \delta_0\big)\Big)\Big\Vert_{p}, \nonumber
\end{align*}
where we used Lemma \ref{lineargrowth}, and where $C_{p,q} = \Vert Z_{l+1}\Vert_{p}<+\infty$ is a constant depending only on $p$ and $q$, as $Z_{l+1} \sim \mathcal{N}(0, \mathbf{I}_q)$. Combined with Lemma \ref{interpolatorprop}, this yields
\begin{align}
\Big\Vert \widetilde{X}_{t_{l+1}}-\widetilde{X}_{t_{l}}\Big\Vert_{p} &\le \Big(h + \sqrt{h} C_{p,q} \Big)  \times \Big\Vert C_{b,\sigma, L, T}\Big(1 +\sup_{0\leq k\leq l}\big |\widetilde{X}_{t_{k}}\big |+\sup_{0\leq k\leq l}\mathcal{W}_{p}\big(\widetilde{\mu}_{t_{k}}, \delta_0\big)\Big)\Big\Vert_{p} \nonumber \\ 
& \leq  C_{b,\sigma, L, T} \Big( h + \sqrt{h} C_{p,q} \Big) \Big(1+ 2\Big\Vert\, \sup_{0\leq k\leq l}\big |\widetilde{X}_{t_{k}}\big |\,\Big\Vert_{p} \Big) <+\infty\nonumber
\end{align}
where we used the induction hypothesis to obtain the last inequality. Thus $\big\| \sup_{0\leq k \leq l+1} \big|\widetilde{X}_{t_{k}}\big| \big\|_p<+\infty$ and the claim \eqref{eq:discretetilde} follows by induction.
	
	\smallskip
\noindent	{\bf Step 2.} We prove that  $\big\Vert \sup_{t\in[0,T]} \big|\widetilde{X}_{t}\big| \big\Vert_p<+\infty$. First, from \eqref{eq:rmk_tilde},  we get for every $t\in[0,T]$, 
	\begin{align}\label{tildebound1}
		\Big\Vert \;\sup_{u\in [0,t]}|\widetilde{X}_u|\;\Big\Vert_p 
		& \leq \Vert X_0\Vert_p+\Big\Vert \int_{0}^{t}\,\Big| \,b\Big(\underline{s}, i_{[\underline{s}]}\big(\widetilde{X}_{t_0 : t_{[\underline{s}]}}\big), i_{[\underline{s}]}\big(\widetilde{\mu}_{t_0 : t_{[\underline{s}]}}\big)\,\Big) \Big| \,\d s\Big\Vert_p\nonumber\\
		& \quad\quad\quad\quad\quad +\Big\Vert\;\sup_{u\in [0,t]}\Big|\int_{0}^{u}\,\sigma\Big(\underline{s}, i_{[\underline{s}]}\big(\widetilde{X}_{t_0 : t_{[\underline{s}]}}\big), i_{[\underline{s}]}\big(\widetilde{\mu}_{t_0 : t_{[\underline{s}]}}\big)\,\Big) \d B_s\Big|\;\Big\Vert_p,
	\end{align}
	where we used Minkowski's inequality to obtain the inequality. 
	The second term in \eqref{tildebound1} can be upper bounded as follows: 
\begin{align} \label{tildebound2}
&\Big\Vert \int_{0}^{t}\,\Big| \,b\Big(\underline{s}, i_{[\underline{s}]}\big(\widetilde{X}_{t_0 : t_{[\underline{s}]}}\big), i_{[\underline{s}]}\big(\widetilde{\mu}_{t_0 : t_{[\underline{s}]}}\big)\,\Big) \Big| \,\d s\Big\Vert_p \\
&\quad \leq\int_{0}^{t}\Big\Vert C_{b,\sigma, L, T}\Big(1+\big\Vert i_{[\underline{s}]}\big(\widetilde{X}_{t_0 : t_{[\underline{s}]}}\big)\big\Vert_{\sup}+\sup_{u\in[0,T]}\mathcal{W}_{p}\big(i_{[\underline{s}]}\big(\widetilde{\mu}_{t_0 : t_{[\underline{s}]}}\big)_u, \delta_0\big)\Big)\Big\Vert_{p}\d s\nonumber\\
&\quad \le \int_{0}^{t}C_{b,\sigma, L, T}\Big(1+2 \, \Big\Vert\,\sup_{0\leq k\leq [\underline{s}]}\big|\widetilde{X}_{t_{k}}\big|\,\Big\Vert_{p}\Big)\d s \nonumber \\
&\quad \leq \, T \, C_{b,\sigma, L, T}+2\,C_{b,\sigma, L, T}\int_{0}^{t}\Big\Vert\,\sup_{0\leq k\leq [\underline{s}]}\big|\widetilde{X}_{t_{k}}\big|\,\Big\Vert_{p}\d s 
<+\infty
\end{align}
where we use Lemma \ref{gemin} and Lemma \ref{lineargrowth}  and Lemma \ref{interpolatorprop}.
	
Moreover, combining Lemmas \ref{interpolatorprop}, \ref{lineargrowth} and \ref{lem:BDG_for_sigma},  the third term in \eqref{tildebound1} can be upper bounded as follows 
\begin{align}\label{tildebound3}
&\Big\Vert\;\sup_{u\in [0,t]}\Big|\int_{0}^{u}\,\sigma\Big(\underline{s}, i_{[\underline{s}]}\big(\widetilde{X}_{t_0 : t_{[\underline{s}]}}\big), i_{[\underline{s}]}\big(\widetilde{\mu}_{t_0 : t_{[\underline{s}]}}\big)\,\Big)\d B_s\Big|\;\Big\Vert_p\nonumber\\
&\qquad\leq C_{d, p}^{BDG}\Big\{\int_{0}^{t}\Big\Vert C_{b,\sigma, L, T}\Big(1+\big\Vert i_{[\underline{s}]}\big(\widetilde{X}_{t_0 : t_{[\underline{s}]}}\big)\big\Vert_{\sup}   +\sup_{u\in[0,T]}\mathcal{W}_{p}\big(i_{[\underline{s}]}\big(\widetilde{\mu}_{t_0 : t_{[\underline{s}]}}\big)_u, \delta_0\big)\Big)\Big\Vert_{p}^{2}\d s\Big\}^{\frac{1}{2}}\;\;\nonumber\\
&\qquad\leq \sqrt{2T} \, C_{d, p}^{BDG}\, C_{b, \sigma, L, T} + 2 C_{d, p}^{BDG} \, C_{b, \sigma, L, T}\Big\{\int_{0}^{t}\Big\Vert \,\sup_{0\leq k\leq [\underline{s}]}\big|\widetilde{X}_{t_{k}}\big|\,\Big\Vert_{p}^{2}\d s\Big\}^{\frac{1}{2}}
\end{align}
which is again finite by \eqref{eq:discretetilde}. We conclude that $\Big\Vert \sup_{t\in[0,T]} \big|\widetilde{X}_{t}\big| \Big\Vert_p<+\infty$. 
	
\noindent {\bf Step 3.} We conclude the proof of (a). Using that 
\[ \Big\| \sup_{0 \le k \le [\underline{s}]} \big| \widetilde X_{t_k} \big| \Big\|_p^2 \le \Big\| \sup_{u \in [0,s]} \big| \widetilde X_u \big| \Big\|_p^2 \]
by the definition of $[\underline{s}]$, see \eqref{eq:t_bar}, the inequalities \eqref{tildebound1}, \eqref{tildebound2} and \eqref{tildebound3} in the previous step imply that for every $t\in[0,T]$
\begin{align}\label{tildebound4}
\Big\Vert \;\sup_{u\in [0,t]}|\widetilde{X}_u|\;\Big\Vert_p \nonumber & \leq \Vert X_0\Vert_p+T \, C_{b,\sigma, L, T}+2 \, C_{b,\sigma, L, T}\int_{0}^{t}\Big\Vert\,\sup_{u\in [0,s]}|\widetilde{X}_u|\,\Big\Vert_{p}\d s \nonumber\\
&\qquad +\sqrt{2T} \, C_{d, p}^{BDG} \, C_{b, \sigma, L, T} + 2 C_{d, p}^{BDG} \, C_{b, \sigma, L, T}\Big\{\int_{0}^{t}\Big\Vert \,\sup_{u\in [0,s]}|\widetilde{X}_u|\,\Big\Vert_{p}^{2}\d s\Big\}^{\frac{1}{2}}. \nonumber
\end{align}
Hence, by applying Lemma \ref{Gronwall} with $\displaystyle f(t)\coloneqq \Big\Vert \;\sup_{u\in [0,t]}|\widetilde{X}_u|\;\Big\Vert_p$, we obtain 
\begin{align}
&\Big\Vert \;\sup_{u\in [0,t]}|\widetilde{X}_u|\;\Big\Vert_p \leq C_{p,d,b,\sigma,L,T}  e^{C_{p,d,b,\sigma,L,T} t}(1+\left\Vert X_{0}\right\Vert_{p}),\nonumber
\end{align}
where the constant $C_{p,d,b,\sigma,L,T} > 0$ is defined by 
\begin{align}
C_{p,d,b,\sigma,L,T} = \big( 4C_{b,\sigma, L, T}+ 8 (C_{d, p}^{BDG} \, C_{b, \sigma, L, T})^{2}\big)\vee 2\big(1\vee C_{b,\sigma, L, T}  T+\sqrt{2T} \, C_{d, p}^{BDG} \, C_{b, \sigma, L, T}\big). \nonumber
\end{align} 	
Then 
\begin{align}
&\Big\Vert \;\sup_{u\in [0,T]}|\widetilde{X}_u|\;\Big\Vert_p \leq C_{p,d,b,\sigma,L,T} \, e^{C_{p,d,b,\sigma,L,T} T}(1+\left\Vert X_{0}\right\Vert_{p}),\nonumber
\end{align}
and we conclude by choosing $\Gamma = C_{p,d,b,\sigma,L,T} \, e^{C_{p,d,b,\sigma,L,T} T}$.

\noindent\textbf{Step 4.} Proof of $(b)$.
By hypothesis, $M$ is such that $h = \frac{T}{M} \le \tfrac12$. We have
\begin{align*}
		&\left\Vert \sup_{0\leq m\leq M-1}\;\sup_{v\in[t_m, t_{m+1}]}\Big| \widetilde{X}_v-\widetilde X_{t_{m}}\Big|\;\right\Vert_p \nonumber\\
		& \leq \Big\Vert \sup_{0\leq m\leq M-1} \sup_{v\in[t_m, t_{m+1}]} \Big[ \Big| (v-t_m) \, b_m (t_m, \widetilde{X}_{t_{0} : t_{m}}, \widetilde{\mu}_{t_{0} : t_{m}} ) \Big| \!+\! \Big| \sigma_m (t_m, \widetilde{X}_{t_{0} : t_{m}}, \widetilde{\mu}_{t_{0} : t_{m}} )  (B_v-B_{t_m})\Big| \;\Big]\Big\Vert_p\nonumber\\
		& \leq \Big\Vert \sup_{0\leq m\leq M-1} \Big[ h \Big| b_m (t_m, \widetilde{X}_{t_{0} : t_{m}}, \widetilde{\mu}_{t_{0} : t_{m}} ) \Big| + \vertiii{\sigma_m (t_m, \widetilde{X}_{t_{0} : t_{m}}, \widetilde{\mu}_{t_{0} : t_{m}} )}\, \sup_{v\in[t_m, t_{m+1}]}\Big|B_v-B_{t_m}\Big|\Big]\Big\Vert_p\nonumber \\
		& \leq h \left\Vert \sup_{0\leq m\leq M-1} \,\Big|b_m (t_m, \widetilde{X}_{t_{0} : t_{m}}, \widetilde{\mu}_{t_{0} : t_{m}} )\Big|\,\right\Vert_p\nonumber\\
		& \quad\quad\quad+\left\Vert \sup_{0\leq m\leq M-1}\,\left[\vertiii{\sigma_m (t_m, \widetilde{X}_{t_{0} : t_{m}}, \widetilde{\mu}_{t_{0} : t_{m}} )}\, \sup_{v\in[t_m, t_{m+1}]}\Big|B_v-B_{t_m}\Big|\right]\right\Vert_p
	\end{align*}
	where we used that $|t_{m+1} - t_m| = h$ and Minkowski's inequality.   We now set $p'=p+\frac{\varepsilon_0}{2}$ and $p_0 = \frac{p(2p+\varepsilon_0)}{\varepsilon_0}$ such that $\frac{1}{p}=\frac{1}{p_0}+\frac{1}{p'}$. Using Assumption (II) and  using also $(\sum_{i=1}^q a_i)^{p_0} \le q^{p_0-1} \sum_{i=1}^q a_i^{p_0}$ for all $a_1, \ldots, a_q$ real positive numbers, we get by H\"older's inequality, 
    \begin{align*}
        &\left\Vert \sup_{0\leq m\leq M-1}\,\left[\vertiii{\sigma_m (t_m, \widetilde{X}_{t_{0} : t_{m}}, \widetilde{\mu}_{t_{0} : t_{m}} )}\, \sup_{v\in[t_m, t_{m+1}]}\Big|B_v-B_{t_m}\Big|\right]\right\Vert_p \\
        &\le \Big\| \sup_{\substack{s, t \in [0,T] \\ |s - t| \le h}} \big|B_s - B_t\big| \Big\|_{p_0} \, \,  \Big\| \sup_{0 \le m \le M-1} \vertiii{\sigma_m (t_m, \widetilde{X}_{t_{0} : t_{m}}, \widetilde{\mu}_{t_{0} : t_{m}} )} \Big\|_{p'} \\
        & \le q \EE \Big[ \big(\sup_{\substack{s, t \in [0,T] \\ |s - t| \le h}} \big|Z_s - Z_t\big| \big)^{p_0} \Big]^{\frac{1}{p_0}} \, \, \Big\| \sup_{0 \le m \le M-1} \vertiii{\sigma_m (t_m, \widetilde{X}_{t_{0} : t_{m}}, \widetilde{\mu}_{t_{0} : t_{m}} )} \Big\|_{p'}, 
    \end{align*}
    where $(Z_t)_{t \in [0,T]}$ is a standard one-dimensional Brownian motion. We bound this expectation on $Z$ using moment estimates on the modulus of continuity of unidimensional Brownian motion, see \cite[Lemma 3]{Fischer_2009} and find 
     \begin{align*}
        &\left\Vert \sup_{0\leq m\leq M-1}\,\left[\vertiii{\sigma_m (t_m, \widetilde{X}_{t_{0} : t_{m}}, \widetilde{\mu}_{t_{0} : t_{m}} )}\, \sup_{v\in[t_m, t_{m+1}]}\Big|B_v-B_{t_m}\Big|\right]\right\Vert_p \\
        & \le C_{p_0,q} (h \ln(\tfrac{2T}h))^{\frac12} \, \Big\| \sup_{0 \le m \le M-1} \vertiii{\sigma_m (t_m, \widetilde{X}_{t_{0} : t_{m}}, \widetilde{\mu}_{t_{0} : t_{m}} )} \Big\|_{p'},
        \end{align*} 
        with $C_{p_0,q} = \frac{6q}{\sqrt{\ln(2)}} \, \Big(\frac{5}{\sqrt{\pi}} \Gamma \big(\tfrac{p_0 + 1}{2}\big) \Big)^{\frac1{p_0}}$.
Hence  
	\begin{align*}
		\Big\Vert \sup_{0\leq m\leq M-1}\;&\sup_{v\in[t_m, t_{m+1}]}\Big| \widetilde{X}_v-\widetilde X_{t_{m}}\Big|\;\Big\Vert_p \leq  h \left\Vert \sup_{0\leq m\leq M-1} \,\Big|b_m (t_m, \widetilde{X}_{t_{0} : t_{m}}, \widetilde{\mu}_{t_{0} : t_{m}} )\Big|\,\right\Vert_p \! \\
		&\quad + C_{p_0, q, T} \big(h \big|\ln(h)\big| \big)^{\frac{1}{2}} \left\Vert \sup_{0\leq m\leq M-1}\,\vertiii{\;\sigma_m (t_m, \widetilde{X}_{t_{0} : t_{m}}, \widetilde{\mu}_{t_{0} : t_{m}} )} \;\right\Vert_{p'}\!.
		\end{align*}  
	We now treat the two terms on the right-hand-side of this inequality. First, by definition of $b_m$, 
	\begin{align}
		&\left\Vert \sup_{0\leq m\leq M-1} \,\Big|b_m (t_m, \widetilde{X}_{t_{0} : t_{m}}, \widetilde{\mu}_{t_{0} : t_{m}} )\Big|\,\right\Vert_p \nonumber \\
		&\quad =\left\Vert \sup_{0\leq m\leq M-1} \,\Big|b \Big(t_m, i_m\big(\widetilde{X}_{t_{0} : t_{m}}\big), i_m\big(\widetilde{\mu}_{t_{0} : t_{m}}\big)\Big )\Big|\,\right\Vert_p\nonumber\\
		&\quad \leq \Big\Vert \sup_{0\leq m\leq M-1} \,C_{b,\sigma,L,T}\Big(1+\big\Vert i_m\big(\widetilde{X}_{t_{0} : t_{m}}\big)\big\Vert_{\sup}  +\sup_{u\in[0,T]}\mathcal{W}_{p}\Big(i_m\big(\widetilde{\mu}_{t_{0} : t_{m}}\big)_u, \delta_0\Big)\Big)\,\Big\Vert_p\nonumber\\
		&\quad \leq\left\Vert C_{b,\sigma,L,T}\Big(1+\sup_{0\leq k \leq M}|\widetilde{X}_{k}|+\sup_{0\leq k\leq M}\mathcal{W}_{p}\big(\widetilde{\mu}_{t_{k}}, \delta_0\big)\,\Big)\right\Vert_p\nonumber\\
		&\quad  \leq C_{b,\sigma,L,T}\Big(1+2\,\Big\Vert\sup_{0\leq k \leq M}|\widetilde{X}_{k}|\Big\Vert_p\Big)\leq C_{b,\sigma,L,T}\Big(1+2\, \Gamma(1+\Vert X_0\Vert_p)\Big)<+\infty,
	\end{align}
where we used Lemma \ref{lineargrowth} to obtain the first inequality, and Lemma \ref{interpolatorprop} to get the second one.
	Let $C_{\star}(p)\coloneqq C_{b,\sigma,L,T}\Big(1+2\,\Gamma(1+\Vert X_0\Vert_p)\Big)$, where we recall that $\Gamma$ is given by item $(a)$. By a similar computation, using that Assumption (I) is also satisfied with $p+\varepsilon_0$ under Assumption (II), we obtain 
\[ \left\Vert \sup_{0\leq m\leq M-1}\,\vertiii{\;\sigma_m (t_m, \widetilde{X}_{t_{0} : t_{m}}, \widetilde{\mu}_{t_{0} : t_{m}} )} \;\right\Vert_{p'} \leq C_{\star}(p+\varepsilon_0).\]
Then, using that for $h \in [0,\tfrac12]$, $h \le (h |\ln(h)|)^{\tfrac12}$,  
\begin{align}
&\left\Vert \sup_{0\leq m\leq M-1}\;\sup_{v\in[t_m, t_{m+1}]}\Big| \widetilde{X}_v-\widetilde X_{t_{m}}\Big|\;\right\Vert_p \leq  \big(C_\star(p) + C_{p_0, q, T} C_\star(p+\varepsilon_0)\big) \big(h \big|\ln(h)\big| \big)^{\frac{1}{2}} \nonumber
\end{align}
and we can conclude by letting $\kappa\coloneqq \big(C_\star(p) + C_{p_0, q, T} C_\star(p+\varepsilon_0)\big)$.
\end{proof}

\subsubsection{Proof of Proposition \ref{prop:cvg_cont_Euler} and Corollary \ref{cor2}} \label{subsec:3proofs}



\begin{proof}[Proof of Proposition \ref{prop:cvg_cont_Euler}] We drop the superscript $h$ in $\widetilde{X}^h$ and in $\widetilde{\mu}^h$ for simplicity.
For every $s\in[0, T]$,  we have 
	\begin{align}
		&X_{s}-\widetilde{X}_{s}=\int_{0}^{s}\left[ b(u, X_{\cdot \wedge u},\mu_{\cdot \wedge u})-b \Big(\underline{u}, i_{[\underline{u}]}\big(\widetilde{X}_{t_0 : t_{[\underline{u}]}}\big), i_{[\underline{u}]}\big(\widetilde{\mu}_{t_0 : t_{[\underline{u}]}}\big)\,\Big)\right]\d u
		\nonumber\\
		&\qquad \qquad\qquad+\int_{0}^{s}\left[  \sigma(u, X_{\cdot \wedge u}, \mu_{\cdot \wedge u})-\sigma \Big(\underline{u}, i_{[\underline{u}]}\big(\widetilde{X}_{t_0 : t_{[\underline{u}]}}\big), i_{[\underline{u}]}\big(\widetilde{\mu}_{t_0 : t_{[\underline{u}]}}\big)\,\Big)\right] \d B_{u},\nonumber
	\end{align}
	and we set  \[f(t)\coloneqq\left\Vert\sup_{s\in[0,t]}\left|X_{s}-\widetilde{X}_{s}\right|\right\Vert_{p}.\]
	
	It follows from Proposition \ref{propXtilde}-$(a)$ that $\widetilde{X}=(\widetilde{X}_{t})_{t\in[0, T]}\in L_{\mathcal{C}([0, T], \mathbb{R}^{d})}^{p}(\Omega, \mathcal{F}, \mathbb{P})$. Consequently, $\widetilde{\mu}\in\PPC$ and $\iota(\widetilde{\mu})=(\widetilde{\mu}_{t})_{t\in[0, T]}\in\CPP$ by Lemma \ref{injectionmeasure}.
	Hence, 
	\begin{align}\label{cvgtilde1}
		f(t)&=\left\Vert\sup_{s\in[0,t]}\left|X_{s}-\widetilde{X}_{s}\right|\right\Vert_{p}\nonumber\\
		&\leq \int_{0}^{t}\vertii{b(s, X_{\cdot \wedge s}, \mu_{\cdot \wedge s})-b \Big(\underline{s}, i_{[\underline{s}]}\big(\widetilde{X}_{t_0 : t_{[\underline{s}]}}\big), i_{[\underline{s}]}\big(\widetilde{\mu}_{t_0 : t_{[\underline{s}]}}\big)\,\Big)}_{p}\d s\nonumber\\
		&\quad+C_{d,p}^{BDG}\Big{[}\int_{0}^{t}\Big{\Vert}\vertiii{\sigma(s, X_{\cdot \wedge s}, \mu_{\cdot \wedge s})-\sigma \Big(\underline{u}, i_{[\underline{u}]}\big(\widetilde{X}_{t_0 : t_{[\underline{u}]}}\big), i_{[\underline{u}]}\big(\widetilde{\mu}_{t_0 : t_{[\underline{u}]}}\big)\,\Big)}\Big{\Vert}_{p}^{2}\d s\Big{]}^{\frac{1}{2}}
	\end{align}
	using Lemma \ref{lem:BDG_for_sigma}. 
	The first term in \eqref{cvgtilde1} can be controlled by
	\begin{align}\label{cvgtilde11}
		&\int_{0}^{t}\vertii{b(s, X_{\cdot \wedge s}, \mu_{\cdot \wedge s})-b \Big(\underline{s}, i_{[\underline{s}]}\big(\widetilde{X}_{t_0 : t_{[\underline{s}]}}\big), i_{[\underline{s}]}\big(\widetilde{\mu}_{t_0 : t_{[\underline{s}]}}\big)\,\Big)}_{p}\d s\nonumber\\
		&\quad\leq \int_{0}^{t}\vertii{b(s, X_{\cdot \wedge s}, \mu_{\cdot \wedge s})-b(\underline{s}, X_{\cdot \wedge {s}}, \mu_{\cdot \wedge {s}})}_{p}\d s\nonumber\\
		& \qquad +\int_{0}^{t}\vertii{b(\underline{s}, X_{\cdot \wedge {s}}, \mu_{\cdot \wedge {s}})-b \Big(\underline{s}, i_{[\underline{s}]}\big(\widetilde{X}_{t_0 : t_{[\underline{s}]}}\big), i_{[\underline{s}]}\big(\widetilde{\mu}_{t_0 : t_{[\underline{s}]}}\big)\,\Big)}_{p}\d s. 
	\end{align}
	For the first term in \eqref{cvgtilde11}, we use Assumption (II) to obtain
	\begin{align}
		\label{eq:control_b_thm}
		&\int_{0}^{t}\vertii{b(s, X_{\cdot \wedge s}, \mu_{\cdot \wedge s})-b(\underline{s}, X_{\cdot \wedge {s}}, \mu_{\cdot \wedge {s}})}_{p}\d s \nonumber  \\
		&\le \int_0^t L \Big\| 1 + \|X_{\cdot \wedge s}\|_{\sup} + \sup_{u \in [0,T]} \mathcal{W}_p(\mu_{u \wedge s}, \delta_0) \Big\|_p |s - \underline{s}|^{\gamma} \d s \nonumber  \\ 
		&\le \Big(LT + 2LT \big \Vert\sup_{t \in [0,T]} |X_t| \big \Vert_{p} \Big) h^{\gamma}\le 2 \, h^{\gamma}   \, L \, T \, \Gamma \, (1 + \|X_0\|_p),  
	\end{align}
	where we used \eqref{eq:thm_well_posed} to obtain the last inequality. 
	For the second term of \eqref{cvgtilde11}, we have
	\begin{align}\label{tildecvg2}
		&\int_{0}^{t}\vertii{b(\underline{s}, X_{\cdot \wedge s}, \mu_{\cdot \wedge s})-b \Big(\underline{s}, i_{[\underline{s}]}\big(\widetilde{X}_{t_0 : t_{[\underline{s}]}}\big), i_{[\underline{s}]}\big(\widetilde{\mu}_{t_0 : t_{[\underline{s}]}}\big)\,\Big)}_{p}\d s\nonumber\\
		&\quad \leq \int_{0}^{t}\Big\Vert \; L \,\Big[ \big\Vert X_{\cdot \wedge s}-i_{[\underline{s}]}\big(\widetilde{X}_{t_0 : t_{[\underline{s}]}}\big)\big\Vert_{\sup} +  \sup_{v\in[0,T]}\mathcal{W}_p\Big( \mu_{v\wedge s}, i_{[\underline{s}]}\big(\widetilde{\mu}_{t_0 : t_{[\underline{s}]}}\big)_v\Big)\Big]\Big\Vert_{p}\d s\nonumber\\
		&\quad \leq L \int_{0}^{t}\Big\Vert \;\big\Vert X_{\cdot \wedge s}- \widetilde{X}_{\cdot \wedge s}\big\Vert_{\sup}\;\Big\Vert_p\;\d s+L \int_{0}^{t}\Big\Vert \;\big\Vert \widetilde{X}_{\cdot \wedge s}-i_{[\underline{s}]}\big(\widetilde{X}_{t_0 : t_{[\underline{s}]}}\big)\big\Vert_{\sup}\;\Big\Vert_p\;\d s\nonumber\\
		&\qquad  +L\int_{0}^{t} \sup_{v\in[0,T]}\mathcal{W}_p( \mu_{v\wedge s}, \widetilde{\mu}_{v\wedge s}) \d s  +L\int_{0}^{t}\sup_{v\in[0,T]}\mathcal{W}_p\Big( \widetilde{\mu}_{v\wedge s}\,, i_{[\underline{s}]}\big(\widetilde{\mu}_{t_0 : t_{[\underline{s}]}}\big)_v\Big)\d s\nonumber\\
		&\quad \leq L\int_{0}^{t}f(s) \, \d s+LT \, 5\kappa \, \big( h \big|\ln(h)\big| \big)^{\tfrac12}+L \int_{0}^{t}\sup_{v\in[0,s]}\big\Vert X_v-\widetilde{X}_v\big\Vert_p \d s\nonumber\\
		&\quad \leq 2L\int_{0}^{t}f(s) \, \d s+5LT \kappa \big( h \big|\ln(h)\big| \big)^{\tfrac12},
	\end{align}
	where we used Corollary \ref{propXtilde-bis} to obtain the third inequality. 
	Now we consider the second term of \eqref{cvgtilde1}. It follows by applying Lemma \ref{lem:BDG_for_sigma} and norm inequalities that 
	\begin{align}\label{aaaa}
		& C_{d,p}^{BDG}\Big{[}\int_{0}^{t}\Big{\Vert}\vertiii{\sigma(s, X_{\cdot \wedge s}, \mu_{\cdot \wedge s})-\sigma \Big(\underline{s}, i_{[\underline{s}]}\big(\widetilde{X}_{t_0 : t_{[\underline{u}]}}\big), i_{[\underline{s}]}\big(\widetilde{\mu}_{t_0 : t_{[\underline{u}]}}\big)\,\Big)}\Big{\Vert}_{p}^{2}\d s\Big{]}^{\frac{1}{2}}\nonumber\\
		& \leq \sqrt{2} C_{d,p}^{BDG}\Big{[}\int_{0}^{t}\Big{\Vert}\vertiii{\sigma(\underline{s}, X_{\cdot \wedge s}, \mu_{\cdot \wedge s})- \sigma \Big(\underline{s}, i_{[\underline{s}]}\big(\widetilde{X}_{t_0 : t_{[\underline{s}]}}\big), i_{[\underline{s}]}\big(\widetilde{\mu}_{t_0 : t_{[\underline{s}]}}\big)\,\Big) }\Big{\Vert}_{p}^{2}\d s\Big{]}^{\frac{1}{2}} \nonumber \\
		&\quad + \sqrt{2} C_{d,p}^{BDG}\Big{[}\int_{0}^{t}\Big{\Vert}\vertiii{\sigma(s, X_{\cdot \wedge s}, \mu_{\cdot \wedge s})-\sigma(\underline{s}, X_{\cdot \wedge s}, \mu_{\cdot \wedge s})}\Big{\Vert}_{p}^{2}\d s\Big{]}^{\frac{1}{2}}.
	\end{align}
	For the first term in \eqref{aaaa}, we use the same argument as the one giving \eqref{eq:control_b_thm} to get 
	\begin{align}
		\label{eq:control_sigma}
		\Big{[}\int_{0}^{t}\Big{\Vert}\vertiii{\sigma(s, X_{\cdot \wedge s}, \mu_{\cdot \wedge s})-\sigma(\underline{s}, X_{\cdot \wedge s}, \mu_{\cdot \wedge s})}\Big{\Vert}_{p}^{2}\d s\Big{]}^{\frac{1}{2}} \nonumber \\
		\le h^{\gamma} \Big( \sqrt{2T} + 2 \sqrt{T} \Gamma_2 (1 + \|X_0\|_p) \Big)
	\end{align}
	for some constant $\Gamma_2 > 0$ depending explicitely on $\Gamma$ from \eqref{eq:thm_well_posed} and the constants of Assumptions (I) and (II).  
	The second term of \eqref{aaaa} can be upper bounded as follows
	\begin{align}
		&\sqrt{2} C_{d,p}^{BDG}\Big{[}\int_{0}^{t}\Big{\Vert}\vertiii{\sigma(\underline{s}, X_{\cdot \wedge s}, \mu_{\cdot \wedge s})- \sigma \Big(\underline{s}, i_{[\underline{s}]}\big(\widetilde{X}_{t_0 : t_{[\underline{s}]}}\big), i_{[\underline{s}]}\big(\widetilde{\mu}_{t_0 : t_{[\underline{s}]}}\big)\,\Big) }\Big{\Vert}_{p}^{2}\d s\Big{]}^{\frac{1}{2}}\nonumber\\
		&\quad \leq 2 L C_{d,p}^{BDG}\Big{[}\int_{0}^{t}\Big{\Vert}    \big\Vert X_{\cdot \wedge s}-i_{[\underline{s}]}\big(\widetilde{X}_{t_0 : t_{[\underline{s}]}}\big)\big\Vert_{\sup} \Big{\Vert}_{p}^{2}\d s\Big{]}^{\frac{1}{2}} \nonumber\\
		&\qquad\qquad\qquad  + 2 L C_{d,p}^{BDG}\Big{[}\int_{0}^{t}   \sup_{v\in[0,T]}\mathcal{W}_p\Big( \mu_{v\wedge s}, i_{[\underline{s}]}\big(\widetilde{\mu}_{t_0 : t_{[\underline{s}]}}\big)_v\Big)^{2}\d s  \Big{]}^{\frac{1}{2}} \nonumber\\
		&\quad \leq 2\sqrt{2} L C_{d,p}^{BDG} \Big{[}\int_{0}^{t}\Big\Vert \;\big\Vert X_{\cdot \wedge s}- \widetilde{X}_{\cdot \wedge s}\big\Vert_{\sup}\;\Big\Vert_p^2\;\d s\Big{]}^{\frac{1}{2}}\nonumber\\
		&\qquad +2\sqrt{2} L C_{d,p}^{BDG} \Big{[}\int_{0}^{t}\Big\Vert \;\big\Vert \widetilde{X}_{\cdot \wedge s}-i_{[\underline{s}]}\big(\widetilde{X}_{t_0 : t_{[\underline{s}]}}\big)\big\Vert_{\sup}\;\Big\Vert_p^2\;\d s\Big{]}^{\frac{1}{2}}\nonumber\\
		&\qquad  +2\sqrt{2} L C_{d,p}^{BDG}\Big[\int_{0}^{t} \sup_{v\in[0,T]}\mathcal{W}_p\big( \mu_{v\wedge s}, \widetilde{\mu}_{v\wedge s}\big)^2  \d s\Big{]}^{\frac{1}{2}}\nonumber\\
		&\qquad +2\sqrt{2} L C_{d,p}^{BDG}\Big[\int_{0}^{t} \sup_{v\in[0,T]}\mathcal{W}_p\Big( \widetilde{\mu}_{v\wedge s}, i_{[\underline{s}]}\big(\widetilde{\mu}_{t_0 : t_{[\underline{s}]}}\big)_v\Big)^{2} \d s\Big{]}^{\frac{1}{2}}\nonumber\\
		&\quad \leq 4\sqrt{2} L C_{d,p}^{BDG}\Big[\int_{0}^{t}f(s)^{2}\d s\Big]^{\frac{1}{2}}+ 2\sqrt{2} L C_{d,p}^{BDG}\sqrt{T}\, 5\kappa \big( h \big|\ln(h)\big| \big)^{\tfrac12}
	\end{align}
	by a similar reasoning as the one leading to \eqref{tildecvg2}. Bringing those inequalities together, we find
	\begin{align}
		f(t)&\leq   L \, 2 h^{\gamma}\,T \Gamma \, \big(1 + \|X_0\|_p\big)  + 2L\int_{0}^{t}f(s)\d s+5LT \kappa \big( h \big|\ln(h)\big| \big)^{\tfrac12} \nonumber \\
		& \qquad + h^{\gamma} \Big( \sqrt{2T} + 2 \sqrt{T} \, \Gamma_2 (1 + \|X_0\|_p) \Big)
		+4\sqrt{2} \, L \, C_{d,p}^{BDG}\Big[\int_{0}^{t}f(s)^{2}\d s\Big]^{\frac{1}{2}} \nonumber \\
		&\qquad + 10\sqrt{2} \,  L C_{d,p}^{BDG}\, \sqrt{T} \, \kappa \big( h \big|\ln(h)\big| \big)^{\tfrac12}.
	\end{align}
	The conclusion follows by applying Lemma \ref{Gronwall}. 
\end{proof}

\begin{proof}[Proof of Corollary \ref{cor2}]
Corollary \ref{propXtilde-bis} implies that \[\big\Vert \, \big\Vert \widehat{X}-\widetilde{X} \big\Vert_{\sup} \,\big\Vert_p \leq 2\kappa \big(h\big|\ln(h)\big| \big)^{\frac{1}{2}}. \]  Then the result is a direct application of Proposition \ref{prop:cvg_cont_Euler}.
\end{proof}


\subsection{Convergence of the particle method}\label{subsec:cvgparticlemethod}

This section is devoted to the proof of Theorem \ref{thm:particlemethod}.
Using the notations from \eqref{eq:t_bar}, we obtain the following equation equivalent to \eqref{eq:contiparticlesystem} and readily comparable to \eqref{eq:rmk_tilde} : for every $n=\llN^*$, \dd 
\begin{align}\label{eq:contiparticlesystem2}
&\widetilde{X}_{t}^{n, N,h}=\widetilde{X}_{0}^{n, N,h}+\int_{0}^t b\Big(\underline{s}, i_{[\underline{s}]}\big(\widetilde{X}_{t_0}^{n, N,h}, ..., \widetilde{X}_{t_{[\underline{s}]}}^{n, N,h}\big), i_{[\underline{s}]}\Big(\widetilde \mu^{N,h}_{t_0}, ...,\widetilde \mu^{N,h}_{t_{[\underline{s}]}}\Big)\Big)\d s\nonumber\\
&\qquad \qquad \quad+\int_0^t\sigma\Big(\underline{s}, i_{[\underline{s}]}\big(\widetilde{X}_{t_0}^{n, N,h}, ..., \widetilde{X}_{t_{[\underline{s}]}}^{n, N,h}\big), i_{[\underline{s}]}\Big(\widetilde \mu^{N,h}_{t_0}, ...,\widetilde \mu^{N,h}_{t_{[\underline{s}]}}\Big)\Big)\d B_s^n.
\end{align}

We also introduce an intermediate system, made of i.i.d. particles. 

\begin{defn}
	\label{def:intermediate_system}
	Given $N \ge 1$ $M \ge 2T+1$, $(\widetilde X_{t_0:t_M}^h)$ with the associated probability distributions $(\widetilde \mu_{t_0:t_M}^h)$ from Definition \ref{def:discretization_scheme} and $(\widetilde X^{1,N,h}_{t_0:t_M}, \dots, \widetilde X^{N,N,h}_{t_0:t_M})$ from Definition \ref{def:particle_method}, we define the continuous intermediate particle system without interaction $(Y^{1,h}_t, \dots, Y^{N,h}_t)_{t \in [0,T]}$ as follows: 
\[(Y_0^{1,h}, \dots, Y_0^{N,h})\coloneqq(\widetilde{X}_{0}^{1, N}, \dots, \widetilde{X}_{0}^{N, N}),\]
for every $m=0,..., M-1$ and  for every $t\in(t_m, t_{m+1}],$ $n \in \llbracket N \rrbracket^*$,
\[Y^{n,h}_t =  Y^{n,h}_{t_m} + (t-t_m) \,b\,\Big(t_m, i_m\big( Y^{n,h}_{t_0:t_m} \big), i_m \big( \widetilde \mu^{h}_{t_0:t_m} \big) \Big) + \sigma\Big(t_m, i_m\big(  Y^{n,h}_{t_0:t_m} \big), i_m \big(\widetilde \mu^{h}_{t_0:t_m} \big) \Big) \big(B^n_t - B^n_{t_m} \big).\]
	
\end{defn}



\begin{rem}\label{rem:law_Y}
It is clear from this definition that $(Y^{1,h},\dots,Y^{N,h})$ are i.i.d. copies of $\widetilde X^h$, and thus, for all $t \in [0,T]$ and $i\in\{1,..., N\}$, $\mathcal{L}(Y^{i,h}_t) = \widetilde \mu^h_t$.  
\end{rem}

For $k \in \{0, \dots, M\}$, recall the notation $\widetilde \mu^{N,h}_{t_k}$ and $\widetilde \mu^h_{t_k}$ from, respectively, \eqref{def:empirical_measure} and Definition \ref{eq:discretescheme}. To lighten the presentation, we set the following 
\begin{align}
&\overline{b}_{(s, \widetilde{X})}^{\,n}\coloneqq b\Big(\underline{s}, i_{[\underline{s}]}\big(\widetilde{X}_{t_0}^{n, N,h}, ..., \widetilde{X}_{t_{[\underline{s}]}}^{n, N,h}\big), i_{[\underline{s}]}\Big(\widetilde \mu^{N,h}_{t_0}, \dots, \widetilde \mu^{N,h}_{t_{[\underline s]}} \Big)  \Big) \nonumber\\
&\overline{b}_{(s, Y)}^{\,n}\coloneqq b\,\Big(\underline{s}, i_{[\underline{s}]}\big( Y^{n,h}_{t_0}, ...,  Y^{n,h}_{t_{[\underline{s}]}}\big), i_{[\underline{s}]} \big( \widetilde \mu^{h}_{t_0}, \dots, \widetilde \mu^{h}_{t_{[\underline{s}]}}\big) \Big)\nonumber\\
&\overline{\sigma}_{(s, \widetilde{X})}^{\,n}\coloneqq \sigma\Big(\underline{s}, i_{[\underline{s}]}\big(\widetilde{X}_{t_0}^{n, N,h}, ..., \widetilde{X}_{t_{[\underline{s}]}}^{n, N,h}\big), i_{[\underline{s}]}\Big(\widetilde \mu^{N,h}_{t_0}, \dots, \widetilde \mu^{N,h}_{t_{[\underline s]}} \Big)  \Big) \nonumber \\
&\overline{\sigma}_{(s, Y)}^{\,n}\coloneqq \sigma\,\Big(\underline{s}, i_{[\underline{s}]}\big( Y^{n,h}_{t_0}, ...,  Y^{n,h}_{t_{[\underline{s}]}}\big), i_{[\underline{s}]} \big( \widetilde \mu^{h}_{t_0}, \dots, \widetilde \mu^{h}_{t_{[\underline{s}]}}\big) \Big).\nonumber
\end{align}

\begin{proof}[Proof of Theorem \ref{thm:particlemethod}]
For every $n=1,..., N$ and for every $t\in[0,T]$, we have 
\[\big|\widetilde{X}_{t}^{n, N, h}-Y_t^{n,h}\big|=\Big|\,\int_0^t \big[\overline{b}_{(s, \widetilde{X})}^{\,n}-\overline{b}_{(s, Y)}^{\,n}\big]\,\d s+\int_0^t \big[\overline{\sigma}_{(s, \widetilde{X})}^{\,n}-\overline{\sigma}_{(s, Y)}^{\,n}\big]\,\d B^n_s\Big|. \]
Then, using Lemmas \ref{lem:BDG_for_sigma} and \ref{gemin},
\begin{align}\label{ineq:first}
&\Big\Vert \sup_{s\in[0,t]} \big|\widetilde{X}_{s}^{n, N,h}-Y_s^{n,h}\big|\Big\Vert_p\nonumber\\
&\quad \leq \int_0^t \big\Vert\overline{b}_{(s, \widetilde{X})}^{\,n}-\overline{b}_{(s, Y)}^{\,n}\big\Vert_p\,\d s+C_{d,p}^{BDG} \Big[ \int_0^t \Big\| \vertiii{\overline{\sigma}_{(s, \widetilde{X})}^{\,n}-\overline{\sigma}_{(s, Y)}^{\,n}} \Big\|_p^2 \d s \Big]^{\tfrac12}.
\end{align}
For the first term in \eqref{ineq:first}, Assumption (I) implies that 
\begin{align}\label{ineq1}
\big\Vert\overline{b}_{(s, \widetilde{X})}^{\,n}-\overline{b}_{(s, Y)}^{\,n}\big\Vert_p&\leq L\Big\Vert  \big\Vert i_{[\underline{s}]}\big(\widetilde{X}_{t_0}^{n, N,h}, ..., \widetilde{X}_{t_{[\underline{s}]}}^{n, N,h}\big)- i_{[\underline{s}]}\big( Y^{n,h}_{t_0}, ..., Y^{n,h}_{t_{[\underline{s}]}}\big)\big\Vert_{\sup}\Big\Vert_p\nonumber\\
&\quad +L \,  \sup_{t\in[0,T]}\mathcal{W}_p\Big( i_{[\underline{s}]}\big(\widetilde \mu^{N,h}_{t_0},\dots, \widetilde \mu^{N,h}_{t_{[\underline s]}}\big)_t\,,  i_{[\underline{s}]} \big( \widetilde \mu^{h}_{t_0}, ..., \widetilde \mu^{h}_{t_{[\underline{s}]}}\big)_t \Big)  \nonumber\\
&\leq L \, \Big\Vert \sup_{u\in[0,s]}\big| \widetilde{X}_{u}^{n, N,h}- Y^{n,h}_{u}\big|\Big\Vert_p + L \,  \Big\Vert\max_{0\leq \ell\leq  [\underline{s}]} \mathcal{W}_p\big(\widetilde \mu^{N,h}_{t_\ell}, \widetilde \mu^{h}_{t_\ell}\big) \Big\Vert_p
\end{align}
where the second inequality above follows from Lemma \ref{lem:inter}. Similarly, for the second term in \eqref{ineq:first},  we have 
\begin{align}\label{ineq2}
\Big\Vert \;\vertiii{\overline{\sigma}_{(s, \widetilde{X})}^{\,n}-\overline{\sigma}_{(s, Y)}^{\,n}} \;\Big\Vert_p^2  
&\leq 2 L^2\Big\Vert  \big\Vert i_{[\underline{s}]}\big(\widetilde{X}_{t_0}^{n, N,h}, ..., \widetilde{X}_{t_{[\underline{s}]}}^{n, N,h}\big)- i_{[\underline{s}]}\big( Y^{n,h}_{t_0}, ..., Y^{n,h}_{t_{[\underline{s}]}}\big)\big\Vert_{\sup}\Big\Vert_p^2  \nonumber\\
&\quad +2L^2  \Big\Vert \sup_{t\in[0,T]}\mathcal{W}_p\Big( i_{[\underline{s}]}\big(\widetilde \mu^{N,h}_{t_0}, \dots, \widetilde \mu^{N,h}_{t_{[\underline s]}}\big)_t\,,  i_{[\underline{s}]} \big( \widetilde \mu^{h}_{t_0}, ..., \widetilde \mu^{h}_{t_{[\underline{s}]}}\big)_t\Big) \Big\Vert^2_p  \nonumber\\
&\leq 2L^2 \, \Big\Vert \sup_{u\in[0,s]}\big| \widetilde{X}_{u}^{n, N,h}- Y^{n,h}_{u}\big|\Big\Vert_p^{ 2 } +  2L^2  \,  \Big\Vert\max_{0\leq \ell\leq  [\underline{s}]} \mathcal{W}_p\big(\widetilde \mu^{N,h}_{t_\ell}, \widetilde \mu^{h}_{t_\ell}\big) \Big\Vert_p^2.
\end{align}
Inserting \eqref{ineq1} and \eqref{ineq2} into \eqref{ineq:first}, one gets
\begin{align}\label{ineq:second}
&\Big\Vert \sup_{s\in[0,t]} \big|\widetilde{X}_{s}^{n, N,h}-Y_s^{n,h}\big|\Big\Vert_p\nonumber\\
&\leq L \int_0^t \Big\Vert  \sup_{u\in[0,s]} \big|\widetilde{X}_{u}^{n, N,h}-Y_u^{n,h}\big|\Big\Vert_p\d s +  C_{d,p, L} \Big[\int_0^t \Big\Vert  \sup_{u\in[0,s]} \big|\widetilde{X}_{u}^{n, N,h}-Y_u^{n,h}\big|\Big\Vert_p^2\d s \Big]^{\tfrac{1}{2}}+\psi(t)
\end{align}
with  $C_{d,p,L} = C_{d,p}^{BDG} \, \sqrt{2} \, L > 0$ depends only on $d, p$ and $L$,   and with
\begin{align}
\psi(t)&= L \int_{0}^t \Big\Vert\max_{0\leq \ell\leq  [\underline{s}]}  \mathcal{W}_p\big(\widetilde \mu^{N,h}_{t_{\ell}}, \widetilde \mu^{h}_{t_\ell}\big) \Big\Vert_p \d s + \displaystyle C_{d,p, L} \Big[ \int_{0}^t \Big\Vert \max_{0\leq \ell\leq  [\underline{s}]} \mathcal{W}_p\big(\widetilde \mu^{N,h}_{t_\ell}, \widetilde \mu^{h}_{t_l}\big)\Big\Vert_p^2 \d s\Big]^{\tfrac{1}{2}}.\label{eq:defpsi}
\end{align}
By letting $f(t)\coloneqq \Big\Vert \sup_{s\in[0,t]} \big|\widetilde{X}_{s}^{n, N,h}-Y_s^{n,h}\big|\Big\Vert_p$, Lemma \ref{Gronwall} implies that 
\begin{equation}\label{strongerror1}\Big\Vert \sup_{s\in[0,t]} \big|\widetilde{X}_{s}^{n, N,h}-Y_s^{n,h}\big|\Big\Vert_p\leq 2e^{(2L +C_{d,p, L}^2)\,t} \psi(t).
\end{equation}
Moreover, the empirical measure $\frac{1}{N}\sum_{n=1}^{N}\delta_{(\widetilde{X}^{n, N,h}, Y^{n,h})}$ is a coupling of the two random measures ${\widetilde \mu}^{N,h}=\frac{1}{N}\sum_{n=1}^{N}\delta_{\widetilde{X}^{n, N,h}}$ and $\nu^{N,h}=\frac{1}{N}\sum_{n=1}^{N}\delta_{Y^{n,h}}$. Thus, for $t \in [0,T]$, recalling the notation $\mathbb{W}_{p,t}$ from \eqref{eq:def_truncated_Wass}, 
\begin{align*}
\mathbb{E}\,\mathbb{W}_{p,t}^{p} (\widetilde{\mu}^{N,h}, \nu^{N,h})&=\mathbb{E}\Big[\inf_{\pi\in\Pi(\widetilde{\mu}^{N,h}, \nu^{N,h})}\int_{\mathcal{C}([0, T], \mathbb{R}^{d})\times\mathcal{C}([0, T], \mathbb{R}^{d})}\sup_{s\in[0,t]}\left|x_{s}-y_{s}\right|^{p}\pi(\d x, \d y)\Big]\\
&\leq \mathbb{E}\Big[\int_{\mathcal{C}([0, T], \mathbb{R}^{d})\times\mathcal{C}([0, T], \mathbb{R}^{d})}\sup_{s\in[0,t]}\left|x_{s}-y_{s}\right|^{p}\frac{1}{N}\sum_{n=1}^{N}\delta_{(\widetilde{X}^{\,n, N,h}, Y^{n,h})}(\d x, \d y)\Big]\\
&=\frac{1}{N}\sum_{n=1}^{N}\Big\Vert\sup_{s\in[0,t]}\left| \widetilde{X}_{s}^{n, N,h}- Y_{s}^{n,h}\right|\Big\Vert_{p}^{p} \leq\Big[2\,e^{(2L+C_{d, p, L}^{2})\,T}\psi(t)\Big]^{p}.
\end{align*}

 As $\sup_{s\in[0,t]}\mathcal{W}_{p}^{p}({\widetilde \mu}_{s}^{N,h}, \nu_{s}^{N,h})\leq \mathbb{W}_{p,t}^{p} \big(\widetilde \mu^{N,h}, \nu^{N,h} \big)$  (see \cite[Lemma 2.3]{liu2022particle}), 
we obtain
\begin{equation}\label{supw}
\Big\Vert \sup_{s\in[0,t]}\mathcal{W}_{p}(\widetilde{\mu}_{s}^{N,h}, \nu_{s}^{N,h})\Big\Vert_{p}\leq C_{d,p,L, T} \psi(t)
\end{equation}
with $C_{d,p,L, T}=2\,e^{(2L+C_{d, p, L}^{2})\,T}$. 
Setting 
\begin{equation}\label{eq:defnuN}
\nu^{N,h}_t=\frac{1}{N}\sum_{n=1}^{N}\delta_{Y^{n,h}_t},
\end{equation} 
the definition of $\psi(t)$ and \eqref{supw} entail \dd 
\begin{multline*}
\Big\Vert \max_{0\leq \ell \leq [\underline{t}]} \mathcal{W}_p\big({\widetilde \mu}_{t_\ell}^{N,h}, \widetilde{\mu}^h_{t_\ell}\big)\Big\Vert_p
\leq \Big\Vert \sup_{s\in[0,t]}\mathcal{W}_p\big({\widetilde \mu}_{s}^{N,h}, \nu_{s}^{N,h}\big)\Big\Vert_p+\Big\Vert \max_{0\leq \ell \leq [\underline{t}]} \mathcal{W}_p\big(\widetilde{\mu}^h_{t_\ell}, \nu_{t_\ell}^{N,h}\big)\Big\Vert_p\\
\leq C_{d,p,L, T} \int_{0}^t \Big\Vert\max_{0\leq \ell\leq  [\underline{s}]}  \mathcal{W}_p\big({\widetilde \mu}_{t_l\ell}^{N,h}, \widetilde \mu^h_{t_\ell}\big) \Big\Vert_p \d s  + \displaystyle  C_{d,p,L, T} \Big[ \int_{0}^t \Big\Vert \max_{0\leq \ell\leq  [\underline{s}]} \mathcal{W}_p\big({\widetilde \mu}_{t_\ell}^{N,h}, \widetilde \mu^h_{t_\ell}\big)\Big\Vert_p^2 \d s\Big]^{\tfrac{1}{2}} 
\\
\qquad + \Big\Vert \max_{0\leq \ell \leq [\underline{t}]} \mathcal{W}_p\big(\widetilde{\mu}^h_{t_\ell}, \nu_{t_\ell}^{N,h}\big)\Big\Vert_p.
\end{multline*}
Using again Lemma \ref{Gronwall}, we get
\begin{align}
\Big\Vert \max_{0\leq \ell \leq [\underline{t}]} \mathcal{W}_p\big({\widetilde \mu}_{t_\ell}^{N,h}, \widetilde{\mu}^h_{t_\ell}\big)\Big\Vert_p\leq 2\exp((2C_{d, p, L, T}+C_{d, p, L, T}^2)t)\Big\Vert \max_{0\leq \ell \leq [\underline{t}]} \mathcal{W}_p\big(\widetilde{\mu}^h_{t_\ell}, \nu_{t_\ell}^{N,h}\big)\Big\Vert_p.\nonumber
\end{align}
Hence, by taking $C'_{d,p,L,T}\coloneqq 2\exp((2C_{d, p, L, T}+C_{d, p, L, T}^2)T)$, we get
\begin{align}
\Big\Vert \max_{0\leq \ell \leq M} \mathcal{W}_p\big({\widetilde \mu}_{t_\ell}^{N,h}, \widetilde{\mu}^h_{t_\ell}\big)\Big\Vert_p &\leq C'_{d,p,L,T}\Big\Vert \max_{0\leq \ell \leq M} \mathcal{W}_p\big(\widetilde{\mu}^h_{t_\ell}, \nu_{t_\ell}^{N,h}\big)\Big\Vert_p.\nonumber
\end{align}
Under Assumption (II), Assumption (I) holds with $p+\varepsilon_0$ for some $\varepsilon_0>0$. Thus, for every $t\in[0,T]$, $\widetilde{\mu}_t^h\in\mathcal{P}_{p+\varepsilon_0}(\RD)$ by Proposition \ref{propXtilde}-(a). Hence, 
using Definition \ref{def:intermediate_system}, we obtain 
\begin{align}
&\Big\Vert \max_{0\leq \ell \leq M} \mathcal{W}_p\big(\widetilde{\mu}^h_{t_\ell}, \nu_{t_\ell}^{N,h}\big)\Big\Vert_p \nonumber \leq \sum_{\ell=0}^M\Big\Vert  \mathcal{W}_p\big(\widetilde{\mu}^h_{t_\ell}, \nu_{t_\ell}^{N,h}\big)\Big\Vert_p\nonumber\\
&\leq  C_{b, \sigma, L, T, d, p, q,\varepsilon_0} \big( 1+\Vert X_0\Vert_{p+\varepsilon_0}\big)(M+1)\nonumber\\
&\qquad \times\begin{cases}
N^{-1/2p}+N^{-\frac{\varepsilon_0}{p(p+\varepsilon_0)}} & \:\mathrm{if}\:p>d/2\:\mathrm{and}\:\varepsilon_0\neq p\\
N^{-1/2p}\big(\log(1+N)\big)^{1/p}+N^{-\frac{\varepsilon_0}{p(p+\varepsilon_0)}} & \:\mathrm{if}\:p=d/2\:\mathrm{and}\:\varepsilon_0\neq p\\
N^{-1/d}+N^{-\frac{\varepsilon_0}{p(p+\varepsilon_0)}} & \:\mathrm{if}\:p\in(0, d/2)\:\mathrm{and}\:\varepsilon_0\neq d/(d-p)-p
\end{cases},\nonumber
\end{align}
where the second inequality follows from \eqref{empir} since for every $\ell\in\{0, ..., M\}$, $\nu_{t_\ell}^{N,h}$ is, by its definition, an empirical measure of ${\widetilde{\mu}^h_{t_\ell}}$. This entails \eqref{mainresult}. Finally, \eqref{mainresult2} is obtained by combining \eqref{mainresult2},  Proposition \ref{prop:cvg_cont_Euler} and Proposition \ref{propXtilde}. \end{proof}




\smallskip

\begin{proof}[Proof of Corollary \ref{cor}]
Combining \eqref{mainresult}, \eqref{eq:defpsi} and \eqref{strongerror1}, we get 
\begin{align}
&\mathbb{W}_p\big(\mathcal{L}(\widetilde{X}^{1, N,h}), \mathcal{L}(Y^{n,h})\big)\leq \Big\Vert\sup_{s\in[0,T]} \big|\widetilde{X}_{s}^{n, N,h}-Y_s^{n,h}\big|\Big\Vert_p\leq C_{b, \sigma, L, T, d, p, q,\varepsilon_0, \Vert X_0\Vert_{p+\varepsilon_0}}(M+1)\nonumber\\
&\qquad\qquad\times\begin{cases}
N^{-1/2p}+N^{-\frac{\varepsilon_0}{p(p+\varepsilon_0)}} & \:\mathrm{if}\:p>d/2\:\mathrm{and}\:\varepsilon_0\neq p\\
N^{-1/2p}\big(\log(1+N)\big)^{1/p}+N^{-\frac{\varepsilon_0}{p(p+\varepsilon_0)}} & \:\mathrm{if}\:p=d/2\:\mathrm{and}\:\varepsilon_0\neq p\\
N^{-1/d}+N^{-\frac{\varepsilon_0}{p(p+\varepsilon_0)}} & \:\mathrm{if}\:p\in(0, d/2)\:\mathrm{and}\:\varepsilon_0\neq d/(d-p)-p
\end{cases}.\nonumber
\end{align}
Thus, \eqref{erreurforte} follows from Proposition \ref{prop:cvg_cont_Euler}, Proposition \ref{propXtilde} and the following inequality
\[\mathbb{W}_p\big(\mathcal{L}(\widetilde{X}^{1, N,h}), \mathcal{L}(X)\big)\leq \mathbb{W}_p\big(\mathcal{L}(\widetilde{X}^{1, N,h}), \mathcal{L}(Y^{n,h})\big)+\mathbb{W}_p\big(\mathcal{L}(X), \mathcal{L}(Y^{n,h})\big). \hfill\qedhere\]
\end{proof}

\bigskip

\noindent\LARGE \textbf{Appendix}

\normalsize

\begin{appendix}

\section{Propagation of chaos for the particle system}
\label{sec:chaos}


In this section, we prove Theorem \ref{thm:global_thm_chaos}. Our proof of propagation of chaos relies on the use of a synchronous coupling with i.i.d. particles sharing the marginal distributions of the solution to \eqref{eq:path-dependent_McKean}. This is of course reminiscent of the celebrated approach developed in dimension 1 by Sznitman \cite{Sznitman_1991}, although our proof is more in the line of the recent exposition of Lacker \cite{lacker2018mean}.

Let us first note that the particle system \eqref{eq:particle_system_intro} is well-defined by Theorem \ref{thm:well-posedness}, as this can be written as a path-dependent diffusion of dimension $Nd$, which of course is a particular case of application of Theorem \ref{thm:well-posedness}.
Let $(Y^i_t)_{t \ge 0}$, $1 \le i \le N$, be $N$ processes solving
\begin{align}
	\label{eq:system_Y}
	Y^i_t &= X^{i,N}_0 + \int_0^t b \big(s, Y^i_{\cdot \wedge s}, \mu_{\cdot \wedge s} \big) \, \d s + \int_0^t \sigma \big(s, Y^i_{\cdot \wedge s},  \mu_{\cdot \wedge s} \big) \, \d B^{i}_s, \qquad t \in [0,T],
\end{align}
where $(\mu_s)_{s \in [0,T]}$ in the coefficient functions are the marginal distributions of the unique solution $X$ of \eqref{eq:path-dependent_McKean} given by Theorem \ref{thm:well-posedness} and where $B^i = (B^i_t)_{t \in [0,T]}$, $1 \le i \le N$ are the same i.i.d. standard Brownian motions considered in the particle system \eqref{eq:particle_system_intro}. Recall that $X^{i,N}_0$, $1 \le i \le N$ are i.i.d. copies of $X_0$. It follows from the uniqueness in Theorem \ref{thm:well-posedness} that $Y^i = (Y^i_t)_{t \in [0,T]}$ are i.i.d. copies of $X$. 

With the help of the following lemma, we define, for all $\omega \in \Omega$,
\begin{equation}
	\label{eq:def_nu_omega}
	\nu^N(\omega) := \frac1{N} \sum_{i=1}^N \delta_{Y^i(\omega)}, 
\end{equation}
which is a random measure valued in $\calP_p(\mathcal{C}([0,T], \R^d))$ for all $p \ge 1$. 

\begin{lem}
	\label{lem:marginals}
	Let $\alpha^i = (\alpha^i_t)_{t \in [0,T]}$, $1 \le i \le N$ be elements of $\mathcal{C}([0,T], \R^d)$. Then 
	\begin{enumerate}
		\item [(1)] the empirical measure $\nu^{N,\alpha} := \frac1{N} \sum_{i=1}^N \delta_{\alpha^i} \in \calP_p(\mathcal{C}[0,T], \R^d))$ for all $p \ge 1$; 
		\item [(2)] let $\iota: \calP_p(\mathcal{C}([0,T], \R^d)) \to \mathcal{C}([0,T], \calP_p(\R^d))$ be the application defined in Lemma \ref{injectionmeasure}. Then
		\[ \iota \big( \nu^{N, \alpha} \big) = \Big( \frac1{N} \sum_{i=1}^N \delta_{\alpha^i_t} \Big)_{t \in [0,T]}. \]
	\end{enumerate}
\end{lem}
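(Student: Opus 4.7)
The lemma is essentially a bookkeeping verification based directly on the definitions, so the plan is straightforward and no genuine obstacle is expected.

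For part (1), I would first observe that $\nu^{N,\alpha}$ is a convex combination of Dirac masses at points of $\mathcal{C}([0,T], \R^d)$, hence a Borel probability measure on that space. To check the $p$-th moment condition from the definition
\[ \int_{\mathcal{C}([0,T],\R^d)} \|\xi\|_{\sup}^p \, \nu^{N,\alpha}(\d \xi) < \infty, \]
I would compute directly, using the definition of integration against a sum of Dirac masses:
\[ \int_{\mathcal{C}([0,T],\R^d)} \|\xi\|_{\sup}^p \, \nu^{N,\alpha}(\d \xi) = \frac{1}{N} \sum_{i=1}^N \|\alpha^i\|_{\sup}^p, \]
which is finite because each $\alpha^i$ is continuous on the compact interval $[0,T]$ and is therefore bounded in sup norm. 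Since this holds for every $p \ge 1$, the first claim follows.

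For part (2), I would apply the definition of $\iota$ from Lemma \ref{injectionmeasure}, namely $\iota(\mu) = (\mu \circ \pi_t^{-1})_{t \in [0,T]}$, with the coordinate map $\pi_t$ defined in \eqref{eq:pi_coordinate}. Fix $t \in [0,T]$ and a Borel set $A \subseteq \R^d$. Then, using linearity of the pushforward together with the identity $\delta_{\alpha^i} \circ \pi_t^{-1} = \delta_{\pi_t(\alpha^i)} = \delta_{\alpha^i_t}$,
\[ \iota(\nu^{N,\alpha})_t(A) = \nu^{N,\alpha}\bigl(\pi_t^{-1}(A)\bigr) = \frac{1}{N} \sum_{i=1}^N \delta_{\alpha^i}\bigl(\pi_t^{-1}(A)\bigr) = \frac{1}{N} \sum_{i=1}^N \mathbf{1}_{\{\alpha^i_t \in A\}} = \Bigl(\frac{1}{N} \sum_{i=1}^N \delta_{\alpha^i_t}\Bigr)(A). \]
Since this identity of measures holds for every $t \in [0,T]$, the second claim follows. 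The only subtlety, which I would mention briefly, is that the resulting curve $t \mapsto \frac{1}{N} \sum_{i=1}^N \delta_{\alpha^i_t}$ indeed lies in $\mathcal{C}([0,T], \calP_p(\R^d))$, but this is guaranteed by Lemma \ref{injectionmeasure} (and can also be seen directly from Lemma \ref{lem:LN}(i)).
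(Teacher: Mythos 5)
Your proof is correct and takes essentially the same approach as the paper: part (1) is the same direct computation of the $p$-th moment, and part (2) reduces to verifying equality of the pushforward measure with the empirical measure of the marginals on Borel sets, which the paper does via an explicit case analysis of $\delta_{\alpha^i}(\pi_t^{-1}(B))$ while you use the identity $\delta_{\alpha^i}\circ\pi_t^{-1}=\delta_{\alpha^i_t}$ (a minor stylistic shortcut). Your closing remark on continuity of $t\mapsto\frac1N\sum_i\delta_{\alpha^i_t}$ is a sensible extra precaution, already guaranteed by Lemma \ref{injectionmeasure}.
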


\begin{proof}
	(1) For all $p \ge 1$,
	\begin{align*}
		\int_{\mathcal{C}([0,T], \R^d)} \|x\|_{\sup}^p \, \nu^{N,\alpha}(\d x) = \int_{\mathcal{C}([0,T], \R^d)} \|x\|_{\sup}^p \, \Big( \frac1{N} \sum_{i=1}^N \delta_{\alpha^i}\Big)(\d x) = \frac1{N} \sum_{i=1}^N \|\alpha^i\|_{\sup}^p < \infty.
	\end{align*}
	\medskip 
	(2)  Recall the definition of the coordinate map $\pi_t$ from \eqref{eq:pi_coordinate}. We only need to prove that for a fixed $t \in [0,T]$, 
	\begin{align}
		\label{eq:claim_lemma_chaos}
		\nu^{N,\alpha} \circ \pi_t^{-1} = \frac1{N} \sum_{i=1}^N \delta_{\alpha^i_t}.
	\end{align} 
	Obviously both sides are probability measures on $(\R^d, \mathcal{B}(\R^d))$. Let $B \in \mathcal{B}(\R^d)$, we have
	\begin{align*}
		\nu^{N,\alpha} \circ \pi_t^{-1}(B) 
		&= \Big(\frac{1}{N} \sum_{i=1}^N \delta_{\alpha^i} \Big) \Big( \big\{ \beta \in \mathcal{C}([0,T], \R^d): \pi_t(\beta) \in B \big\} \Big) \\
		&= \frac1{N} \sum_{i=1}^N \delta_{\alpha^i}  \Big( \big\{ \beta \in \mathcal{C}([0,T], \R^d): \beta_t \in B \big\} \Big),
	\end{align*}
	where we used that $\pi_t(\beta) = \beta_t$. Notice in addition that
	\[ 
	\delta_{\alpha^i}  \Big( \big\{ \beta \in \mathcal{C}([0,T], \R^d): \beta_t \in B\big\} \Big) = 
	\left\{ 
	\begin{array}{l} 
		1 \quad \hbox{ if } \alpha^i_t \in B, \\
		0 \quad \hbox{ otherwise}. 
	\end{array}
	\right.\]
	On the other hand, $(\frac1{N} \sum_{i=1}^N \delta_{\alpha^i_t} ) (B) = \frac1{N} \sum_{i=1}^N \delta_{\alpha^i_t}(B)$
	where 
	\[ \delta_{\alpha^i_t}(B) = 
	\left\{
	\begin{array}{l}
		1 \quad \hbox{ if } \alpha^i_t \in B, \\
		0 \quad \hbox{ otherwise}. 
	\end{array} 
	\right.\]
	It follows that for all $B \in \mathcal{B}(\R^d)$, $(\nu^{N,\alpha} \circ \pi_t^{-1}) (B) = ( \frac1{N} \sum_{i=1}^N \delta_{\alpha^i_t})(B)$
	and finally that \eqref{eq:claim_lemma_chaos} holds. 
\end{proof}

With the processes $Y^i$, $1 \le i \le N$ from \eqref{eq:system_Y} at hand, we introduce a family of random distributions $(\nu^N_t)_{t \in [0,T]}$ defined by
\begin{align}
	\label{eq:def_nu} \forall\,  \omega \in \Omega, \quad  t \in [0,T], \qquad \nu^N_t(\omega) := \frac1{N} \sum_{i=1}^N \delta_{Y^i_t(\omega)}. 
\end{align}
 Lemma \ref{injectionmeasure}  guarantees that for every $\omega$, $(\nu^N_t(\omega))_{t \in [0,T]} \in \mathcal{C}([0,T], \calP_p(\R^d))$ since 
\[ (\nu^N_t(\omega))_{t \in [0,T]} = L_N \Big( \big(Y^1(\omega), \dots, Y^N(\omega) \big) \Big). \] Moreover, by Lemma \ref{lem:marginals}, for every $\omega \in \Omega$, 
\[ \Big( \nu^N_t(\omega) \Big)_{t \in [0,T]} = \iota \Big( \nu^N(\omega) \Big) \]
so that $(\nu^N_t(\omega))_{t \in [0,T]}$ can be identified with the marginal distributions of $\nu^N(\omega)$.

\begin{proof} [Proof of Theorem \ref{thm:global_thm_chaos}]

	Fix $N > 1$ and $i \in \{1, \dots, N\}$. Let $Y^i$, $1 \le i \le N$ be the solutions to  \eqref{eq:system_Y}  and $\nu^N$ the associate empirical measure defined by \eqref{eq:def_nu_omega}, as detailed above.  
	We have, for all $s \in [0,T]$,
	\begin{align*}
		X^{i,N}_s - Y^i_s &= \int_0^s \Big[ b \big(u, X^{i,N}_{\cdot \wedge u}, \mu^N_{\cdot \wedge u} \big) - b \big( u, Y^i_{\cdot \wedge u}, \mu_{\cdot \wedge u} \big) \Big] \d u  \\
		&\qquad + \int_0^s \Big[ \sigma\big(u, X^{i,N}_{\cdot \wedge u}, \mu^N_{\cdot \wedge u} \big) - \sigma(u, Y^i_{\cdot \wedge u}, \mu_{\cdot \wedge u} \big) \Big] \d B^{i}_u. 
	\end{align*}
	We set, for all $t \in [0,T]$ and using the exchangeability
	\begin{align*}
		\bar f(t) := \sup_{1 \le i \le N} \left\Vert \sup_{s \in [0,t]} \Big| X^{i,N}_s - Y^i_s \Big| \right\Vert_p = \left\Vert \sup_{s \in [0,t]} \Big| X^{1,N}_s - Y^1_s \Big| \right\Vert_p.
	\end{align*}
	  By Lemma \ref{lem:BDG_for_sigma}, for all $i \in \llbracket N \rrbracket^*$,
	\begin{align}
		\label{eq:temp_prop_chaos_1}
		\bar f (t) &= \left\Vert \sup_{s \in [0,t]} \Big| X^{i,N}_s - Y^i_s \Big| \right\Vert_p \nonumber \\
		&\le \int_0^t \Big\| b\big(s, X^{i,N}_{\cdot \wedge s}, \mu^N_{\cdot \wedge s} \big) - b\big(s, Y^i_{\cdot \wedge s}, \mu_{\cdot \wedge s}\big) \Big\|_p \d s \nonumber\\
		&\qquad + C_{d,p}^{BDG} \Big[ \int_0^t \Big\| \, \vertiii{\sigma \big(s, X^{i,N}_{\cdot \wedge s}, \mu^N_{\cdot \wedge s} \big) - \sigma \big(s, Y^i_{\cdot \wedge s}, \mu_{\cdot \wedge s}\big) } \, \Big\|_p^2 \d s \Big]^{\tfrac12}.
	\end{align}
	Recall that the marginal distributions $(\mu^N_t)_{t \in [0,T]}$ are themselves random, so that for all $u \in [0,T]$,
	\[ \Big\|  \sup_{v\in[0,T]}\mathcal{W}_p \big( (\mu^N_{v \wedge u})_{v \in [0,T]}, (\mu_{v \wedge u})_{v \in [0,T]} \big) \Big\|_p = \Big\| \sup_{v \in [0,u]} \mathcal{W}_p(\mu^N_v, \mu_v)  \Big\|_p. \] 
	Arguing as for the derivation of \eqref{ineq1} (in fact, computations are easier here since there is no use of the interpolator), and using the definition of $\bar f$, we get 
	\begin{align}
		\label{eq:control_b_prop_chaos}
		\int_0^t \Big\| b\big(s, X^{i,N}_{\cdot \wedge s}, \mu^N_{\cdot \wedge s} \big) &- b\big(s, Y^i_{\cdot \wedge s}, \mu_{\cdot \wedge s}\big) \Big\|_p \d s \nonumber \\
		& \le L \int_0^t \bar f(s) \, \d s + L \int_0^t \Big\| \sup_{v \in [0,s]}  \mathcal{W}_p \big( \mu^N_v, \mu_v \big) \Big\|_p \d s.  
	\end{align}
Adapting the derivation of \eqref{ineq2}, we also find
	\begin{align}
		\label{eq:prop_chaos_sigma}
		C_{d,p}^{BDG} & \Big[ \int_0^t  \left \Vert \vertiii{\sigma \big(s, X^{i,N}_{\cdot \wedge s}, \mu^N_{\cdot \wedge s} \big) - \sigma \big(s, Y^i_{\cdot \wedge s}, \mu_{\cdot \wedge s}\big) } \right \Vert_p^2 \d s \Big]^{\frac{1}{2}}  \nonumber \\
		&= \sqrt{2} C_{d,p}^{BDG} L \Big\{  \Big[ \int_0^t \bar f(s)^2 \d s \Big]^{\frac12} + \Big[ \int_0^t \Big\|\sup_{v \in [0,s]} \mathcal{W}_p\big(\mu^N_v, \mu_v \big) \Big\|_p^2 \d s \Big]^{\frac12} \Big\}.
	\end{align}
	By using the triangle inequality, for all $s \in [0,T]$, we get first
	\begin{align}
		\label{eq:prop_chaos_temp_1}
		\sup_{v \in [0,s]} \mathcal{W}_p^p \big( \mu^N_v, \mu_v \big) 
		&\le 2^p \Big( \sup_{v \in [0,s]} \mathcal{W}_p^p \big(\mu^N_v, \nu^N_v\big) + \sup_{v \in [0,s]} \mathcal{W}_p^p \big( \nu^N_v, \mu_v\big) \Big).
	\end{align}
	In addition, the empirical measure defined for all $t \in [0,T]$ by $\frac1{N} \sum_{i=1}^N \delta_{(X^{i,N}_t, Y^i_t)}$ is a random coupling of the empirical measures $\mu^N_t$ and $\nu^N_t$. Thus, for all $v \in [0,T]$, 
	\begin{align*}
		\mathcal{W}_p^p \big(\mu^N_v, \nu^N_v\big) &\le \int_{\R^d \times \R^d } |x - y|^p \frac1{N} \sum_{i=1}^N \delta_{(X^{i,N}_v, Y^i_v)} (\d x, \d y) = \frac1{N} \sum_{i=1}^N \Big| X^{i,N}_v - Y^i_v \Big|^p.
	\end{align*}
	Taking the supremum over $[0,s]$ and the expectation, noticing that \[ \sup_{v \in [0,s]} \sum_{i=1}^N |X^{i,N}_v - Y^i_v|^p \le \sum_{i=1}^N \sup_{v \in [0,s]} |X^{i,N}_v - Y^i_v|^p \]
	almost surely, we get 
	\begin{align*}
		\mathbb{E}\Big[ \sup_{v \in [0,s]} \mathcal{W}_p^p \big(\mu^N_v, \nu^N_v \big) \Big] &\le \frac{1}{N} \sum_{i=1}^N \mathbb{E}\Big[ \sup_{v \in [0,s]} \big| X^{i,N}_v - Y^i_v \big|^p \Big] = \bar f(s)^p.
	\end{align*}
	Taking the expectation in \eqref{eq:prop_chaos_temp_1} and using this last inequality, using also $p \ge 2$, we get 
	\begin{align}
		\label{eq:prop_chaos_Wasserstein}
		\Big\|\sup_{v \in [0,s]} \mathcal{W}_p \big( \mu^N_v, \mu_v \big) \Big\|_p \le 2 \Big(  \bar f(s) + \Big\| \sup_{v \in [0,s]} \mathcal{W}_p \big( \nu^N_v, \mu_v \big) \Big\|_p \Big). 
	\end{align}
	Bringing together \eqref{eq:temp_prop_chaos_1}, \eqref{eq:control_b_prop_chaos}, \eqref{eq:prop_chaos_sigma} and \eqref{eq:prop_chaos_Wasserstein} we deduce
	\begin{align*}
		\bar f(t) &\le 3L \int_0^t \bar f(s) \, \d s + 2 L \int_0^t \Big\| \sup_{v \in [0,s]} \mathcal{W}_p\big( \nu^N_v, \mu_v \big) \Big\|_p \d s \\
		& \qquad + \big( \sqrt{2} + 4 \big)  C_{d,p}^{BDG} L \Big[ \int_0^t \bar f(s)^2 \, \d s \Big]^{\tfrac12} \\
		&\qquad + 4 C_{d,p}^{BDG} L \Big[ \int_0^t  \Big\|\sup_{v \in [0,s]} \mathcal{W}_p \big(\nu^N_v, \mu_v\big)\Big\|_p^2 \d s \Big]^{\tfrac12}, 
	\end{align*}
	and Lemma \ref{Gronwall} then yields
	\begin{align}
		\label{eq:control_bar_f}
		\bar f(t) &\le 4L e^{\kappa_0 t} \Big\{ \int_0^t \Big\|\sup_{v \in [0,s]} \mathcal{W}_p(\nu^N_v, \mu_v)\Big\|_p \d s   +  2 C_{d,p}^{BDG} \Big[ \int_0^t \Big\| \sup_{v \in [0,s]} \mathcal{W}_p \big( \nu^N_v, \mu_v \big)\Big\|_p^2 \d s \Big]^{\frac12} \Big\} 
	\end{align}
	with $\kappa_0 := 6L + ((\sqrt{2} + 4) C_{d,p}^{BDG} L )^2 > 0$. Injecting this result into \eqref{eq:prop_chaos_Wasserstein} and using that for all $s \in [0,T]$,  $\sup_{v \in [0,s]} \mathcal{W}_p(\nu^N_v, \mu_v) \le \sup_{v \in [0,T]} \mathcal{W}_p(\nu^N_v, \mu_v)$ almost surely, we get
	\begin{align}
		\label{eq:temp_chaos_3}
		\Big\| \sup_{v \in [0,T]} \mathcal{W}_p \big( \mu^N_v, \mu_v \big) \Big\|_p &\le 2 \Big(1+ 4Le^{\kappa_0 T} (T + 2 C_{d,p}^{BDG} \sqrt{T}) \Big)\, \Big\| \sup_{v \in [0,T]} \mathcal{W}_p \big( \nu^N_v, \mu_v \big) \Big\|_p. 
	\end{align}
	By Lemma \ref{lem:marginals}, $(\nu^N_v)_{v \in [0,T]}$ can be identified with the marginal distributions of $\nu^N$. Moreover, by Lemma \ref{injectionmeasure}, the map $\iota$ is $1$-Lipschitz continuous. Thus, 
	\begin{align*}
		\Big\| \sup_{v \in [0,T]} \mathcal{W}_p\big(\nu^N_v, \mu_v\big) \Big\|_p \le \Big\| \mathbb{W}_p(\nu^N, \mu) \Big\|_p.
	\end{align*} 
	Combining this inequality with \eqref{eq:temp_chaos_3} concludes the proof of \eqref{eq:thm_chaos_1}. The limit is obtained by applying the convergence of $\|\mathcal{W}_p(\nu^N, \mu)\|_p$ with $\nu^N$ being an empirical measure of i.i.d. processes with distribution $\mu$ on the separable metric space of infinite dimension $\mathcal{C}([0,T], \R^d)$, see for instance \cite[Theorem 6.6]{Parthasarathy_1967} for convergence in probability and the corollary \cite[Corollary 2.14]{lacker2018mean} for our setting.
	
	To prove \eqref{eq:thm_chaos_2}, we simply note that for all $k \in \{1,\dots, N\}$
	\begin{align*}
		\mathbb{E} \Big[ \sup_{1 \le i \le k} \sup_{t \in [0,T]} \big| X^{i,N}_t - Y^i_t\big|^p \Big] &\le \sum_{i=1}^k \mathbb{E}\Big[ \sup_{s \in [0,T]} \big|X^{i,N}_s - Y^i_s\big|^p \Big] \le k \bar f(t)^p \le C_{p,d,T,L} k \Big\| \mathbb{W}_p(\nu^N, \mu) \Big\|_p^p,
	\end{align*} 
	where we applied \eqref{eq:control_bar_f} to obtain the last inequality, with a constant $C_{p,d,T,L} > 0$. We conclude by using again \cite[Corollary 2.14]{lacker2018mean}. 
	\qedhere 
	
\end{proof}



\section{Proofs of Section \ref{sec:Applications} and Section \ref{sec:numericalcvg}}\label{appB}

\subsection{Proofs of Section \ref{sec:Applications}}\label{proofsubsec:appli}

We provide in this appendix the proofs of the results from Section \ref{sec:Applications}.



\begin{proof}[Proof of Proposition \ref{prop:linear_interaction}]
    \textbf{Step 1.}
    We prove that $(X_t)_{t \in [0,T]}$ solves the time-dependent Ornstein-Uhlenbeck equation
    \begin{align}
        \label{eq:time-dep-OU}
        \d X_t = 2 t (m - X_t) \, \d t + \d B_t \quad \text{with}\quad X_0\sim \mathcal{N}(m,1). 
    \end{align}
The solution of \eqref{eq:linear_interaction}, given that $b$ satisfies Assumption (I), can be obtained by a fixed-point argument, see \cite[Proof of Theorem 1.1]{Bernou_2023_v1} and \cite{Djete_2022}. Let us thus define $X^{(0)}_t = Z$ for all $t$ in $[0,T]$, with $Z \sim \mathcal{N}(m,1)$, write $(\mu^{(0)}_t)_{t \in [0,T]}$ for the corresponding marginal distributions and define, for $k \ge 0$, the process $(X^{(k+1)}_t)_{t \in [0,T]}$ as the solution of
	\begin{align*}
    \left\{ 
    \begin{array}{ll}
		\d X^{(k+1)}_t = 2 \int_0^t \big[ \int_{\R^d} \big(x - X^{(k)}_t \big) \mu^{(k)}_s(\d x) \big] \d s\, \d t + \d B_t, \qquad t \in [0,T], \\ 
        X^{(k+1)}_t \sim \mu^{(k+1)}_t, \qquad t \in [0,T], \\
        X^{(k+1)}_0 \sim \mathcal{N}(m,1).
        \end{array}
        \right.
	\end{align*}
By induction, for all $k \ge 0$, $t \in [0,T]$,  
\begin{align}
	\label{eq:exp_OU}
	\mathbb{E}\Big[X_t^{(k)}\Big] = m. 
	\end{align} 
Indeed, it is obviously true for $k = 0$, and by It\^o's formula, 
\begin{align*}
	\mathbb{E}\big[ X^{(k+1)}_t \big] &= \mathbb{E}\big[ X^{(k+1)}_0 \big] + 2\int_0^t \Big[ \int_0^s \mathbb{E} \big[ X^{(k)}_u \big] \d u - s\mathbb{E} \big[ X_s^{(k)} \big] \Big] \d s = m.
\end{align*} 
By the fixed-point argument in $L^p(\mathbb{P})$ for some $p \geq 2$ (say $p = 3$), $X^{(k)} \to X$ in $L^p(\mathbb{P})$, where $X$ solves
\begin{align*}
	\d X_t = 2 \Big\{ \int_0^t \mathbb{E}\big[X_s\big] \d s - t\, X_t \Big\} \d t + \d B_t, \qquad X_0 \sim \mathcal{N}(m, 1), 
\end{align*}
and by \eqref{eq:exp_OU} and the fact that the convergence in $L^3(\mathbb{P})$ implies the convergence in $L^1(\mathbb{P})$, $\mathbb{E}[X_t] = m$ for all $t \in [0,T]$. Hence, $(X_t)_{t \in [0,T]}$ solves \eqref{eq:time-dep-OU}.

\medskip 

\noindent\textbf{Step 2.} \textit{Conclusion.} 
Using that the solution of the differential equation
\begin{align*}
    y'(t) = 2t (m - y(t)), \qquad y(0) = x
\end{align*}
is given by $y(t) = m - m e^{-t^2} + x e^{-t^2}$, we find, for instance using \cite[Section 3.3]{Knaeble_2011} that a mild solution to \eqref{eq:time-dep-OU} is given by
\begin{align*}
    X_t = (X_0 - \mu) e^{-t^2} + \mu + \int_0^t e^{-(t^2 - r^2)} \, \d B_r. 
\end{align*}
From this explicit form, the BDG inequality (see Lemma \ref{BDGin}) implies that $X_t$ belongs to $L^p(\mathbb{P})$ for all $p \ge 2$ and the time continuity is immediate. Thus, the mild solution is also a strong solution to \eqref{eq:time-dep-OU} and thus a strong solution to \eqref{eq:linear_interaction}. The uniqueness then follows from Theorem \ref{thm:well-posedness}. 
\end{proof}
\dd 


\begin{proof}[Proof of Proposition \ref{prop:ex1}]

	We write \eqref{eq:Jansen_Rit_particle} in the form of \eqref{eq:particle_system_intro}. Letting, for all $i \in \{1,\dots, N\}$, $\tilde V_i = (V_{1,i}, V_{2,i}, V_{3,i})$, the system \eqref{eq:Jansen_Rit_particle} writes,
	\begin{align*}
     \left\{
     \begin{array}{l}
		\mathrm{d} \tilde V_i(t) = b\big(t, \tilde V_i(\cdot \wedge t), \tilde \mu_{\cdot \wedge t-\Delta} \big) \mathrm{d} t + \sigma(t) \mathrm{d} \tilde W_t^i, \qquad t \in [\Delta,T], \\
        V_i(t) = V_i(0), \qquad t \in [0,\triangle],
  \end{array}
  \right.
	\end{align*}
	where, for all $t \in [0,T]$, $\sigma(t) = \mathrm{diag}\big(f_1(t), f_2(t), f_3(t)\big)$, $\tilde W^i_t = (W^{1,i}(t), W^{2,i}(t), W^{3,i}(t))$, and where  $b = (b_1,b_2,b_3)$ is defined by 
\begin{align*} 
&\forall \,j \!\in \!\{1,2,3\}, t \!\in\! [0,T], x \!=\! (x_t)_{t \in [0,T]}\! \in \!\mathcal{C}([0,T], \R), (\mu_t)_{t \in [0,T]}\! =\! (\mu^1_t, \mu^2_t, \mu^3_t)_{t \in [0,T]}\!\in\! \mathcal{C}([0,T],\! \calP_p(\R^3)),\\
&b_j(t, x, (\mu_t)_{t\in[0,T]}) := - \frac{(x_T)_j}{\tau_j} + \sum_{k=1}^3 D_{j,k} \Big( 1 + \varepsilon \int_0^t  \varphi(x_s) \, \d s \Big) \int_{\R^3} S(x_k) \mu_{T}(\d x_1, \d x_2, \d x_3)+ I_j(t). 
\end{align*} 
	Note that, in the sense of Assumption (I), the first term on the right-hand side of the definition of $b$ is clearly Lipschitz continuous. Moreover, writing, for $(\mu_t)_{t \in [0,T]} \in \mathcal{C}([0,T], \calP_p(\R^3))$, $(x_t)_{t \in [0,T]}$ in $\mathcal{C}([0,T], \R^d)$, 
	\begin{align}
		\label{eq:decompo_H_L}
		H_k(t,x) := D_{j,k} \Big( 1 +  \varepsilon \int_0^t \varphi(x_s) \d s \Big), \quad L_k(\mu) = \int_{\R^3} S(x_k) \mu_{T}(\d x_1, \d x_2, \d x_3), 
	\end{align}
	it follows from our assumptions that both $H_k$ and $L_k$ are bounded, and Lipschitz in the sense of Assumption (I) (in fact with any $q > p \ge 1$). Since the product of bounded Lipschitz functions is Lipschitz, and by assumptions on $(f_j)_{1 \le j \le 3}$, Assumption (I) is satisfied for any $q > p \ge 2$. At last, for $s, t \in [0,T]$,
 \[ \big|b_j(t,x,\mu) - b(s,x,\mu)\big| = \Big|\sum_{k=1}^3 D_{j,k} \, \varepsilon \, \Big(\int_s^t \varphi(x_u) \, \d u \Big) \, \int_{\RR^3} S(x_k) \mu_T (\d x_1, \d x_2, \d x_3) \Big| \le C |t-s|, \]
 for some constant $C$ depending only on the matrix $D$, on $\varepsilon$ and the bounds on $\varphi$ and $S$. Hence Assumption (II) also holds for any $q > p' > p \ge 2$ with $\gamma = 1$. 
 \qedhere
\end{proof}

\subsection{Proofs of Subsection \ref{subsec:prop_im}}\label{proofsubsec:prop_im}

\begin{proof}[Proof of Lemma \ref{combcovprop}] 
	
	Let $X, Y$ be such that $P_X = \mu$, $P_Y = \nu$ and consider another random variable $U$ having uniform distribution on $[0,1]$, independent of $(X, Y)$. One can easily check that for all $\lambda \in [0,1]$, the random variable $\mathbbm{1}_{\{U \le \lambda\}} X + \mathbbm{1}_{\{U > \lambda \} } Y$ follows the distribution $\tau(\lambda).$
	
\noindent $(a)$ Let $\lambda_1, \lambda_2\in[0,1]$. We assume without loss of generality that $\lambda_1<\lambda_2$. We have 
\begin{align}
&\mathcal{W}_{p}^{\,p}\big( \tau(\lambda_1), \tau(\lambda_2)\big)\leq \EE \left[ \;\Big| \mathbbm{1}_{\{U\leq \lambda_1\}}X + \mathbbm{1}_{\{U> \lambda_1\}}Y - \mathbbm{1}_{\{U\leq \lambda_2\}}X -  \mathbbm{1}_{\{U> \lambda_2\}}Y\Big|^{p}\;\right]\nonumber\\
&\quad = \EE \left[ \Big|  - \mathbbm{1}_{\{\lambda_1<U\leq \lambda_2\}}X + \mathbbm{1}_{\{\lambda_1<U\leq \lambda_2\}}Y\Big|^{p}\right] = \EE \left[ \mathbbm{1}_{\{\lambda_1<U\leq \lambda_2\}}\big|X -Y\big|^{p}\right]\nonumber\\
&\quad = (\lambda_2-\lambda_1)\EE [|X-Y|^{p}].\nonumber
\end{align}
Taking the infimum over $(X,Y) \in \Pi(\mu, \nu)$, we find $\mathcal{W}_{p}\big( \tau(\lambda_1), \tau(\lambda_2)\big)\leq(\lambda_2-\lambda_1)^{\frac{1}{p}}\mathcal{W}_p(\mu, \nu),$
where $\mathcal{W}_p(\mu, \nu)$ is finite since $\mu, \nu \in \mathcal{P}_p(\R^d)$. This concludes the proof of (a).
	
\smallskip 
	
\noindent $(b)$ For every fixed $\lambda \in[0,1]$, 
\begin{align}
&\mathcal{W}_p^{\,p}\big( \tau(\lambda), \delta_0\big)=\EE \Big[ \big| X\mathbbm{1}_{\{U\leq \lambda\}}+Y\mathbbm{1}_{\{U> \lambda\}}\big|^{p}\Big]\nonumber\\
&\quad =\EE \Big[ \big| X\mathbbm{1}_{\{U\leq \lambda\}}+Y\mathbbm{1}_{\{U> \lambda\}}\big|^{p}\mathbbm{1}_{\{U\leq \lambda\}}\Big]+\EE \Big[ \big| X\mathbbm{1}_{\{U\leq \lambda\}}+Y\mathbbm{1}_{\{U> \lambda\}}\big|^{p}\mathbbm{1}_{\{U> \lambda\}}\Big]\nonumber\\
&\quad =\EE \Big[ \big| X\big|^{p}\mathbbm{1}_{\{U\leq \lambda\}}\Big]+\EE \Big[ \big| Y\big|^{p}\mathbbm{1}_{\{U> \lambda\}}\Big]=\lambda \EE \big[ | X |^{p}\big] + (1-\lambda)\EE \big[ | Y |^{p} \big]\nonumber\\
&\quad \le \lambda \mathcal{W}_p^{p}(\mu, \delta_0)+ (1-\lambda)\mathcal{W}_p^{p}(\nu, \delta_0) \leq \mathcal{W}_p^{p}(\mu, \delta_0) \vee \mathcal{W}_p^{p}(\nu, \delta_0). \nonumber
\end{align}
Then we can conclude since the previous inequality is true for every $\lambda \in[0,1]$.
\end{proof}

\begin{proof}[Proof of Lemma \ref{interpolatorprop}]$(a)$ First, it is obvious that $\sup_{0\leq k\leq m}|x_k|\leq \big\Vert i_m (x_{0:m}) \big\Vert_{\sup}$ by the definition of $i_m$. For every $k\in\{0, ..., m-1\}$, for every $t\in[t_k, t_{k+1}]$, we have
\[\big|i_{m}(x_{0:m})_{t}\big|\leq |x_k|\vee |x_{k+1}| \leq \sup_{0\leq k\leq m}|x_k|\]
and for every $t\in[t_m, T]$, we have $\big|i_{m}(x_{0:m})_{t}\big|=x_{m}\leq \sup_{0\leq k\leq m}|x_k|$. Then we can conclude $\sup_{0\leq k\leq m}|x_k|= \big\Vert i_m (x_{0:m}) \big\Vert_{\sup}$.
	
\smallskip
\noindent $(b)$ First, it is obvious that   $\sup_{t\in[0,T]}\mathcal{W}_p\big( i_m(\mu_{0:m})_t, \delta_0\big)\geq\sup_{0\leq k\leq m}\mathcal{W}_p(\mu_k, \delta_0)$ by the definition of $i_m$.
For every $k\in\{0, ..., m-1\}$, Lemma \ref{combcovprop}-$(b)$ implies that 
\begin{align}
&\sup_{t\in[t_k, t_{k+1}]}\mathcal{W}_p\big(i_{m}(\mu_{0:m})_{t}, \delta_0\big)\leq \mathcal{W}_{p}(\mu_k, \delta_0)\vee\mathcal{W}_{p}(\mu_{k+1}, \delta_0) \leq \sup_{0\leq k\leq m}\mathcal{W}_p (\mu_k, \delta_0)\nonumber
\end{align} 
and $\sup_{t\in[t_m, T]}\mathcal{W}_p\big(i_{m}(\mu_{0:m})_{t}, \delta_0\big)= \mathcal{W}_{p}(\mu_m, \delta_0)\leq \sup_{0\leq k\leq m}\mathcal{W}_p (\mu_k, \delta_0).$
Then we can conclude that  $\sup_{t\in[0,T]}\mathcal{W}_p\big( i_m(\mu_{0:m})_t, \delta_0\big)=\sup_{0\leq k\leq m}\mathcal{W}_p(\mu_k, \delta_0)$.
\end{proof}

\begin{proof}[Proof of Lemma \ref{lem:inter}] We only need to prove $(1)$ and $(2)$, from which $(3)$ and $(4)$ can be directly obtained through Definition \ref{definterpolator}. 

\noindent $(1)$ For every $\lambda \in[0,1]$, 
\[|x_\lambda-y_\lambda|\leq\lambda|x_1-y_1|+(1-\lambda)|x_2-y_2|\leq \max\big(|x_1-y_1|, |x_2-y_2|\big). \]

\noindent $(2)$ Let $X_1\sim \mu_1, X_2\sim\mu_2, Y_1\sim \nu_1, Y_2\sim\nu_2$. Let $U\sim \mathcal{U}([0,1])$ independent of $(X_1, X_2, Y_1, Y_2)$. Then 
$\mathbbm{1}_{\{U\leq \lambda\}}X_1+\mathbbm{1}_{\{U>\lambda\}}X_2\sim \lambda \mu_1 +(1-\lambda)\mu_2$ and $\mathbbm{1}_{\{U\leq \lambda\}}Y_1+\mathbbm{1}_{\{U>\lambda\}}Y_2\sim \lambda \nu_1 +(1-\lambda)\nu_2.$
Hence, 
\begin{align}\label{ineq:wpinter}
\mathcal{W}^p_p(\mu_\lambda, \nu_\lambda)&\leq \EE \big[\big(\mathbbm{1}_{\{U\leq \lambda\}}(X_1- Y_1)+\mathbbm{1}_{\{U>\lambda\}}(X_2-Y_2)\big)^p\big]\nonumber\\
&= \EE \big[\big(\mathbbm{1}_{\{U\leq \lambda\}}(X_1- Y_1)^p\big]+\EE \big[\mathbbm{1}_{\{U>\lambda\}}(X_2-Y_2)\big)^p\big]\nonumber\\
&=\PP(U\leq \lambda)\EE \big[(X_1- Y_1)^p\big]+\PP(U> \lambda)\EE \big[(X_2-Y_2)\big)^p\big]\quad \text{(as $U\independent(X_1, X_2, Y_1, Y_2)$)}\nonumber\\
&=\lambda \EE \big[(X_1- Y_1)^p\big]+(1-\lambda)\EE \big[(X_2-Y_2)^p\big]. 
\end{align}
The inequality \eqref{ineq:wpinter} is true for every couplings $(X_1, Y_1)$ and $(X_2, Y_2)$. Taking the infimum over all the couplings of $\mu_1$ and $\nu_1$ (that is, on $\Pi(\mu_1,\nu_1)$ from \eqref{eq:def_Wasserstein}) and on $\Pi(\mu_2,\nu_2)$, the inequality \eqref{ineq:wpinter} gives
\[ \mathcal{W}^p_p(\mu_\lambda, \nu_\lambda)\leq \lambda \mathcal{W}^p_p(\mu_1, \nu_1) + (1-\lambda)\mathcal{W}^p_p(\mu_2, \nu_2). \]
We conclude by using 
\begin{align}
\mathcal{W}^p_p(\mu_\lambda, \nu_\lambda)&\leq \lambda \mathcal{W}^p_p(\mu_1, \nu_1) + (1-\lambda)\mathcal{W}^p_p(\mu_2, \nu_2)\leq \max \big(\mathcal{W}^p_p(\mu_1, \nu_1) , \mathcal{W}^p_p(\mu_2, \nu_2)\big). \hfill \qedhere\nonumber
\end{align} 
\end{proof}

\begin{proof}[Proof of Lemma \ref{lineargrowth}]
	Let $\delta_{0, [0,T]}\in\CPP$ be such that $ \delta_{0, [0,T]}(t)=\delta_0$ for all $t \in [0,T]$ and let $\textbf{0}\in\CRD$ be such that for all  $t\in[0,T],\; \textbf{0}(t)=0$. Then 
\begin{align}
&\Big|b\big(t, \alpha, (\mu_{t})_{t\in[0,T]}\big)\Big|-\Big|b\big(t, \textbf{0}, \delta_{0, [0,T]}\big)\Big|\leq \Big|b\big(t, \alpha, (\mu_{t})_{t\in[0,T]}\big)-b\big(t, \textbf{0}, \delta_{0, [0,T]}\big)\Big|\nonumber\\
&\qquad \leq L \Big(\Vert \alpha\Vert_{\sup}+\sup_{t\in[0,T]}\mathcal{W}_{p}(\mu_t, \delta_0)\Big).\nonumber
\end{align}
Consequently,  
\begin{align}
&\Big|b\big(t, \alpha, (\mu_{t})_{t\in[0,T]}\big)\Big|  \leq \Big(\sup_{t\in[0,T]}\big|b\big(t, \textbf{0}, \delta_{0, [0,T]}\big) \big|\vee L\Big) \Big(\Vert \alpha\Vert_{\sup}+\sup_{t\in[0,T]}\mathcal{W}_{p}(\mu_t, \delta_0) + 1 \Big).\nonumber
\end{align}
Similarly, we have 
\begin{align}
&\vertiii{\sigma\big(t, \alpha, (\mu_{t})_{t\in[0,T]}\big)}  \leq \Big(\sup_{t\in[0,T]}\vertiii{\sigma\big(t, \textbf{0}, \delta_{0, [0,T]}\big) }\vee L\Big) \Big(\Vert \alpha\Vert_{\sup}+\sup_{t\in[0,T]}\mathcal{W}_{p}(\mu_t, \delta_0) + 1\Big)\nonumber
\end{align}
so that one can take $\displaystyle C_{b, \sigma, L, T}\coloneqq \sup_{t\in[0,T]}\big|b\big(t, \textbf{0}, \delta_{0, [0,T]}\big) \big|\;\vee\; \sup_{t\in[0,T]}\vertiii{\sigma\big(t, \textbf{0}, \delta_{0, [0,T]}\big) }\;\vee\; L$ to conclude. 
\end{proof}

\begin{proof}[Proof of Lemma \ref{lem:BDG_for_sigma}]
	Notice first that it follows from Lemma \ref{BDGin} that $\int_{0}^{\cdot}H_{s}\d B_{s}$ is a $d$-dimensional local martingale satisfying
	\begin{equation}\label{BDGinequality}
		\left\Vert\sup_{s\in[0, t]}\left|\int_{0}^{s}H_{u} \, \d B_{u}\right|\right\Vert_{p}\leq C_{d,p}^{BDG}\left\Vert\sqrt{\int_{0}^{t}\vertiii{H_u}^{2}\d u}\right\Vert_{p}.
	\end{equation}
	Applying this, and using that when $U \ge 0$, $\big\| \sqrt{U} \big\|_p = \big\|U \big\|_{\tfrac{p}{2}}^{\tfrac12}$, we obtain 
	\begin{align*}
		\Big\| \sup_{s \in [0,t]} \Big| \int_0^s H_u \, \d B_u \Big| \Big\|_p &\le C_{d,p}^{BDG}\left\Vert \int_{0}^{t}\vertiii{H_u}^{2} \d u \right\Vert_{\tfrac{p}{2}}^{\tfrac12} \le C_{d,p}^{BDG} \Big[ \int_0^t \left\Vert \vertiii{H_u}^2 \right \Vert_{\tfrac{p}{2}} \d u  \Big]^{\tfrac12}
	\end{align*}
	where we used Minkowski's inequality (recall that $p \ge 2$) to obtain the last inequality. The proof follows by noticing that $\| |U|^2 \|_{\tfrac{p}{2}} = \|U\|_p^2$. 
\end{proof}

\end{appendix}

\noindent \textbf{Acknowledgment.} A.B. thanks Corentin Bernou for fruitful discussions regarding the neuroscience behind the model in Section \ref{subsec:neural}. 

\noindent \textbf{Funding.}  A.B. acknowledges funding from the European Union's Horizon 2020 research and innovation programme under the Marie Skłodowska-Curie Grant Agreement No 101034324 and by AAP ``Accueil EC'' of Université Claude Bernard Lyon 1. Y. L. acknowledges partial funding from Institut National des Sciences Math\'ematiques et de leurs Interactions (INSMI) through the PEPS JCJC 2023 program and funding from Universit\'e Paris-Dauphine. 

\bibliographystyle{alpha}
\bibliography{Path_depen_version_merge}       

\end{document}